\newtheorem{theorem}{Theorem}[section]
\newtheorem{proposition}[theorem]{Proposition}
\newtheorem{lemma}[theorem]{Lemma}
\newtheorem{corollary}[theorem]{Corollary}
\newtheorem{conjecture}[theorem]{Conjecture}
\theoremstyle{definition}
\newtheorem{definition}[theorem]{Definition}
\newtheorem{problem}[theorem]{Problem}
\newtheorem{question}[theorem]{Question}
\newtheorem{example}[theorem]{Example}
\theoremstyle{remark}
\newtheorem{remark}[theorem]{Remark}
\newcommand{\abs}[1]{\left\lvert#1\right\rvert}
\newcommand{\ang}[1]{\left\langle #1 \right\rangle}
\newcommand{\floor}[1]{\left\lfloor #1 \right\rfloor}
\newcommand{\paren}[1]{\left( #1 \right)}
\newcommand{\set}[1]{\left\{ #1 \right\}}
\newcommand{\CC}{\mathbb{C}}
\newcommand{\FF}{\mathbb{F}}
\newcommand{\RR}{\mathbb{R}}
\newcommand{\PP}{\mathbb{P}}
\newcommand{\NN}{\mathbb{N}}
\newcommand{\ZZ}{\mathbb{Z}}
\newcommand{\QQ}{\mathbb{Q}}
\newcommand{\legendre}[2]{\left(\frac{#1}{#2}\right)}
\title{Uniacute Spherical Codes}
\author[Lepsveridze]{Saba Lepsveridze}
\author[Saatashvili]{Aleksandre Saatashvili}
\author[Zhao]{Yufei Zhao}
\thanks{Lepsveridze and Saatashvili were supported in part by MIT UROP. 
Zhao was supported in part by NSF CAREER award DMS-2044606.}
\address{Lepsveridze, Zhao: Massachusetts Institute of Technology, Cambridge, MA, USA}
\email{\{sabal,yufeiz\}@mit.edu}
\address{Saatashvili: Carnegie Mellon University, Pittsburgh, PA, USA}
\email{asaatash@andrew.cmu.edu}
\begin{document} 

\begin{abstract}
A spherical $L$-code, where $L \subseteq [-1,\infty)$,
consists of unit vectors in $\mathbb{R}^d$ whose pairwise inner products are contained in $L$. 
Determining the maximum cardinality $N_L(d)$ of an $L$-code in $\mathbb{R}^d$ is a fundamental question in discrete geometry and has been extensively investigated for various choices of $L$.
Our understanding in high dimensions is generally quite poor. 
Equiangular lines, corresponding to $L = \{-\alpha, \alpha\}$, is a rare and notable solved case.

Bukh studied an extension of equiangular lines and showed that $N_L(d) = O_L(d)$ for $L = [-1, -\beta] \cup \{\alpha\}$ with $\alpha,\beta > 0$ (we call such $L$-codes ``uniacute''), leaving open the question of determining the leading constant factor.
Balla, Dr\"{a}xler, Keevash, and Sudakov proved a ``uniform bound'' showing $\limsup_{d\to\infty} N_L(d)/d \le 2p$ for $L = [-1, -\beta] \cup \{\alpha\}$ and $p = \lfloor \alpha/\beta \rfloor + 1$.
For which $(\alpha,\beta)$ is this uniform bound tight?
We completely answer this question.

We develop a framework for studying uniacute codes, including a global structure theorem showing that the Gram matrix has an approximate $p$-block structure. 
We also formulate a notion of ``modular codes,'' which we conjecture to be optimal in high dimensions.
\end{abstract}

\maketitle

\section{Introduction}

A \emph{spherical $L$-code}, where $L \subseteq [-1,1)$, consists of unit vectors in $\RR^d$ whose pairwise inner products are contained in $L$.
Let $N_L(d)$ denote the size of the largest spherical $L$-code in $\RR^d$. 
First introduced by Delsarte, Goethals, and Seidel \cite{DGS77}, determining $N_L(d)$ is a central problem in discrete geometry. 

The classical spherical code problem of determining the maximum number of points on a sphere pairwise separated by angle at least $\theta$ is equivalent to $N_L(d)$ with $L = [-1, \cos\theta]$. When $0 < \theta < \pi/2$, we know that $N_L(d)$ grows exponentially in $d$, although there is an exponential gap between the best upper and lower bounds~\cite{KL78,JJP18}.

Another notable case is equiangular lines, which are lines through the origin with pairwise equal angles. For a given common angle $\theta$, this problem corresponds to $L = \{\pm \cos \theta\}$. The problem of determining $N_{\set{\pm \alpha}}(d)$ for a fixed $\alpha \in (0,1)$ and large $d$ was initiated by Lemmens and Seidel \cite{LS73} in 1973, which led to substantial work~\cite{Neu89,Buk16,BDKS18,JP20} that culminated in a recent solution by Jiang, Tidor, Yao, Zhang, and Zhao \cite{JTYZZ21}. 

Bukh~\cite{Buk16} proved that $N_L(d) = O_L(d)$ when $L = [-1, -\beta] \cup \{\alpha\}$ with $\alpha, \beta \in (0,1)$. Here the subscript in $O_L(\cdot)$ means that the hidden constant is allowed to depend on $L$.
Intuitively, Bukh's result says that the allowed acute angles matter more than the allowed obtuse angles.
When $L \subseteq [-1, -\beta] \cup \{\alpha\}$ with $\alpha,\beta >0$, we call spherical $L$-codes \emph{uniacute spherical codes}. Here the set of allowed angles contains exactly one acute angle.
Bukh's result prompts the determination of the growth rate of $N_L(d)$.

\begin{problem}[Rate of uniacute spherical codes] \label{prob:rate}
Given $L \subseteq [-1, -\beta] \cup \{\alpha\}$ with $\alpha, \beta \in (0,1)$, determine
$\lim_{d\to\infty} N_L(d)/d$.
\end{problem}

Bukh showed that $\limsup_{d\to\infty} N_L(d)/d \le 2^{O(\beta^{-2})}$.
Balla, Dr\"{a}xler,  Keevash, and Sudakov \cite{BDKS18} gave the following substantial improvement.
We call it the ``uniform bound.''

\begin{theorem}[Uniform bound~\cite{BDKS18}] \label{thm:uniform-lim}
Let $L = [-1, -\beta] \cup \{\alpha\}$ with $\alpha,\beta \in (0,1)$. Let
$p = \floor{\alpha/\beta} + 1$.
Then 
\[
\limsup_{d \to \infty} \frac{N_L(d)}{d} \le 2p.
\]
\end{theorem}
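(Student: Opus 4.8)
The plan is to work with the Gram matrix and recast the problem as a clique‑covering question. Realise the code as unit vectors $v_1,\dots,v_N\in\RR^d$ with Gram matrix $M=\paren{\ang{v_i,v_j}}_{i,j}$, which is positive semidefinite of rank $\le d$. Let $G$ be the \emph{acute graph} on $[N]$, with $i\sim j$ iff $\ang{v_i,v_j}=\alpha$. Testing $M\succeq0$ on indicators shows that every independent set $S$ of $G$ satisfies $0\le\abs S+\sum_{i\ne j\in S}\ang{v_i,v_j}\le\abs S\paren{1-(\abs S-1)\beta}$, so $G$ has independence number at most $1+1/\beta$; in particular $G$ has at most $1+1/\beta$ connected components, since one vertex per component forms a pairwise‑obtuse set. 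On the other hand, the Gram matrix of an $\alpha$‑clique is $(1-\alpha)I+\alpha J\succ 0$, so an $\alpha$‑clique spans a space of its full size; hence if $[N]$ is partitioned into $r$ cliques of $G$, each has at most $d$ vertices and $N\le rd$. Taking $r$ minimal this reads $N\le\chi(\overline G)\,d$; since $\chi(\overline G)\,d\ge\chi(\overline G)\,\omega(G)\ge N$ makes that trivially true, the entire content of the theorem is the estimate $\chi(\overline G)\le 2p$ — it is enough, in fact, to cover all but $o(N)$ of the $v_i$ by $2p$ cliques of $G$.

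The crux is thus to show that a uniacute code is (up to $o(N)$ vectors) a union of at most $2p$ near‑$\alpha$‑cliques. The factor $p$ should come from the following calculation, the geometric core: if $C_1,\dots,C_r$ are $\alpha$‑cliques all of whose mutual inner products lie in $[-1,-\beta]$, then the normalised clique sums $\widehat\sigma_\ell=\sigma_\ell/\snorm{\sigma_\ell}$ with $\sigma_\ell=\sum_{v\in C_\ell}v$ satisfy $\ang{\widehat\sigma_i,\widehat\sigma_j}\le-\beta/\alpha+o(1)$ as the cliques grow (because $\snorm{\sigma_\ell}^2=\abs{C_\ell}\bigl(1+(\abs{C_\ell}-1)\alpha\bigr)\sim\alpha\abs{C_\ell}^2$ while $\ang{\sigma_i,\sigma_j}\le-\beta\abs{C_i}\abs{C_j}$); applying the pairwise‑obtuse bound to the $\widehat\sigma_\ell$ then forces $r\le 1+\alpha/\beta+o(1)$, i.e.\ $r\le p$ once the cliques are large. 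The extra factor $2$ comes from the fact that two cliques can share their entire span: since $\ang{v,-v}=-1\in L$, the code may contain both an $\alpha$‑clique $B$ and its negation $-B$, a doubled system $B\cup(-B)$ with $2\abs B$ vectors in only $\abs B$ dimensions, needing two colour classes in any clique cover. A global analysis should show that, after deleting $o(N)$ vectors, the code is assembled from at most $p$ such doubled clique systems, so a block of size $n_\ell$ contributes $\ge n_\ell/2-o(n_\ell)$ to the dimension. Then
\[
d\ \ge\ \dim\operatorname{span}(v_1,\dots,v_N)\ \ge\ \max_\ell\tfrac12 n_\ell-o(N)\ \ge\ \frac{N}{2p}-o(N),
\]
which rearranges to $N_L(d)\le(2p+o(1))d$.

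The routine ingredients are the independence‑number bound, the triviality $N\le\chi(\overline G)d$, and the clique‑sum computation giving $r\le p$; I expect the real obstacle to be the global structure statement — upgrading the local fact ``at most $p$ mutually‑obtuse large cliques'' to ``the code is, up to $o(N)$ vectors, a union of $\le p$ doubled clique systems'', equivalently $\chi(\overline G)\le 2p$. This is genuinely geometric: for an abstract graph a bounded independence number of $\overline G$ says nothing about $\chi(\overline G)$, so the proof must exploit that the $v_i$ are unit vectors realising only the inner products in $L$, and it must keep every error term of size $o(d)$ so that the final chain of inequalities survives. This is the structure theorem advertised in the abstract, and it is where essentially all of the work lies.
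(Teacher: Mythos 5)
Your preliminary ingredients are sound and do parallel the paper's: the bound on pairwise-obtuse sets is \cref{lem:negative-clique}, and your normalized clique-sum computation forcing at most $p$ mutually obtuse large cliques is essentially \cref{lem:many-cliques}. But the proposal has a genuine gap exactly where you place it, and the specific structure you conjecture to fill it is not correct as stated. First, the ``doubling'' in extremal codes does not come from antipodal systems $B\cup(-B)$: within a part, the negative components responsible for the factor $2$ are $\alpha$-singular pairs $\{a,a'\}$ with $\ang{a,a'}=2\alpha-1$ (not $-1$), reflected through a common midpoint (\cref{prop:a-singular}, \cref{lem:a-singular-edges}), and such pairs exist only when $\lambda=(1-\alpha)/(\alpha+\beta)\ge 1$. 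Second, and more seriously, the reduction to ``cover all but $o(N)$ vectors by $2p$ cliques of the acute graph'' is not a valid route for an upper bound that must hold for \emph{all} codes: what the global structure actually gives (\cref{thm:structure}, \cref{cor:global}) is that, after deleting $O_L(1)$ vectors, the negative graph is a $\Delta$-modification of a complete $p$-partite graph, i.e.\ the negative graph \emph{inside} a part only has bounded maximum degree. Such a part need not be coverable by two $\alpha$-cliques even after deleting $o(N)$ vertices (e.g.\ linearly many negative triangles inside a part), so $\chi(\overline G)\le 2p+o(1)$ is simply not available; the bound must instead be extracted from a dichotomy of the form ``either the part's negative components are $\alpha$-singular pairs, or the part is substantially smaller than $2d$.''

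That dichotomy is the real content you are missing, and it is spectral rather than combinatorial. The paper writes the Gram matrix of a part as $(\alpha+\beta)^{-1}M_V=\lambda I-A_H+\mu J$, where $A_H$ is the weighted adjacency matrix of the bounded-degree modification, and proves the rank bound \cref{prop:rank-bound}: $\abs{V}\le d+r-1$ when $r\ge 1$ of the negative components are $\alpha$-singular, and $\abs{V}\le d+O_{L,\Delta}(d/\log\log d)$ otherwise. The proof requires locating $\lambda$ relative to the spectrum of $A_H$, Perron--Frobenius to control $\dim\ker(\lambda I-A_H)$ by the number of $\alpha$-singular components, and, in the case $\lambda=\lambda_2(A_H)$, the sublinear eigenvalue multiplicity theorem \cref{thm:eigen-mult}; none of this (nor any substitute, such as the different argument in \cite{BDKS18}, which the paper cites for \cref{thm:uniform-lim} itself) appears in your sketch. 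Since each $\alpha$-singular component has size at least two, $r\le\abs{V}/2$ and hence $\abs{V}\le 2d-2$, which summed over the at most $p$ parts gives $N_L(d)\le 2pd+O_L(1)$ (\cref{thm:general-upper}); your final dimension count $d\ge n_\ell/2-o(N)$ is the conclusion of this analysis, not something that follows from the clique-cover framing. So the proposal correctly identifies the shape of the answer but leaves unproved both the global $p$-block decomposition and the within-part rank bound, which together constitute essentially all of the proof.
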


The parameter $p$ plays an important role in the problem, as we will see later in the global structure theorem. Also, as we will show, for each $p$, there is some $(\alpha,\beta)$ with $p = \floor{\alpha/\beta} + 1$ so that the above uniform bound is tight. 

A notable result from \cite{BDKS18} is an analogous uniform bound for equiangular lines:
\[
\limsup_{d \to \infty} \frac{N_{\set{\pm\alpha}}(d)}{d} \le 2,
\]
and equality occurs if and only if $\alpha = 1/3$.
The precise limit was later determined for all $\alpha$ in \cite{JTYZZ21}.

\cref{thm:uniform-lim} leads to the following natural question.

\begin{question}
    For which $(\alpha,\beta)$ is the uniform bound in \cref{thm:uniform-lim} tight?
\end{question}

We give a complete answer to this question. The main result is stated below.
More refined bounds will be given later in \cref{thm:detail}.

\begin{theorem} \label{thm:main}
Let $L = [-1, -\beta] \cup \{\alpha\}$ with $\alpha,\beta \in (0,1)$. 
Let 
\[
p = \floor{\frac{\alpha}{\beta}} + 1
\qquad 
\text{and} \qquad
\lambda = \frac{1-\alpha}{\alpha +\beta}.
\]
If any of the following holds:
\begin{enumerate}[(a)]
    \item $\alpha = \beta$ and $\lambda^2 \in \ZZ$,
    \item $\alpha/\beta \in \ZZ$, $\alpha \ge 2\beta$, and $\lambda \in \ZZ$,
    \item $\alpha/\beta \notin \ZZ$ and $\lambda \ge 1$,
\end{enumerate}
then 
\[
    N_L(d) = 2pd + O_L(1),
\]
and otherwise, there is some absolute constant $c > 0$ such that
\[
\limsup_{d \to \infty} \frac{N_L(d)}{d} \le 2p-c.
\]
\end{theorem}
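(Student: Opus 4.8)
The statement has two halves: constructions showing tightness when (a), (b), or (c) holds, and a strengthened upper bound $\limsup_{d\to\infty}N_L(d)/d\le 2p-c$ otherwise. I sketch a plan for each.

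\textbf{Constructions.}
For tightness I would produce, for infinitely many $d$, a positive semidefinite matrix $G$ with diagonal $1$, off-diagonal entries in $\{\alpha\}\cup[-1,-\beta]$, $\operatorname{rank} G\le d$, and $n=2pd-O_L(1)$ rows; a square root of $G$ then has rows forming an $L$-code of the required size. These should be the \emph{modular codes} introduced in the paper. Guided by the global structure theorem, I would take $G$ with an approximate $p$-block shape: $p$ ``near-clique'' diagonal blocks in which nearly every entry is $\alpha$, with the remaining within-block entries and all between-block entries set to prescribed negative values chosen from $[-1,-\beta]$ (crucially, exploiting the full interval and not just its endpoint $-\beta$). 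Block-diagonalizing reduces $G\succeq 0$ to a spectral condition on a graph encoding each block's deviation pattern -- essentially a bound by $\lambda$ on its governing eigenvalue -- while the dimension spent on a block is its size minus the multiplicity with which that eigenvalue occurs; reaching the extremal count $n=2pd$ forces each block's graph to be, up to lower-order terms, a disjoint union of copies of a single small graph realizing the critical eigenvalue, glued so that all between-block entries stay in $[-1,-\beta]$. The three hypotheses are exactly the parameter regimes admitting such a graph together with a compatible between-block value: $\lambda\in\ZZ$ with $\alpha\ge 2\beta$ permits a $\lambda$-regular component (case (b)); when $\alpha=\beta$ the symmetry of a two-block configuration admits a complete bipartite component realizing $\lambda$, needing only $\lambda^2\in\ZZ$ (case (a)); and when $\alpha/\beta\notin\ZZ$ the between-block values range over an interval, removing the integrality requirement and leaving only $\lambda\ge1$ (case (c)). When $\lambda=1$, i.e.\ $\alpha=\beta=1/3$, the construction reduces to doubling an optimal equiangular line system. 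I expect this direction to be mainly a matter of identifying the correct families and bookkeeping, with the one genuinely delicate point being the simultaneous requirement that \emph{all} between-block entries lie in $[-1,-\beta]$.

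\textbf{Upper bound.}
By the global structure theorem, after deleting $O_L(1)$ vectors an $L$-code in $\RR^d$ splits into $p$ parts $V_1,\dots,V_p$, where within each part all but a bounded-density set of inner products equal $\alpha$ and between distinct parts all inner products are negative. Writing $G_i$ for the Gram matrix of $V_i$, one has $G_i=(1-\alpha)I+\alpha J-(\alpha+\beta)A_i+E_i$, with $A_i$ the adjacency matrix of the within-block deviation graph and $E_i$ a bounded-magnitude correction on its edges. Testing $G_i\succeq 0$ on the eigenvectors of $A_i$ orthogonal to $\one$, absorbing $E_i$, shows the top eigenvalue of $A_i$ is at most $\lambda+o(1)$, and $\operatorname{rank} G_i$ exceeds $\abs{V_i}$ minus the multiplicity with which $A_i$ nearly attains $\lambda$. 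A graph whose top eigenvalue is $\le\lambda$ and that attains an eigenvalue near $\lambda$ with multiplicity a definite fraction of its order must, up to bounded modification, be a disjoint union of copies of a fixed graph attaining $\lambda$; such a graph has order $\ge\lambda+1$, and the extremal value is attainable only under arithmetic restrictions on $\lambda$ (integrality of $\lambda$, or of $\lambda^2$ in the $\alpha=\beta$ regime), whose failure forces the multiplicities to shrink. Combining this with the relationship between $\abs{V_i}$ and $\dim\operatorname{span}V_i$, then summing over blocks while controlling the between-block interaction as in the proof of \cref{thm:uniform-lim} -- where the negativity of cross inner products and the value $p=\floor{\alpha/\beta}+1$ enter -- recovers $N_L(d)\le2pd+O_L(1)$, improved to $N_L(d)\le(2p-c)d+O_L(1)$ whenever none of (a)--(c) holds.

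\textbf{Main obstacle.}
The hard part is that $c$ can be taken \emph{absolute}, i.e.\ independent of $(\alpha,\beta)$. Since the parameter space is a continuum, a finitely-many-exceptions argument is unavailable; I would instead prove a quantitative rigidity statement. If none of (a)--(c) holds, then no graph realizes the governing eigenvalue with multiplicity-to-order ratio approaching the critical value that equality in $2pd$ demands, and this failure is \emph{uniform}: the achievable ratio stays bounded away from the critical value by an amount depending only on coarse invariants -- whether $\lambda\in\ZZ$, whether $\lambda^2\in\ZZ$, the value of $\floor{\alpha/\beta}$, and whether $\lambda\ge1$ -- because the relevant spectral data changes only across these thresholds (e.g.\ a rational non-integer $\lambda$ is not a graph eigenvalue, and a $\lambda<1$ cannot be the top eigenvalue of a graph with an edge). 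A fixed additive deficit in this ratio then propagates through the per-block bound and the aggregation step to a fixed multiplicative loss in $N_L(d)/d$. The technical heart is keeping this robust against the $o(1)$ and $O_L(1)$ errors from the structure theorem -- ensuring they cannot erode the gain once $d$ is large -- and handling the borderline regime where $\lambda$ is only slightly above $1$ and the near-clique blocks are barely positive semidefinite, so that the per-block ratio varies continuously while the assembled bound still drops by a fixed amount.
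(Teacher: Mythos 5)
Your overall architecture (global structure theorem, per-block spectral/rank analysis, modular-code constructions, and the worry about a uniform constant $c$) matches the paper's, and your sketch of case (c) — exploiting the slack $\alpha/(p-1)-\beta>0$ to relax cross-block values — is essentially what the paper does via random rotations. But there is a genuine gap in where you locate the arithmetic conditions, and it breaks both halves of your argument. You transplant the equiangular-lines mechanism: you assume the within-block deviation graph is an (essentially unweighted) graph that must \emph{realize} $\lambda$ as a top eigenvalue, so that tightness requires $\lambda$-regular or complete-bipartite components and failure of integrality of $\lambda$ (or $\lambda^2$) kills the multiplicity. In the uniacute setting this is false: the within-block negative inner products range over all of $[-1,-\beta]$, so the deviation graph is \emph{weighted} with weights in $[1,(1-\beta)/(\alpha+\beta)]$, and a single matched pair at inner product exactly $2\alpha-1$ is a component whose top eigenvalue is exactly $\lambda$ — for every real $\lambda\ge 1$, with no arithmetic hypothesis. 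Thus "a rational non-integer $\lambda$ is not a graph eigenvalue" gives you nothing here, and your claimed uniform deficit in the multiplicity-to-order ratio does not exist at the within-block level. Moreover, tightness $N_L(d)\approx 2pd$ forces each block to be (up to lower order) a perfect matching of such $\alpha$-singular pairs: components of size $s\ge 3$ cap a block at roughly $\tfrac{s}{s-1}d\le\tfrac32 d$ by the rank bound (\cref{prop:rank-bound}), so your proposed $\lambda$-regular or complete-bipartite blocks in cases (a),(b) would not even reach $2pd$.

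The actual mechanism, which your proposal is missing, lives in the \emph{cross-block} structure. When $\alpha/\beta=p-1$ exactly, the paper shows (via \cref{lem:a-singular-edges} and a rigidity computation with the common midpoints $a_i$, which satisfy $\sum_i a_i=0$) that the inner products between two $\alpha$-singular pairs in different blocks are forced into the exact set $\{\alpha,-\beta,-(2\beta+\alpha)\}$; normalizing the difference vectors of each pair yields unit vectors whose cross inner products lie in $\{0,\pm\lambda^{-1}\}$, i.e.\ an \emph{integer} matrix $Q$ with $\lambda$ in its spectrum. Integrality then enters through linear algebra over $\ZZ$: if $\lambda\notin\ZZ$ (resp.\ $\lambda^2\notin\ZZ$ when $p=2$), the multiplicity of $\lambda$ in $Q$ is at most half the dimension, which caps the number of matched pairs and yields $N_L(d)\le(2p-\tfrac12)d+O_L(1)$, resp.\ $\tfrac{11}{3}d+O_L(1)$ (\cref{prop:algebraic-upper}); together with the $pd+o(d)$ bound for $\lambda<1$ (\cref{thm:general-upper}) this gives the absolute constant $c$ directly, with no continuity/compactness argument over $(\alpha,\beta)$ needed. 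Dually, the lower-bound constructions in cases (a),(b) need $\{0,\pm1\}$-entried matrices with $Q_i^\intercal Q_i=\lambda I$ (resp.\ $\lambda^2 I$) and controlled cross products — Paley-type weighing matrices lifted through mutually orthogonal Latin squares in the paper — which is exactly where $\lambda\in\ZZ$ and $\lambda^2\in\ZZ$ are used; the within-block part of the construction is always just the matching at $2\alpha-1$. (A minor slip: $\lambda=1$ does not imply $\alpha=\beta=1/3$, only $\beta=1-2\alpha$.)
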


\subsection{Methods}
Our work builds on methods for equiangular lines, where the main result is recalled below.

\begin{theorem}[Equiangular lines with a fixed angle~\cite{JTYZZ21}] \label{thm:equiangular}
    Let $ \alpha \in (0,1)$ and $\lambda = (1-\alpha)/\alpha$. Let $k = k(\lambda)$ denote the smallest $k$ such that there exists a $k$-vertex graph whose adjacency matrix has top eigenvalue exactly $\lambda$; set $k = \infty$ is none exists. 
    \begin{enumerate}[(a)]
        \item \label{itm:equiangular-a} If $k < \infty$, then $N_{\set{\pm \alpha}} (d) = \floor{k(d-1)/(k-1)}$ for all $d > d_0(\alpha)$ for some $d_0(\alpha)$;
        \item \label{itm:equiangular-b} If $k = \infty$, then $N_{\set{\pm\alpha}}(d) = d + o(d)$.
    \end{enumerate}
\end{theorem}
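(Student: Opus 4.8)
The plan is to establish the lower bounds by explicit constructions and the matching upper bounds by a spectral analysis of the Gram and Seidel matrices attached to an equiangular line system, the decisive new ingredient being an eigenvalue-multiplicity bound for bounded-degree graphs. For the lower bound in part~(a), I would fix a connected graph $H$ on $k=k(\lambda)$ vertices whose largest adjacency eigenvalue equals $\lambda$; connectedness makes this eigenvalue simple, so $\lambda I_k-A_H$ is positive semidefinite of rank $k-1$ and yields vectors $y_1,\dots,y_k\in\RR^{k-1}$ with $\langle y_i,y_i\rangle=\lambda$ and $\langle y_i,y_j\rangle$ equal to $-1$ on edges of $H$ and $0$ otherwise. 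Appending to each $y_i$ a common first coordinate $c$, with $c$ chosen so that the two possible values of $\langle (c;y_i),(c;y_j)\rangle$ become $\pm\alpha\,\|(c;y_i)\|^2$ (this is where the numerical relation between $\alpha$ and $\lambda$ enters), produces $k$ equiangular lines at angle $\arccos\alpha$ in $\RR^k$ sharing their first coordinate; since that coordinate is shared one can install $q$ disjoint copies of the $y_i$-parts into $q$ separate groups of $k-1$ coordinates, giving $qk$ equiangular lines in $\RR^{1+q(k-1)}$, and taking $q=\lfloor(d-1)/(k-1)\rfloor$ together with part of one more block yields $\lfloor k(d-1)/(k-1)\rfloor$ lines once $d$ is large. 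For~(b), $N_{\{\pm\alpha\}}(d)\ge d$ is immediate, for instance from the rank-$d$ positive semidefinite Gram matrix $(1-\alpha)I_d+\alpha J_d$.

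For the upper bound, given $N$ equiangular lines at angle $\arccos\alpha$ in $\RR^d$ I would pick a unit vector on each line, obtaining a Gram matrix $M=(1-\alpha)I_N+\alpha J_N-2\alpha A_G$, where $G$ records which inner products are negative, $M\succeq 0$, and $\operatorname{rank} M\le d$. Intersecting $\ker M$ with $\mathbf 1^\perp$ shows $A_G$ has $\lambda$ as an eigenvalue of multiplicity at least $N-d-1$, while compressing $M$ to $\mathbf 1^\perp$ and applying eigenvalue interlacing gives $\lambda_2(A_G)\le\lambda$, so \emph{at most one connected component of $G$ has spectral radius exceeding $\lambda$}. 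A component with spectral radius below $\lambda$ contributes nothing to the $\lambda$-eigenspace, and a component with spectral radius exactly $\lambda$ contributes multiplicity $1$ while using at least $k=k(\lambda)$ vertices, so all such components together contribute at most $N/k$; everything then hinges on the single possible ``heavy'' component $G_1$ with $\lambda_1(G_1)>\lambda\ge\lambda_2(G_1)$, in which $\lambda$ occurs as the second eigenvalue, a priori with large multiplicity.

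To control $G_1$ I would use two inputs. The first is a Seidel-switching and pruning step showing that, after deleting a number of vertices bounded in terms of $\alpha$, the components of $G$ — and in particular $G_1$ — have maximum degree bounded in terms of $\alpha$. The second, and this is the main theorem to be proved, is a spectral graph theory statement: \emph{a connected graph on $n$ vertices of bounded maximum degree has second-largest adjacency eigenvalue of multiplicity $o(n)$}, with a quantitative bound of the form $O(n/\log\log n)$. Granting these, $\operatorname{mult}_\lambda(A_{G_1})=o(N)$, hence $N-d-1\le\operatorname{mult}_\lambda(A_G)\le N/k+o(N)$, which gives $N\le\frac{k}{k-1}\,d\,(1+o(1))$ in case~(a); in case~(b) no component can have spectral radius exactly $\lambda$ (that would exhibit a finite graph realizing $\lambda$), so the $N/k$ term vanishes and the same estimate yields $N=d+o(d)$, matching the lower bound. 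I expect the multiplicity theorem to be the main obstacle: the natural approach is to take an orthonormal basis of the $\lambda_2$-eigenspace together with the Perron eigenvector and, exploiting connectivity and bounded degree, attach to each basis vector a local combinatorial witness — say by localizing eigenvectors on balls around vertices and invoking a Chebyshev-type inequality for walk counts — so that too large a multiplicity would overdetermine $G$. Finally, upgrading the case-(a) estimate from $\tfrac{k}{k-1}d(1+o(1))$ to the exact value $\lfloor k(d-1)/(k-1)\rfloor$ for $d>d_0(\alpha)$ will require a stability argument: near-equality above forces all but boundedly many components to be $k$-vertex graphs of spectral radius exactly $\lambda$ and forces $G_1$ to be small, after which one optimizes directly over the resulting block structures to pin down the floor.
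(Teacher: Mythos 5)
Your overall architecture is the right one, and it matches the proof of \cite{JTYZZ21} that this paper quotes rather than proves (its machinery is mirrored here in \cref{prop:rank-bound}): pass to unit vectors with Gram matrix $(1-\alpha)I+\alpha J-2\alpha A_G$, switch so that the negative graph $G$ has bounded degree, lower-bound $\operatorname{mult}_\lambda(A_G)$ by $N-d-1$, use interlacing to get $\lambda_2(A_G)\le\lambda$, note that each component of spectral radius exactly $\lambda$ costs at least $k$ vertices, and control the one possible heavy component via sublinear second-eigenvalue multiplicity for connected bounded-degree graphs --- which is precisely \cref{thm:eigen-mult} and may be quoted rather than reproved (your localization sketch for it is far from a proof, but that is the lesser issue). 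Your lower-bound constructions are also essentially the standard ones; note that your appended-coordinate computation forces $c^2=1/2$ and hence the normalization $\lambda=(1-\alpha)/(2\alpha)$ used in \cite{JTYZZ21}, so keep the conventions consistent when writing this up.

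The genuine gap is the endgame of part \ref{itm:equiangular-a}. Your counting only yields $N\le\frac{k}{k-1}d(1+o(1))$, and the proposed ``stability argument'' cannot recover the exact value: even if the heavy component $G_1$ is forced to be of bounded size $s$, the inequalities $N\le d+1+t+\operatorname{mult}_\lambda(A_{G_1})$ and $kt\le N-s$ still permit $N$ to exceed $\floor{k(d-1)/(k-1)}$ by an additive constant, and no optimization over block structures removes that slack. The actual proof replaces stability by a dichotomy (this is exactly Cases 2--3 of \cref{prop:rank-bound}): if some component has top eigenvalue exactly $\lambda$, then \emph{no} component can have top eigenvalue exceeding $\lambda$ --- test the positive semidefinite matrix $\lambda I-A_G+\tfrac12 J$ against $w=u+tv\perp\mathbf 1$, where $u,v$ are the Perron vectors of the two components --- so $\lambda I-A_G\succeq0$, and then $\ker\bigl(\lambda I-A_G+\tfrac12 J\bigr)=\ker(\lambda I-A_G)\cap\ker J$ has dimension at most $t-1$, giving the sharp bound $N\le d+t-1$; combined with $kt\le N$ this yields $N\le d-1+\floor{(d-1)/(k-1)}=\floor{k(d-1)/(k-1)}$ exactly. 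Otherwise $t=0$, every component has top eigenvalue off $\lambda$, and the multiplicity theorem gives $N\le d+o(d)$, which is below the floor for large $d$. Your setup, in which $\lambda$-components and a heavy component coexist and must be balanced against each other, is precisely the configuration that is ruled out. Relatedly, your plan to delete a bounded number of vertices before the degree reduction already costs an additive constant and is therefore incompatible with proving the exact equality; for equiangular lines switching alone achieves bounded degree (as this paper remarks, no vertex removal is needed there), and that is what the cited proof uses.
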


A natural generalization of equiangular lines is spherical two-distance sets, namely spherical $L$-codes for $L = \set{-\beta,\alpha}$ with $\alpha,\beta > 0$. 
The limiting value of $N_L(d)/d$ was conjectured in \cite{JTYZZ23} and proved whenever $\alpha < 2\beta$ or $(1-\alpha)/(\alpha+\beta) < \gamma^{1/2} + \gamma^{-1/2} \approx 2.02$ where $\gamma$ is the unique real solution to $\gamma^3 = \gamma + 1$ \cite{JP21+,JTYZZ23}.
In this paper, we propose an extension of the conjectured limit from spherical two-distance sets to more general uniacute codes. To motivate and state the conjecture, we need to first discuss the solution framework.

Given a spherical code $C$, define its \emph{Gram graph} $G_C$ to be the edge-weighted complete graph on vertex set $C$ and such that edge $uv$ has weight $\ang{u,v}$.
Let $G_C^-$ be the \emph{negative graph}, which is the simple graph on consisting of negative edges of $G_C$ (and then forgetting the edge weights).
Likewise let $G_C^+$ be the \emph{positive graph}, a simple graph consisting of positive edges of $G_C$.

First, we show that $G_C$ is close to a $p$-block ``template,'' a notion that we will make precise later on. See \cref{fig:template} for an illustration of the template.
This idea originates from \cite{BDKS18} and was further developed in \cite{JTYZZ21,JTYZZ23}. 
A refined statement is given below and proved in \cref{sec:global}.

\begin{figure}[h]
\definecolor{wwccff}{rgb}{0.4,0.8,1}
\begin{tikzpicture}[scale = 0.4]
\fill[fill=wwccff!30] (-3.9993347497973892,-0.07294901687515672) -- (-1.9364916731037085,3.5) -- (0,2) -- (-1.7320508075688779,-1) -- cycle; 
\node at (-2, 1) {\large$\gamma_{12}$};

\fill[fill=wwccff!30] (-1.7320508075688779,-1) -- (-2.0628430766936816,-3.427050983124842) -- (2.06284307669368,-3.427050983124843) -- (1.7320508075688772,-1) -- cycle;
\node at (0, -2.25) {\large$\gamma_{13}$};

\fill[fill=wwccff!30] (1.7320508075688772,-1) -- (3.9993347497973892,-0.07294901687515976) -- (1.9364916731037083,3.5) -- (0,2) -- cycle;
\node at (2, 1) {\large$\gamma_{23}$};

\draw [draw=none, fill=wwccff] (0, 4) circle (2cm) node {\large$\alpha$};
\node[above] at (0, 6) {$V_2$};
\draw [draw=none, fill=wwccff] (-3.464101615137755, -2) circle (2cm) node {\large$\alpha$};
\node[left] at (-5.4641016151377535, -2) {$V_1$};
\draw [draw=none, fill=wwccff] (3.4641016151377535, -2) circle (2cm) node {\large$\alpha$};
\node[right] at (5.4641016151377535, -2) {$V_3$};
\end{tikzpicture}

\caption{An example of a template with $p$ vertex parts $V_1, \dots, V_p$. Edges within each $V_i$ have weight $\alpha$. Edges between $V_i$ and $V_j$ have weight $\gamma_{ij} \in L \setminus \{\alpha\}$ for each $i \ne j$.} \label{fig:template}
\end{figure}
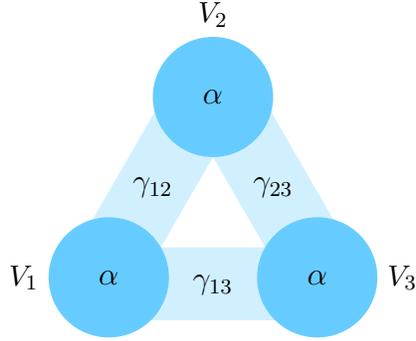

\begin{definition}
    An edge-weighted graph $H$ is a \textit{$\Delta$-modification} of another edge-weighted graph $G$ on the same vertex set if one can obtain $H$ from $G$ by changing edges and weights on a graph of maximum degree at most $\Delta$.
\end{definition}

\begin{theorem}[Global structure theorem]\label{thm:structure}
Fix $ \alpha, \beta \in (0,1)$ and let $L \subseteq [-1,-\beta]\cup\{\alpha\}$ and $p = \lfloor \alpha/ \beta \rfloor + 1$. There exist constants $\Delta_0$ and $K$ (both depending only on $\alpha$ and $\beta$) such for any spherical $L$-code $C$ and any integer $\Delta \geq \Delta_0$, we can remove at most $\Delta^2$ vectors from $C$ so that the Gram graph of remaining vectors is a $\Delta$-modification of a complete edge-weighted graph of the following form:
\begin{enumerate}[(a)]
	\item the vertex set can be partitioned as $V_1 \sqcup \cdots \sqcup V_p$ (some parts could be empty);
	\item for $1 \leq i \leq p$, all edges in $V_i$ have weight $\alpha$;
	\item for $1 \leq i  < j \leq p$, there is some $\gamma_{ij} \in L \setminus \{\alpha\}$ such that every edge between $V_i$ and $V_j$ has weight within $K \Delta^{-1/2}$ of $\gamma_{ij}$. \end{enumerate}
\end{theorem}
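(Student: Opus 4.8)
The plan is to follow the now-standard "template" strategy from \cite{BDKS18,JTYZZ21,JTYZZ23}, adapted to the uniacute setting where only one acute value $\alpha$ is allowed. The heart of the matter is a spectral/combinatorial dichotomy argument applied to the \emph{positive graph} $G_C^+$: I want to show that after deleting a bounded number of vectors, $G_C^+$ is, up to a bounded-degree modification, a disjoint union of at most $p$ cliques. Once that is established, taking $V_1,\dots,V_p$ to be these cliques immediately gives clauses (a) and (b), and clause (c) is a separate "near-constant cross-edges" statement that I will prove afterwards.

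First I would set up the linear-algebraic input. Consider the Gram matrix $M = I + A$ where $A$ has entries $\ang{u,v}$; since $M \succeq 0$, the matrix $\alpha^{-1}(M - (1-\alpha)I)$ — i.e. the matrix that is $1$ on positive edges and $\le -\beta/\alpha$ on negative edges — has least eigenvalue at least $-(1-\alpha)/\alpha$, a bounded quantity. This is the "bounded smallest eigenvalue" hypothesis that drives everything. The classical consequence (going back to the $-2$ eigenvalue theory, and used in this form in \cite{BDKS18}) is that such a graph cannot contain certain forbidden configurations with many vertices; concretely, I would show that $G_C^+$ cannot contain $K_{1,t}$ for $t$ larger than some $t_0(\alpha,\beta)$ as an induced subgraph once we also use the negative-edge constraint, because a high-degree vertex together with its non-adjacent-in-$G^+$ (hence $\le -\beta$) neighbors forces $M$ to have a negative eigenvalue. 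More carefully, the right statement is a bound on the number of parts: any set of vectors that is pairwise in the "$\alpha$ or close-to-something" regime organizes into at most $p = \floor{\alpha/\beta}+1$ groups, because if we had $p+1$ mutually "negative" directions each of inner product $\le -\beta$, together with the constraint that within-group inner products are $\alpha$, a counting/positive-semidefiniteness argument (sum of all vectors has nonnegative squared norm) yields a contradiction — this is exactly where $\floor{\alpha/\beta}+1$ enters.

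The main technical step, and the one I expect to be the principal obstacle, is passing from "no large forbidden induced subgraph" to the \emph{global} clique-partition structure after removing only $O(\Delta^2)$ vertices and modifying only a degree-$\Delta$ subgraph. This is a Ramsey-type stability argument: I would first extract a large "core" by iteratively deleting vertices of small positive-degree (or small negative-degree, depending on which side is sparse) to reach a configuration where $G_C^+$ is everywhere dense within components; then use the bounded smallest eigenvalue to show each dense component is a near-clique missing only a bounded-degree graph (a quantitative version of "a graph with smallest eigenvalue $\ge -c$ and no small vertex cover of non-edges is a clique"), and show the number of components is at most $p$. The quantitative bookkeeping — tracking that exactly $\Delta^2$ deletions and $K\Delta^{-1/2}$ weight-tolerance suffice — is the delicate part; the exponents $\Delta^2$ and $\Delta^{-1/2}$ should emerge from a single application of a Cheeger/expander-mixing type inequality: a component of $G_C^+$ that is far (in edit distance $\gg \Delta$) from a clique would contain $\sim \Delta$ disjoint non-edges, and the corresponding $\pm 1$ test vector would witness an eigenvalue of $M$ below $-(1-\alpha)/\alpha - \Omega(\Delta^{1/2}\cdot\text{something})$, contradicting $M \succeq 0$ once $\Delta \ge \Delta_0$.

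Finally, for clause (c): within the core, fix $i \ne j$ and two vertices $u,u' \in V_i$ and $v \in V_j$; since $\ang{u,u'} = \alpha$ and both $\ang{u,v},\ang{u',v} \in L\setminus\{\alpha\} \subseteq [-1,-\beta]$, a $3\times 3$ principal minor of $M$ being $\succeq 0$ forces $\ang{u,v}$ and $\ang{u',v}$ to be close — more precisely, averaging over a large clique $V_i$ and using that the Gram matrix restricted to $V_i \cup \{v\}$ has bounded smallest eigenvalue pins all the cross inner products from $v$ to $V_i$ to within $O(|V_i|^{-1/2}) = O(\Delta^{-1/2})$ of a common value; doing this symmetrically in $i$ and $j$ and absorbing the $O(\Delta^{1/2})$ exceptional vertices into the allowed degree-$\Delta$ modification yields a single $\gamma_{ij} \in L\setminus\{\alpha\}$ with all $V_i$–$V_j$ edges within $K\Delta^{-1/2}$ of it. I would assemble these pieces in the order: (1) bounded smallest eigenvalue and forbidden configurations; (2) bound of $p$ on the number of "positive components"; (3) stability giving the near-clique partition after $O(\Delta^2)$ deletions; (4) the cross-edge concentration giving $\gamma_{ij}$.
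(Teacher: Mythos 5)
The central step of your argument fails. You claim that a part of $G_C^+$ that is far from a clique would contain $\sim\Delta$ disjoint non-edges, and that a $\pm1$ test vector supported on them would push an eigenvalue of the Gram matrix below the allowed bound. But non-edges of $G_C^+$ inside a part are negative pairs, and linearly many \emph{disjoint} negative pairs inside a part are perfectly consistent with positive semidefiniteness---indeed they occur in the extremal constructions (see \cref{ex:uniacute-construction} and \cref{sec:lower}, where the negative graph inside each part is a perfect matching of $-1$'s). Concretely, for disjoint negative pairs $\{u_i,u_i'\}$ whose cross inner products equal $\alpha$, the vector $w=\sum_i(\mathbf{1}_{u_i}-\mathbf{1}_{u_i'})$ gives $w^\intercal M_C w=\sum_i\paren{2-2\ang{u_i,u_i'}}\ge 0$: the cross terms cancel and the pair terms are positive, so there is no contradiction, and the statement you are aiming for (each part within edit distance $O(\Delta)$ of a clique) is false. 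The theorem only asserts a bounded-\emph{degree} modification, which may change linearly many edges; what must be bounded inside each part is the maximum negative degree, and that does not follow from your forbidden configurations (a bounded induced star in $G_C^+$ is just a bounded negative clique, i.e.\ \cref{lem:negative-clique}). The missing ingredient is the lopsided configuration of \cref{lem:very-strong-positive-clique}: a vertex cannot have $\ge\Delta$ neighbors with weight $\ge\gamma+K\Delta^{-1/2}$ and $\ge\Delta$ neighbors with weight $\le\gamma-K\Delta^{-1/2}$ inside a set whose internal negative degrees are bounded, proved with the test vector $t\mathbf{1}_v-\mathbf{1}_A/\abss{A}+\mathbf{1}_B/\abss{B}$. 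The paper's proof runs on this lemma (via \cref{lem:strong-positive-clique,lem:positive-clique}) together with an iterative Ramsey extraction of positive cliques of size $P$ (possible since negative cliques are bounded), which classifies every remaining vertex as mostly-positive or mostly-negative to each extracted clique and thereby produces the partition after discarding only $O_L(\Delta^2)$ vertices, with \cref{lem:many-cliques} capping the number of parts at $p$. Your proposal has no mechanism playing this role, and ``iteratively deleting vertices of small positive degree'' comes with no bound of $\Delta^2$ on the number of deletions.

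There are also problems in your treatment of clause (c). A single $3\times3$ principal minor does not force $\ang{u,v}\approx\ang{u',v}$: three unit vectors in $\RR^3$ with $\ang{u,u'}=\alpha$ and very different cross products are fine. Moreover, edges between parts need not all lie in $[-1,-\beta]$; a bounded-degree set of $\alpha$-edges can cross, and these must be absorbed into the modification. The averaging-over-a-clique argument you sketch is essentially \cref{lem:edges-to-positive-clique}, but it only pins down all but $O(\Delta)$ cross edges per vertex and itself requires the within-part negative degree bound you have not established; one then needs the two-sided argument of \cref{lem:two-positive-cliques} to merge the per-vertex values into a single $\gamma_{ij}$, and a final step replacing $\gamma_{ij}$ by an actual inner product in $L\setminus\{\alpha\}$. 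These exceptional edges and vertices are precisely what force both the degree-$\Delta$ modification and the $\Delta^2$ vertex removal in the statement.
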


Informally, the global structure theorem says that after removing $O_L(1)$ vertices, and changing edge-weights on a subgraph of maximum degree $O_L(1)$, the resulting edge weights are very close to a $p$-block structure. \cref{thm:structure} is proved in \cref{sec:global}.

\begin{remark}[Necessity of vertex removal]
In \cref{thm:structure}, the removal of $O_L(1)$ vectors is necessary.
This is shown in the proof of \cref{thm:detail}\ref{itm:detail-non}.
In contrast, for equiangular lines, this step is unnecessary. (See \cite[Theorem 2.1]{JTYZZ21}; also see \cite{Bal21+} for quantitative improvements specifically in the context of equiangular lines. It should be noted that the improvements in \cite{Bal21+} do not seem to apply to spherical two-distance sets or more general uniacute spherical codes.)
\end{remark}

After applying the global structure theorem, the heart of the problem then lies in analyzing the bounded degree subgraph where weights were modified. 
For equiangular lines, to prove \cref{thm:equiangular}, we use that a connected bounded degree graph has sublinear eigenvalue multiplicity~\cite{JTYZZ21}.
In this paper, we will use a slight extension of the eigenvalue multiplicity result to edge-weighted graphs. The version we need is stated below. We omit the proof as it is identical to the proof of \cite[Theorem 2.1]{JTYZZ21}.

Given an $n$-vertex edge-weighted graph $G$, we define its weighted adjacency matrix $A_G$ as a matrix whose $(i,j)$-entry is the edge-weight on edge $ij$, and zero if $ij$ is not an edge. Label its eigenvalues by $\lambda_1(A_G) \ge \cdots \ge \lambda_n(A_G)$, and we refer to $\lambda_j(A_G)$ as the \emph{$j$th eigenvalue of $G$}.

\begin{theorem}[Sublinear eigenvalue multiplicity~\cite{JTYZZ21}]\label{thm:eigen-mult} 
Given a connected edge-weighted graph on $n$ vertices with maximum degree $\Delta$ and whose edge weights lie in the interval $[1, 1+\nu]$, its $j$th eigenvalue has multiplicity at most $O_{\Delta,j,\nu}(n/\log\log n)$.
\end{theorem}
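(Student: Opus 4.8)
The statement coincides, modulo a cosmetic change, with Theorem~2.1 of \cite{JTYZZ21}: there the matrix $A_G$ is a $\set{0,1}$ adjacency matrix, whereas here its nonzero entries lie in $[1,1+\nu]$. The plan is to rerun that proof and verify that the weights cause no trouble. At a high level, that argument is \emph{local-spectral}: one fixes a scale $r\asymp\log_\Delta\log n$, exploits that in a bounded-degree graph every ball $B_r(v)$ has at most $\Delta^{r+1}=(\log n)^{O(1)}$ vertices, and shows that an eigenvector of the $j$-th largest eigenvalue $\lambda_j(A_G)$ (with $j$ bounded) cannot be ``seen'' locally to be concentrated on such a ball. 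One then argues that a connected bounded-degree graph cannot support too large a family of mutually orthogonal such eigenvectors, with the loss of a $\log\log n$ (rather than $\log n$) factor coming from the eigenvalue-spacing issue described below.

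Concretely, I would aim to bound the diagonal entry $(P_W)_{vv}=\snorm{P_We_v}^2$ of the orthogonal projection $P_W$ onto the $\lambda$-eigenspace $W$, where $\lambda=\lambda_j(A_G)$, using $\dim W=\operatorname{tr}P_W=\sum_v(P_W)_{vv}$. The tool for the bound is to approximate $P_W$ by $q(A_G)$ for a low-degree polynomial $q$: since only $\le j-1$ eigenvalues of $A_G$ lie above $\lambda$, one can take $q$ of degree $O_j(r)$ with $q(\lambda)=1$, vanishing at those finitely many larger eigenvalues, and uniformly $O(1/r)$ on the rest of the spectrum (a Chebyshev-type estimate), provided the spectrum has a modest gap just below $\lambda$. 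Because $\deg q=O(r)$, the operator $q(A_G)$ vanishes outside distance $O(r)$ of the diagonal (it is a combination of $A_G^k$ with $k=O(r)$, and $(A_G^k)_{uv}=0$ whenever $d(u,v)>k$); from this locality — together with connectedness and the maximum-degree bound, which prevent an eigenvector from persisting over a large region without either spreading or violating the eigenvalue equation — one extracts $\dim W=O(n/r)$, i.e.\ the claimed bound. The appearance of $\lambda_1,\dots,\lambda_{j-1}$ in $q$ makes an induction on $j$ natural.

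The genuine obstacle — and the real content of \cite[Theorem~2.1]{JTYZZ21} — is handling eigenvalues of $A_G$ lying anomalously close to $\lambda$ (within roughly $r^{-2}$, on either side), which would otherwise force $\deg q$ to blow up; dealing with this requires a careful quantitative spacing/stability argument and is what degrades the bound to $n/\log\log n$. Everything else passes verbatim to the weighted setting: balls of radius $r$ still contain at most $\Delta^{r+1}$ vertices; the spectrum of $A_G$ still lies in the fixed interval $[-(1+\nu)\Delta,(1+\nu)\Delta]$; a weighted count of closed walks of length $k$ from $v$ lies between the unweighted count and $(1+\nu)^k$ times it, so every moment and Chebyshev estimate survives with constants now depending also on $\nu$; and orthogonal projection is unaffected since $A_G$ stays symmetric. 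This is precisely why the argument of \cite{JTYZZ21} transfers with no essential change, and the paper is justified in omitting the details.
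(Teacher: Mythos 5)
The paper offers no argument for \cref{thm:eigen-mult} beyond the assertion that the proof of \cite[Theorem 2.1]{JTYZZ21} applies verbatim when the edge weights lie in $[1,1+\nu]$; so the substance a proof attempt can add is (i) an accurate account of that argument and (ii) a check of each weight-sensitive step. Your proposal has a genuine gap on (i). The argument of \cite{JTYZZ21} is not the projector-approximation scheme you outline: it proceeds by deleting a small set $U$ of vertices (of size roughly $n/r$ with $r\asymp \log\log n/\log\Delta$) so that every remaining vertex is within distance $r$ of a deleted one, passing the multiplicity to $G-U$ by Cauchy interlacing, and then bounding it there by trace/closed-walk estimates on balls, the quantitative engine being Perron--Frobenius-type bounds (lower bounds on Perron entries in balls of radius $r$, and the decrease in local walk counts caused by a nearby deleted vertex); the $n/\log\log n$ shape reflects the size of the deleted set, with $r$ capped by the $\Delta^{O(r)}$ losses, not an eigenvalue-spacing analysis. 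Consequently your plan --- sketch a Chebyshev-type approximation of the spectral projector, concede that eigenvalues of $A_G$ within $\sim r^{-2}$ of $\lambda_j$ defeat any degree-$O(r)$ polynomial, and declare that handling this is ``the real content of \cite{JTYZZ21}'' to be borrowed --- does not close: the cited paper contains no such spacing/stability argument to borrow, because its proof is organized around a different mechanism. Moreover, within your own framework the decisive step (``from this locality \dots one extracts $\dim W = O(n/r)$'') is asserted rather than argued; even granting the polynomial $q$, bounding $\sum_v q(A_G)_{vv}$ requires local control of the spectral measure at each vertex, which is exactly what is to be proved, and bounded-degree graphs genuinely can have many eigenvalues crowded near $\lambda_j$, so the clustering case cannot be wished away.

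The closing paragraph --- the checklist that weights in $[1,1+\nu]$ are harmless --- is the part this paper actually relies on, and your checks point in the right direction: $A_G$ remains symmetric, balls still have at most $\Delta^{r+1}$ vertices, the spectrum stays in a fixed interval, and weighted closed-walk counts are sandwiched between the unweighted count and $(1+\nu)^k$ times it. To be convincing, though, these observations must be attached to the actual weight-sensitive steps of \cite{JTYZZ21}: the Perron-entry lower bounds, the walk-count lower bounds, and the quantitative drop of the top eigenvalue upon deleting a vertex, which is precisely where the hypothesis that the weights are bounded \emph{below} by $1$ (not merely bounded above) is used, and why the theorem is stated for weights in $[1,1+\nu]$. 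As it stands, the proposal correctly identifies the right scale $r\asymp\log_\Delta\log n$ and the right transfer philosophy, but it neither reproduces the cited proof nor supplies a complete alternative one.
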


\begin{remark}[Quantitatve bounds and error terms]
The $n/\log\log n$ bound in \cref{thm:eigen-mult} is the main source of the bottleneck in the quantitative bounds in \cref{thm:equiangular}. Specifically, the proof (together with later refinements by Balla~\cite{Bal21+}) gives $d_0(\alpha) = \exp(\alpha^{-Ck})$ in \cref{thm:equiangular}\ref{itm:equiangular-a} and $O(\log(1/\alpha) d/\log\log d)$ as the $o(d)$ error term in \cref{thm:equiangular}\ref{itm:equiangular-b}. See \cite{HSSZ22} lower bound constructions of connected bounded degree graphs with second eigenvalue multiplicity on the order of $\sqrt{n/\log n}$). Also, for \cref{thm:equiangular}\ref{itm:equiangular-b}, Schildkraut~\cite{Sch23+} exhibited infinitely many $\alpha$'s where the $o(d)$ error term cannot be replaced by $o(\log\log d)$.
\end{remark}

Prior to the discovery of \cref{thm:eigen-mult}, partial progress on the equiangular lines problem relied on ad hoc spectral graph theory arguments \cite{BDKS18,JP20}. Despite the success in addressing the equiangular lines problem, a significant challenge arises when attempting to extend the solution to other uniacute spherical code problems. 
A missing ingredient appears to be the lack of a useful generalization of \cref{thm:eigen-mult} to the types of graphs that arise in \cref{thm:structure}.
Indeed, for spherical two-distance sets \cite{JTYZZ23,JP21+}, various workarounds have been used to address certain cases of the problem.
The papers \cite{JP20,JP21+} solved certain forbidden subgraph characterization problems in graphs and signed graphs, which led to partial solutions of equiangular lines and spherical two-distance sets problems.

To show our main result, \cref{thm:main}, after applying the global structure theorem to decompose the Gram graph into $p$ parts, we deduce that unless the modification consists of a near perfect matching inside each of the $p$ parts, the size of the code must be much less than $2pd$ (\cref{sec:rank-bound}).
Once we deduce the matching structure within each part, we then analyze the modifications between parts (\cref{sec:upper}).

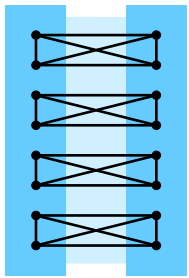
\begin{figure}[h]
\definecolor{wwccff}{rgb}{0.4,0.8,1}
\begin{tikzpicture}[scale = 0.8]
\fill[line width=0pt,color=wwccff,fill=wwccff!30] (-2.005278955937325,1.8055364510038712) -- (1,1.8) -- (1,-2.3) -- (-2,-2.3) -- cycle;

\fill[line width=1pt,color=wwccff,fill=wwccff] (-1,2) -- (-2,2) -- (-2,-2.5) -- (-1,-2.5) -- cycle;
\fill[line width=1pt,color=wwccff,fill=wwccff] (1,2) -- (0,2) -- (0,-2.5) -- (1,-2.5) -- cycle;

\draw [line width=1pt] (-1.5,1.5)-- (0.5,1);
\draw [line width=1pt] (0.5,1.5)-- (-1.5,1);
\draw [line width=1pt] (-1.5,1)-- (-1.5,1.5);
\draw [line width=1pt] (-1.5,1.5)-- (0.5,1.5);
\draw [line width=1pt] (0.5,1.5)-- (0.5,1);
\draw [line width=1pt] (0.5,1)-- (-1.5,1);
\draw [line width=1pt] (-1.5,0.5)-- (0.5,0.5);
\draw [line width=1pt] (0.5,0)-- (0.5,0.5);
\draw [line width=1pt] (-1.5,0)-- (-1.5,0.5);
\draw [line width=1pt] (-1.5,0)-- (0.5,0);
\draw [line width=1pt] (0.5,0)-- (-1.5,0.5);
\draw [line width=1pt] (-1.5,0)-- (0.5,0.5);
\draw [line width=1pt] (-1.5,-0.5)-- (0.5,-1);
\draw [line width=1pt] (0.5,-1)-- (0.5,-0.5);
\draw [line width=1pt] (0.5,-0.5)-- (-1.5,-1);
\draw [line width=1pt] (-1.5,-1)-- (-1.5,-0.5);
\draw [line width=1pt] (-1.5,-0.5)-- (0.5,-0.5);
\draw [line width=1pt] (0.5,-1)-- (-1.5,-1);
\draw [line width=1pt] (-1.5,-1.5)-- (0.5,-2);
\draw [line width=1pt] (0.5,-1.5)-- (-1.5,-2);
\draw [line width=1pt] (-1.5,-2)-- (-1.5,-1.5);
\draw [line width=1pt] (-1.5,-1.5)-- (0.5,-1.5);
\draw [line width=1pt] (0.5,-1.5)-- (0.5,-2);
\draw [line width=1pt] (0.5,-2)-- (-1.5,-2);

\draw [fill=black] (-1.5,1.5) circle (2pt);
\draw [fill=black] (-1.5,1) circle (2pt);
\draw [fill=black] (0.5,1.5) circle (2pt);
\draw [fill=black] (0.5,1) circle (2pt);
\draw [fill=black] (-1.5,0.5) circle (2pt);
\draw [fill=black] (-1.5,0) circle (2pt);
\draw [fill=black] (0.5,0.5) circle (2pt);
\draw [fill=black] (0.5,0) circle (2pt);
\draw [fill=black] (-1.5,-0.5) circle (2pt);
\draw [fill=black] (-1.5,-1) circle (2pt);
\draw [fill=black] (0.5,-0.5) circle (2pt);
\draw [fill=black] (0.5,-1) circle (2pt);
\draw [fill=black] (-1.5,-1.5) circle (2pt);
\draw [fill=black] (-1.5,-2) circle (2pt);
\draw [fill=black] (0.5,-1.5) circle (2pt);
\draw [fill=black] (0.5,-2) circle (2pt);
\end{tikzpicture}

\caption{An illustration of a modular code obtained by modifying a template on disjoint components.} \label{fig:modular}
\end{figure}

\subsection{Modularity conjecture}

One method for constructing uniacute spherical codes is to begin with the $p$-block template from the global structure theorem and then incorporate modifications that are ``modular.'' We introduce the concept of \emph{modular spherical $L$-codes}, or simply \emph{modular codes}, to describe this type of construction. 
These codes are modular in the sense that larger modular codes can be constructed by repeating small modular codes.
See \cref{fig:modular} for an illustration.

In all solved cases of the rate problem for uniacute spherical codes, such as for equiangular lines, the optimal rate can be achieved using modular codes. We hypothesize that a similar phenomenon holds for general uniacute codes. 
To formalize these notions, we formulate a \emph{modularity conjecture} in  \cref{sec:modularity}.
The modularity perspective was already helpful here in our work proving \cref{thm:main}.

\subsection{Detailed results}

We now state refined versions of \cref{thm:uniform-lim,thm:main}.

\begin{theorem}\label{thm:general-upper}
	Fix $\alpha, \beta \in (0,1)$. Let $p = \floor{\alpha/\beta} + 1$ and $\lambda = (1-\alpha)/(\alpha + \beta)$. Set $L = [-1,-\beta] \cup \{\alpha\}$. Then the maximum cardinality of an $L$-code in $\RR^d$ satisfies
\[
    N_L(d) \leq \begin{cases}
        2pd + O_L(1) & \text{ if } \lambda \geq 1,\\
        pd + o(d) & \text{ if } \lambda < 1.\\
    \end{cases}
\]

\end{theorem}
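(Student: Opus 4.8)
The plan is to combine the global structure theorem (\cref{thm:structure}) with a rank/eigenvalue‑multiplicity analysis carried out one part at a time. First, fix a large constant $\Delta = \Delta(\alpha,\beta)$, apply \cref{thm:structure}, and discard the $O_L(1)$ exceptional vectors; it then suffices to bound $N' := |V_1|+\cdots+|V_p|$. Let $M$ be the Gram matrix of the surviving code and $M_i := M[V_i]$ the principal submatrix on $V_i$. By part (b) of \cref{thm:structure}, every within‑$V_i$ edge untouched by the modification has weight exactly $\alpha$, so
\[
M_i = \bigl((1-\alpha)I - A_i\bigr) + \alpha J,
\]
where $A_i$ is the weighted adjacency matrix of the graph $H_i$ of modified within‑$V_i$ edges; $H_i$ has maximum degree at most $\Delta$ and every edge weight of $A_i$ lies in $[\alpha+\beta,\alpha+1]$ (a modified edge has weight in $[-1,-\beta]$, and we discard edges whose weight stayed $\alpha$). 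Crucially, the between‑part structure plays no role here. We extract two facts: (i) $\operatorname{rank}(M_i)\le\operatorname{rank}(M)\le d$; (ii) since $M\succeq 0$ forces $M_i\succeq 0$, for every $x\perp\mathbf{1}_{V_i}$ we have $x^\top A_i x = (1-\alpha)\|x\|^2 - x^\top M_i x \le (1-\alpha)\|x\|^2$, so by Courant–Fischer $\lambda_2(A_i)\le 1-\alpha$. Since $\alpha J$ has rank at most $1$, $\operatorname{rank}(M_i)\ge\operatorname{rank}((1-\alpha)I-A_i)-1 = |V_i| - m_i - 1$, where $m_i$ denotes the multiplicity of $1-\alpha$ as an eigenvalue of $A_i$; hence $\dim\ker M_i\le m_i+1$, and together with (i) and $|V_i|=\operatorname{rank}(M_i)+\dim\ker(M_i)$ it remains only to bound $m_i$.

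\textbf{Case $\lambda\ge 1$.} Here $1-\alpha\ge\alpha+\beta$. By Cauchy interlacing with a single edge, every non‑trivial component $C$ of $H_i$ has $\lambda_1(A_C)\ge\alpha+\beta$. At most one component $C^\ast$ can have $\lambda_1>1-\alpha$, since two of them would give $\lambda_2(A_i)>1-\alpha$, contradicting (ii). Components with $\lambda_1=1-\alpha$ each contribute at most $1$ to $m_i$ (the Perron eigenvalue of a connected non‑negative matrix is simple) and each has at least $2$ vertices (a single vertex has $\lambda_1=0\ne 1-\alpha$), so there are at most $(|V_i|-|C^\ast|)/2$ of them; components with $\lambda_1<1-\alpha$ contribute $0$. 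For $C^\ast$, if $1-\alpha$ is an eigenvalue then it equals $\lambda_2(A_{C^\ast})$ by (ii), so its contribution is at most the multiplicity of the second eigenvalue of $A_{C^\ast}$, which by \cref{thm:eigen-mult} (after rescaling the weights of $A_{C^\ast}$ into $[1,1+\nu]$ with $\nu=(1-\beta)/(\alpha+\beta)$) is $O_L\bigl(|C^\ast|/\log\log|C^\ast|\bigr)$. Now split on $|C^\ast|$: below the threshold where this bound falls below $|C^\ast|/2$ it is $O_L(1)$; above it, the contribution of $C^\ast$ is at most $|C^\ast|/2$, which is ``paid for'' by the $|C^\ast|$ vertices excluded from the count of the small components. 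In both subcases $m_i\le |V_i|/2 + O_L(1)$, hence $\dim\ker M_i\le |V_i|/2 + O_L(1)$, so $|V_i|\le 2d + O_L(1)$, and summing over the at most $p$ parts gives $N\le 2pd + O_L(1)$.

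\textbf{Case $\lambda<1$.} Here $1-\alpha<\alpha+\beta$, so by the same interlacing bound every non‑trivial component of $H_i$ has $\lambda_1>1-\alpha$; by (ii), $H_i$ therefore has at most one non‑trivial component $C^\ast$ (two would force $\lambda_2(A_i)>1-\alpha$). If there is none then $M_i=(1-\alpha)I+\alpha J$ has full rank and $|V_i|\le d$. Otherwise $m_i$ equals the multiplicity of the non‑top eigenvalue $\lambda_2(A_{C^\ast})=1-\alpha$, which \cref{thm:eigen-mult} bounds by $O_L\bigl(|C^\ast|/\log\log|C^\ast|\bigr)$. Since $N=O_L(d)$ (e.g.\ by \cref{thm:uniform-lim}, or Bukh's bound), we get $|C^\ast|\le|V_i|\le N=O_L(d)$, hence $m_i=o(d)$, so $\dim\ker M_i=o(d)$, $|V_i|\le d+o(d)$, and summing gives $N\le pd+o(d)$.

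\textbf{Main obstacle.} The delicate point is the bound on $m_i$ when $\lambda\ge 1$: \cref{thm:eigen-mult} only yields a \emph{sublinear}, not bounded, multiplicity, so obtaining the sharp error term $O_L(1)$ rather than $o(d)$ requires both the observation that at most one ``large'' component of $H_i$ can occur (itself a consequence of the PSD condition through $\lambda_2(A_i)\le 1-\alpha$) and the charging argument that absorbs that single component's sublinear multiplicity against its own vertex count. Everything else is bookkeeping around the structure theorem.
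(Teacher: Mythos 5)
Your proposal is correct and follows essentially the same route as the paper: apply the global structure theorem, then for each part bound the kernel of the Gram matrix via positive semidefiniteness on $\mathbf{1}^\perp$ (giving $\lambda_2(A_i)\le 1-\alpha$), Perron--Frobenius simplicity on connected components of size at least two, and the sublinear multiplicity bound of \cref{thm:eigen-mult} for the one possible component with top eigenvalue exceeding $1-\alpha$. Your componentwise count of the multiplicity of $1-\alpha$ is an inline re-derivation of the paper's \cref{prop:rank-bound} together with \cref{prop:a-singular} (your components with $\lambda_1=1-\alpha$ are exactly the paper's $\alpha$-singular components after rescaling), so the differences are only in packaging, not substance.
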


\begin{theorem} \label{thm:detail}
Let $L = [-1, -\beta] \cup \{\alpha\}$ with $\alpha,\beta \in (0,1)$. 
Let 
\[
p = \floor{\frac{\alpha}{\beta}} + 1
\qquad 
\text{and} \qquad
\lambda = \frac{1-\alpha}{\alpha +\beta}.
\]
\begin{enumerate}[(a)]
    \item \label{itm:detail-1}
    Suppose $\alpha/\beta = 1$. 
        \begin{enumerate}[(i)]
            \item \label{itm:detail-1int} If $\lambda^2 \in \ZZ$, then there exist positive integers $d_0 = d_0(\alpha)$ and $m = m(\alpha)$ such that for all $d \ge d_0$,
            \[
            N_L(d) \le 4(d-1),
            \]
            and, whenever $d-1$ is divisible by $m$,
            \[
            N_L(d) \ge 4(d-1).
            \]
            \item \label{itm:detail-1non} If $\lambda^2 \notin \ZZ$, then 
            \[N_L(d) \le 11d/3 + O_L(1).\]
        \end{enumerate}
    \item \label{itm:detail-2} Suppose $\alpha/\beta \in \ZZ$ and $\alpha/\beta \geq 2$.
        \begin{enumerate}[(i)]
            \item \label{itm:detail-2int}
                If $\lambda \in \ZZ$, then there exists a positive integer $m = m(\alpha, \beta)$ such that
                \[
                N_L(d) \le 2pd + O_L(1),
                \]
                and, whenever $d-p+1$ is divisible by $m$,
                \[
                N_L(d) \ge 2p(d-p+1).
                \]
            \item \label{itm:detail-2non} If $\lambda \notin \ZZ$, then 
            \[
            N_L(d) \le \left(2p - \frac{1}{2}\right)d + O_L(1).
            \]
        \end{enumerate}
    \item  \label{itm:detail-non} 
    Suppose $\alpha / \beta \notin \ZZ$. Then
    \[
        N_L(d) = \begin{cases}
                2pd + O_L(1) & \text{ if } \lambda \geq 1,\\
                pd + o(d) & \text{ if } \lambda < 1. \\
                 \end{cases}
    \]
    \item \label{itm:error-term} For all integers $p$ and $K$ there exists $\varepsilon$ such that whenever $p-\varepsilon < \alpha/\beta < p$, there exists $d = d_0(\alpha,\beta)$ such that for all $d \ge d_0$,
            \[
            N_L(d) \ge 2pd + K.
            \]
\end{enumerate}
\end{theorem}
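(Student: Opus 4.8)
The plan is to exhibit, for all large $d$, an explicit spherical $L$-code in $\RR^d$ of cardinality $2pd+K$, which is all that is required. Two observations set the stage. First, $p-\varepsilon<\alpha/\beta<p$ forces $\floor{\alpha/\beta}=p-1$, so the parameter really is $p$, and it makes $\alpha/(p-1)$ exceed $\beta$ by a definite amount, so an inter-cluster inner product may be chosen anywhere in the interval $[-\alpha/(p-1),\,-\beta]$, which has positive length. Second, the relevant regime is $\lambda\ge 1$: by \cref{thm:general-upper} and \cref{thm:detail}\ref{itm:detail-non} the rate drops to $p$ when $\lambda<1$, so the asserted bound is meaningful only when $\lambda\ge1$, which we henceforth assume. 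The construction then consists of a standard rate-$2p$ modular code, into which a bounded ``surplus gadget'' carrying $K$ extra vectors has been inserted.

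I would begin from the modular construction behind \cref{thm:detail}\ref{itm:detail-non} and \cref{thm:detail}\ref{itm:detail-2int}: an $L$-code supported on $p$ clusters with every intra-cluster inner product $\alpha$, every inter-cluster inner product equal to the extreme value $-\alpha/(p-1)$, a perfect matching of weight $2\alpha-1\in[-1,-\beta]$ (admissible precisely because $\lambda\ge 1$) inside each cluster, and perfect matchings of weights $\alpha$ and $-1$ between clusters — a pattern that, stacked along the directions of a regular $(p-1)$-simplex, yields $2pd+O_L(1)$ vectors in $\RR^d$; I would record the $O_L(1)$ offset explicitly. Next I would construct the surplus gadget: a spherical $L$-code in a fixed dimension $r_0=r_0(\alpha,\beta)$ with at least $2pr_0+K$ more vectors than needed to compensate that offset. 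Its existence exploits the slack in $[-\alpha/(p-1),-\beta]$: displacing the inter-cluster inner products off the extreme endpoint perturbs the spectrum of the $(1-\alpha)I+\alpha J$-type Gram matrix — already modified on bounded-degree graphs within and between clusters — in a controlled manner that, once $\alpha/\beta$ is close enough to $p$, opens up exactly enough room to append the desired number of unit vectors while keeping all pairwise inner products in $L$; this is the step that forces $\varepsilon$ to be small in terms of $K$. Finally I would glue: place gadget and stack in complementary subspaces sharing a single auxiliary direction, tilted in opposite senses, so that every gadget--stack inner product is a fixed value in $[-1,-\beta]$ (available since $\alpha\ge\beta$) while, with a small enough tilt, no intra-piece inner product leaves $L$. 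The totals add to $2pd+K$, valid for every $d\ge d_0(\alpha,\beta)$ once bricks of coprime dimensions are used to reach all residue classes.

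The crux — and the main obstacle — is the surplus gadget together with its compatibility with gluing. One must verify at once that (i) after appending the extra vectors the perturbed Gram matrix is positive semidefinite of the intended rank, so that it really is the Gram matrix of a spherical $L$-code with that many vectors, all pairwise inner products lying in $L$; (ii) the gluing tilt preserves membership of every intra-piece inner product in $L$, which fixes the admissible tilt angle in terms of $\lambda$; and (iii) every cross-piece inner product falls in $[-1,-\beta]$, which uses $\alpha\ge\beta$ together with the freedom in the inter-cluster values. Quantifying how the number of appendable surplus vectors grows as $\alpha/\beta\to p$ — hence how small $\varepsilon$ must be taken as a function of $K$ — is the technical heart; the rest (absorbing the bounded error terms, covering all residues of $d$) is bookkeeping.
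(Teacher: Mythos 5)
Your proposal addresses only part \ref{itm:error-term} of \cref{thm:detail}: you construct (in outline) codes of size $2pd+K$ when $\alpha/\beta$ is just below $p$. That leaves untouched the bulk of the theorem, namely all of the upper bounds in \ref{itm:detail-1}, \ref{itm:detail-2} and \ref{itm:detail-non} (e.g.\ $N_L(d)\le 4(d-1)$, $N_L(d)\le 11d/3+O_L(1)$, $N_L(d)\le(2p-\tfrac12)d+O_L(1)$, and the $pd+o(d)$ bound when $\lambda<1$), as well as the matching lower-bound constructions in \ref{itm:detail-1}\ref{itm:detail-1int}, \ref{itm:detail-2}\ref{itm:detail-2int} and \ref{itm:detail-non}. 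In the paper these upper bounds are the heart of the matter: they require the global structure theorem (\cref{thm:structure}, \cref{cor:global}), the rank bound within each part via sublinear eigenvalue multiplicity (\cref{prop:rank-bound}, \cref{cor:rank-appl}), the analysis of $\alpha$-singular matched pairs and their common midpoints (\cref{lem:a-singular-edges}), and the integrality arguments on $\lambda$ and $\lambda^2$ applied to the integer matrix $Q$ built from the cross-matchings (\cref{prop:algebraic-upper}, \cref{prop:upper}). None of this appears in your write-up, so as a proof of the stated theorem it has a decisive gap.

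Even restricted to \ref{itm:error-term}, the argument is not complete where it matters. You correctly identify the slack $-\alpha/(p-\varepsilon)<-\beta$ and the regime $\lambda\ge1$, but the ``surplus gadget'' and the ``glue by tilting in complementary subspaces'' steps are exactly the points you defer, and the tilting step as described is doubtful: tilting two orthogonal configurations toward a shared direction does not generically make every cross inner product equal to a single admissible value in $[-1,-\beta]$ while keeping all intra-piece products in $L$; controlling this requires precisely the kind of global positive-semidefiniteness bookkeeping you omit. The paper's \cref{prop:lower-extra} avoids gluing altogether: it builds one Gram matrix $M=Q+T+E$, where $T=T_{\alpha,\gamma}(p+1,s)$ is a $(p+1)$-block template whose extra block has bounded size $K=M+2p(p+2)$, $\gamma=\alpha/(p-\varepsilon)$, the deficiency of $T$ (it is not PSD since $1+\alpha/\gamma<p+1$) is repaired by a rank-one-type correction $E=\varepsilon\gamma(p+1)J_K$ supported on the small block, $Q$ comes from random orthogonal matrices (\cref{lem:matrix-copies}) with a zero-padded factor $B_r$ chosen so that the small block's diagonal works out to $(1-\alpha)I_K+\alpha J_K$, and $\varepsilon=1/(pK+K)$ is what makes the numbers close; positive semidefiniteness and membership of all entries in $L$ are then checked directly. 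If you want to salvage your route, you would need to replace the tilt-and-glue step by such an explicit joint Gram-matrix construction and quantify $\varepsilon$ as a function of $K$ — and, separately, supply proofs for all the other parts of the theorem.
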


The proofs are given as follows.

Upper bounds:
\begin{enumerate}
\item[\ref{itm:detail-1}]
\ref{itm:detail-1int} \cref{prop:upper} \quad 
\ref{itm:detail-1non} \cref{prop:algebraic-upper}\ref{itm:algebraic-upper-2}
\item [\ref{itm:detail-2}]
\ref{itm:detail-2int} \cref{thm:general-upper} \quad 
\ref{itm:detail-2non} \cref{prop:algebraic-upper}\ref{itm:algebraic-upper-p}
\item [\ref{itm:detail-non}]  \cref{thm:general-upper}
\end{enumerate}

Lower bounds:
\begin{enumerate}
\item[\ref{itm:detail-1}]
\ref{itm:detail-1int} \cref{prop:lower}\ref{itm:lower-2p}\ref{itm:lower-4}

\item [\ref{itm:detail-2}]
\ref{itm:detail-2int} \cref{prop:lower}\ref{itm:lower-2p}\ref{itm:lower-int-2p}

\item [\ref{itm:detail-non}]  \cref{prop:lower}\ref{itm:lower-nint-p} and \cref{prop:lower}\ref{itm:lower-2p}\ref{itm:lower-nint-2p}
\item [\ref{itm:error-term}] \cref{prop:lower-extra}.
\end{enumerate}

\begin{remark}
In \ref{itm:error-term}, we see that the $O_L(1)$ term in the uniform bound $N_L(d) \le 2pd + O_L(1)$ needs to be large when $\alpha/\beta$ falls in the upper end of its allowed interval $[p-1,p)$. 
The construction showing \ref{itm:error-term} also shows that in the global structure theorem, \cref{thm:structure}, it is sometimes necessary to remove a large number of vertices before partitioning the vertex set into $p$ parts.
\end{remark}

\subsection{Further directions.}
This paper makes progress on the rate problem for uniacute spherical codes (\cref{prob:rate}) by characterizing all $L = [-1,-\beta]\cup\set{\alpha}$ where the uniform bound \cref{thm:uniform-lim} is tight. 
It remains open to determine the rate in other cases.

Our proposed modularity conjecture, discussed in \cref{sec:modularity}, provides a characterization of the rate.
Ideas from \cite{BDKS18,JP20,JP21+,JTYZZ23} may be helpful in establishing more cases, although a full solution may hinge on a proper generalization of \cref{thm:eigen-mult}.

Balla, Dr\"{a}xler, Keevash, and Sudakov \cite{BDKS18} showed that for $L = [-1, -\beta] \cup \{\alpha_1, \cdots, \alpha_k\}$ with $0 < \beta \leq 1$ and $0 \leq \alpha_1 < \cdots < \alpha_k < 1$,
\[
\limsup_{d \to \infty} \frac{N_L(d)}{d^k} \leq 2^k(k-1)!\paren{1 + \floor{\frac{\alpha_1}{\beta}}}
\]
and, for each $k$, there exists $L$ with a positive limsup.
There is much more work to do to better understand these ``$k$-acute spherical codes'' or ``multiacute codes.''

Another related direction concerns replacing real unit vectors by complex unit vectors. This is related to complex equiangular lines. 
Balla~\cite{Bal21+} recently showed that the maximum number of complex unit vectors in $\CC^d$ whose pairwise inner product has magnitude $\alpha$ is $O_\alpha(d)$. 
A related problem concerns configurations of subspaces with prescribed angles~\cite{LS73subspace,BS19}.

\section{Modularity}\label{sec:modularity}

To motivate the discussion in this section, we first consider two example constructions. Instead of dealing with spherical codes explicitly, \cref{lem:gram} will allow us to work with Gram matrices instead. 

\begin{lemma}\label{lem:gram}
	Let $L \subseteq [-1,1)$. There exists a spherical $L$-code of size $n$ in $\RR^d$ if and only if there exists an $n \times n$ positive semidefinite matrix of rank at most $d$ with all diagonal entries being $1$ and all non-diagonal entries belonging to $L$.
\end{lemma}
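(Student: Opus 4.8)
The plan is to use the standard correspondence between configurations of vectors and their Gram matrices, establishing the two directions of the equivalence separately. For the forward direction, suppose $v_1, \dots, v_n \in \RR^d$ is a spherical $L$-code, and let $V$ be the $d \times n$ matrix whose $i$th column is $v_i$. I would set $M = V^{\mathsf T} V$, so that $M_{ij} = \ang{v_i, v_j}$. Then $M$ is positive semidefinite (it is of the form $V^{\mathsf T} V$), its rank equals $\operatorname{rank}(V) \le d$, its diagonal entries are $\snorm{v_i}^2 = 1$, and its off-diagonal entries $\ang{v_i, v_j}$ lie in $L$ by hypothesis. This gives a matrix of the desired form.

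For the reverse direction, suppose $M$ is an $n \times n$ positive semidefinite matrix of rank $r \le d$ with unit diagonal and off-diagonal entries in $L$. I would diagonalize $M = Q \Lambda Q^{\mathsf T}$ with $Q$ orthogonal and $\Lambda$ diagonal with nonnegative entries, exactly $r$ of which are positive; writing $\Lambda^{1/2}$ for the entrywise nonnegative square root and discarding the zero rows, one obtains an $r \times n$ matrix $U$ with $U^{\mathsf T} U = M$. Viewing $\RR^r$ as a subspace of $\RR^d$, the columns $u_1, \dots, u_n$ of $U$ are vectors in $\RR^d$ with $\ang{u_i, u_j} = M_{ij}$; in particular $\snorm{u_i}^2 = M_{ii} = 1$, so they are unit vectors, and their pairwise inner products $M_{ij}$ lie in $L$. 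Hence $u_1, \dots, u_n$ is a spherical $L$-code of size $n$ in $\RR^d$.

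This argument is entirely routine, so there is no real obstacle; the only nontrivial input is the spectral factorization of a positive semidefinite matrix into $U^{\mathsf T} U$ with the row count of $U$ equal to the rank, which is classical linear algebra. I would just need to be slightly careful that "rank at most $d$" is matched by embedding $\RR^r$ into $\RR^d$ (padding with zero coordinates) rather than requiring equality, and that the correspondence is genuinely a bijection up to orthogonal transformations, though only the existence statement is needed here.
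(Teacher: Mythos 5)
Your proposal is correct and follows essentially the same route as the paper: the forward direction takes the Gram matrix, and the reverse direction factors the PSD matrix as $U^{\mathsf T}U$ and reads off unit vectors (the paper simply cites Cholesky decomposition where you use the spectral square root, which is an immaterial difference). No issues.
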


\begin{proof}
	Given a spherical $L$-code of size $n$ in $\RR^d$, its Gram matrix is an $n\times n$ positive semidefinite matrix of rank at most $d$ with all diagonal entries being $1$ and  all non-diagonal entries being in $L$. On the other hand, given such a matrix, Cholesky decomposition recovers the spherical $L$-code.
\end{proof}

\begin{example}\label{ex:equiangular-construction}

Lemmens and Seidel~\cite{LS73} showed that $N_{\{\pm 1/3\}}(d) = 2(d-1)$ for all sufficiently large dimensions $d$. Our first example will be a tight lower bound construction for $N_{\{\pm 1/3\}}(d)$ for large $d$. 

	Let $L = \{\pm 1/3\}$. Observe that by \cref{lem:gram}, there is a spherical $L$-code of size two in $\RR^2$ with the Gram matrix 
\[
	\begin{pmatrix}
		1 &  -1/3 \\
		-1/3 & 1 \\
	\end{pmatrix} = \frac{1}{3}\begin{pmatrix}
						 1 &  1 \\
						1 & 1 
						\end{pmatrix} + \frac{2}{3}\begin{pmatrix}
													 \phantom{+}1 &  -1 \\
													-1 & \phantom{+}1 
													\end{pmatrix}.
\]
Note also that the matrix
\[
	P = \begin{pmatrix}
		 \phantom{+}1 &  -1 \\
		-1 & \phantom{+}1 
		\end{pmatrix}
\]
is positive semidefinite matrix of rank $1$. For $d \geq 2$ we can chain together $d-1$ copies of $P$ to form positive semidefinite matrix $Q$ of rank $(d-1)$ with block form  
\[
	Q = \frac{2}{3}\begin{pmatrix}
		  P &  &  &  \\
		   & P &  &  \\
		   &  & \ddots &  \\
		   &  &  & P \\
		    	\end{pmatrix}.
\]
Let $J$ be the $2(d-1) \times 2(d-1)$ all-ones matrix and 
\[
	T = \frac{1}{3} J  \quad \text{ and } \quad M = T + Q.
\]
Since $M$ is sum of positive semidefinite matrices, it is positive semidefinite. Moreover, by sub-additivity of rank, $M$ is a $2(d-1) \times 2(d-1)$  matrix of rank at most $d$. Entries of $M$ are
\[
M = \begin{pmatrix}
    \begin{array}{cccc}
       \begin{matrix}
 		 1 & -1/3 \\
  		-1/3 & 1
  \end{matrix} & \begin{matrix}
  					1/3 & \phantom{-}1/3 \\
  					1/3 & \phantom{-}1/3
  				\end{matrix} & \cdots & \begin{matrix}
 										 1/3 & \phantom{-}1/3 \\
  										1/3 & \phantom{-}1/3
 										 \end{matrix} \\
  \begin{matrix}
  1/3 & \phantom{-}1/3 \\
  1/3 & \phantom{-}1/3
  \end{matrix} & \begin{matrix}
 				 1 & -1/3 \\
 				 -1/3 & 1
 				 \end{matrix} & \cdots & \begin{matrix}
  										1/3 & \phantom{-}1/3 \\
  										1/3 & \phantom{-}1/3
  										\end{matrix} \\
  \vdots & \vdots & \ddots & \vdots \\
 
   \begin{matrix}
  1/3 & \phantom{-}1/3 \\
  1/3 & \phantom{-}1/3
  \end{matrix} & \begin{matrix}
 				 1/3 & \phantom{-}1/3 \\
 				 1/3 & \phantom{-}1/3
 				 \end{matrix} & \cdots & \begin{matrix}
  										1 & -1/3 \\
  										-1/3 & 1
  										\end{matrix} \\		
    \end{array}
\end{pmatrix}.
\]
The diagonal entries of $M$ are all $1$ and non-diagonal entries all lie in $\{\pm 1/3\}$. Hence, by \cref{lem:gram} there is a spherical $\{\pm1/3\}$-code of size $2(d-1)$ in $\RR^d$. 

\medskip

Observe that nearly all entries of $M$ coincide with those of the low rank matrix $T$. In this sense, one can think of $T$ as a template. Observe also that the difference of the Gram matrix and the template $M - T = Q$ is a positive semidefinite matrix. 
\end{example}

\begin{example}\label{ex:uniacute-construction}

\cref{thm:detail} implies that the maximum cardinality of a spherical $[-1, -1/3] \cup \{1/3\}$-code is exactly $4(d - 1)$ for all sufficiently large dimensions $d$. Our second example will be the sharp construction of spherical $[-1, -1/3] \cup \{1/3\}$-codes for large $d$.

	By \cref{lem:gram}, there is a spherical $[-1, -1/3] \cup \{1/3\}$-code of size 4 in $\RR^2$ with Gram matrix
	\[
		\begin{pmatrix}
		1 & -1/3 & -1 & 1/3 \\
		-1/3 & 1 & 1/3 & -1 \\
		-1 & 1/3 & 1 & -1/3 \\
		1/3 & -1 & -1/3 & 1
	\end{pmatrix}
		=
		\frac{1}{3}
			\begin{pmatrix}
			\phantom{+}1 & \phantom{+}1 & -1 & -1 \\
			\phantom{+}1 & \phantom{+}1 & -1 & -1 \\
			-1 & -1 & \phantom{+}1 & \phantom{+}1 \\
			-1 & -1 & \phantom{+}1 & \phantom{+}1
			\end{pmatrix}
		+
		\frac{2}{3}
			\begin{pmatrix}
			\phantom{+}1 & -1 & -1 & \phantom{+}1 \\
			-1 & \phantom{+}1 & \phantom{+}1 & -1 \\
			-1 & \phantom{+}1 & \phantom{+}1 & -1 \\
			\phantom{+}1 & -1 & -1 & \phantom{+}1
		\end{pmatrix}.
\]
Note also that the matrix 
\[
\begin{pmatrix}
	P_{11} & P_{12} \\
	P_{21} & P_{22}
\end{pmatrix}
		=
		\begin{pmatrix}
		\begin{array}{cc|cc}
			\phantom{+}1 & -1 & -1 & \phantom{+}1 \\
			-1 & \phantom{+}1 & \phantom{+}1 & -1 \\
			\hline
			-1 & \phantom{+}1 & \phantom{+}1 & -1 \\
			\phantom{+}1 & -1 & -1 & \phantom{+}1
		\end{array}
\end{pmatrix}
\]
is positive semidefinite matrix of rank $1$. For $d \geq 2$ we can chain together $(d-1)$ copies of the matrix to form positive semidefinite matrix $Q$ of rank $(d-1)$
\[
	Q = \frac{2}{3}\begin{pmatrix}
	\begin{array}{c|c}
		\begin{array}{ccc}
			P_{11} &  &    \\
			 &  \ddots &  \\
			 &   & P_{11}
		\end{array}
		&
		\begin{array}{ccc}
			P_{12} &   &  \\
			 &   \ddots &  \\
			 &    & P_{12}
		\end{array}
		\\
		\hline
		\begin{array}{ccc}
			P_{21} &    &  \\
			 &   \ddots &  \\
			 &   & P_{21}
		\end{array}
		&
		\begin{array}{ccc}
			P_{22} &  &    \\
			 &   \ddots &  \\
			 &    & P_{22}
		\end{array}
	\end{array}
\end{pmatrix}.
\]
Let  $J$ be $2(d-1)\times 2(d-1)$ all-ones matrix and define $T$ and $M$ by 
\[
	T = \frac{1}{3}\begin{pmatrix}
		\phantom{+}J & -J \\
		-J & \phantom{+}J \\
	\end{pmatrix} \quad \text{ and } \quad M = T + Q.
\]
Since $M$ is a sum of positive semidefinite matrices, it is positive semidefinite. Moreover, by subadditivity of rank, $M$ is a $4(d-1) \times 4(d-1)$ matrix of rank at most $d$. Finally, diagonal entries of $M$ are all $1$ and non-diagonal entries are in $\{-1, \pm 1/3\}$. Hence, by \cref{lem:gram}, there exists a spherical $[-1, -1/3]\cup \{1/3\}$-code of size $4(d-1)$ in $\RR^d$.

Observe also that the majority of entries of $M$ agree with those of the matrix $T$. Once again, one can think of the matrix $T$ as a template and note that the difference of the Gram matrix and the template $M -T = Q$ is a positive semidefinite matrix.
\end{example}

  Bearing these examples in mind, we will define modular codes.

\begin{definition}[Template]\label{def:template}
Let $\alpha, \beta \in (0,1)$ and $L \subseteq [-1,-\beta]\cup\{\alpha\}$ with $\alpha \in L$. We say that a square matrix $T$ is a \textit{template (with respect to $L$)} if it has a block form
	\[
		T = \begin{pmatrix}
    T_{11} & T_{12} & \dots & T_{1m} \\
    T_{21} & T_{22} & \dots & T_{2m} \\
    \vdots & \vdots & \ddots & \vdots \\
    T_{m1} & T_{m 2} & \dots & T_{mm}
		\end{pmatrix}
	\]
	satisfying the following properties:
	\begin{enumerate}[label=(\roman*)]
		\item $T$ is positive semidefinite;
		\item For each $1 \leq i \leq m$, all entries of $T_{ii}$ are equal to $\alpha$;
		\item For each $1 \leq i < j \leq m$, all entries of $T_{ij}$ are equal to $\gamma_{ij}$ for some $\gamma_{ij} \in L \setminus \{\alpha\}$.
	\end{enumerate}
\end{definition}

\begin{definition}[Modular code]\label{def:modular}
Let $\alpha, \beta \in (0,1)$ and $L \subseteq [-1,-\beta]\cup\{\alpha\}$ with $\alpha \in L$.
	We say that an $L$-code $C$ is \textit{modular} if there exists a template $T$ with respect to $L$ such that $M_C - T$ is positive semidefinite, where $M_C$ is the Gram matrix of possibly reordered $C$. 

 We denote $N_L^{mod}(d)$ to be the maximum size of a modular $L$-code in $\RR^d$.
\end{definition}

\cref{ex:equiangular-construction} is a construction of a modular $\{\pm 1/3\}$-code and \cref{ex:uniacute-construction} is a construction of a modular $[-1,-1/3]\cup\{1/3\}$-code. In fact, all uniacute spherical codes in high dimensions that are known to be tight arise from modular ones. Motivated by this, we form the following conjecture.

\begin{conjecture}[Modularity]\label{conj:modularity}
Let $\alpha,\beta \in (0,1)$ and $L \subseteq [-1,-\beta]\cup\{\alpha\}$ with $\alpha \in L$. Then
\[
		N_L(d) = N_L^{mod}(d) + o(d) \text{ as } d \to \infty.
\]
\end{conjecture}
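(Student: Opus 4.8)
The inequality $N_L^{mod}(d) \le N_L(d)$ is immediate, since a modular code is in particular an $L$-code, so the content of the conjecture is the reverse bound $N_L(d) \le N_L^{mod}(d) + o(d)$. The plan is to start from a largest $L$-code $C \subseteq \RR^d$, say $\abs{C} = N_L(d) = n$ with Gram matrix $M_C$, delete $o(d)$ of its vectors, and then perturb the Gram matrix of what remains so that it becomes $T + Q$ for a template $T$ (in the sense of \cref{def:template}) and a positive semidefinite $Q$ of small enough rank, exactly as in \cref{ex:equiangular-construction,ex:uniacute-construction}; via \cref{lem:gram}, and since $N_L^{mod}$ is at most linear, this yields $N_L^{mod}(d) \ge n - o(d)$.

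First I would apply the global structure theorem (\cref{thm:structure}) with a parameter $\Delta = \Delta(d) \to \infty$ growing slowly enough that $\Delta^2 = o(d)$ and $K\Delta^{-1/2} = o(1)$ (e.g.\ $\Delta = \floor{d^{1/3}}$). After discarding the at most $\Delta^2 = o(d)$ exceptional vectors, $M_C = I + A_{G_0} + E$, where $A_{G_0}$ is the weighted adjacency matrix of a graph with the clean $p$-block structure of \cref{thm:structure} (weight $\alpha$ inside each part $V_i$, weight within $o(1)$ of some $\gamma_{ij} \in L \setminus \{\alpha\}$ between $V_i$ and $V_j$), and $E$ is supported on a graph of maximum degree $O(\Delta)$, so $\norm{E}_{\mathrm{op}} = O(\Delta) = o(\sqrt{d})$. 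Writing $T$ for the exact block template with $T_{ii} = \alpha J$ and $T_{ij} = \gamma_{ij} J$, and using $\bigoplus_i I_{V_i} = I$, one gets $M_C - T = (1-\alpha) I + F + E$, where $F$ collects the between-part fluctuations and is $o(1)$ entrywise (and, when $L \cap [-1,-\beta]$ is finite, is identically zero for all large $d$).

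The crux is then to show that, after deleting a further $o(d)$ vectors, the \emph{modification graph} — the support of $E$ together with the between-part edges whose weight differs from $\gamma_{ij}$ — breaks into connected components all but $o(d)$ of whose vertices lie in bounded-size components on which the corresponding principal submatrix of $(1-\alpha)I + F + E$ is positive semidefinite and sufficiently rank-deficient; for such a configuration $M_C$ restricted to the surviving vectors is already the Gram matrix of a modular code, with $Q = M_C - T$ a block-diagonal direct sum of $O(1)$-sized positive semidefinite blocks and hence of rank at most $d$. One would prove this by a dichotomy: a connected component that is large, or on which the local matrix fails this property, forces the code to be suboptimal. The mechanism is a multiplicity bound: since $M_C \succeq 0$ has rank $\le d$, the matrix $T + F + E = M_C - (1-\alpha)I$ has $-(1-\alpha)$ as an eigenvalue of multiplicity $\ge n - d$; because $T$ has rank $\le p = O(1)$ and $F$ is negligible, essentially all of this multiplicity must be carried by the bounded-degree part $E$, and an edge-weighted refinement of the forbidden-configuration / eigenvalue-multiplicity analysis behind \cref{thm:equiangular,thm:eigen-mult} should both cap the component sizes and restrict their possible isomorphism types — all of which one would then verify are modular, as already happens for equiangular lines (\cref{ex:equiangular-construction}) and for $[-1,-1/3] \cup \{1/3\}$ (\cref{ex:uniacute-construction}).

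I expect this last step to be the main obstacle, and it is essentially the open problem flagged in the introduction: \cref{thm:eigen-mult} bounds the multiplicity of the $j$th eigenvalue of a connected \emph{positively} weighted bounded-degree graph, whereas here $E$ is signed and $-(1-\alpha)$ sits in the interior of the spectrum of $T + F + E$ rather than at its extremes. Classifying the connected bounded-degree signed graphs that realize a prescribed interior eigenvalue with linear multiplicity, and checking that each yields a modular local Gram matrix, is exactly the generalization of \cref{thm:eigen-mult} that is currently out of reach; ad hoc results in the spirit of \cite{BDKS18,JP20,JP21+,JTYZZ23} would presumably handle further special cases. A secondary difficulty is that when $L \cap [-1,-\beta]$ is infinite the between-part weights need not all equal a single $\gamma_{ij}$ even after \cref{thm:structure} (so $F \ne 0$ and $\norm{F}_{\mathrm{op}}$ need not be $o(d)$), and one must separately argue that a near-optimal code can be perturbed to make these weights exactly constant on each part-pair at the cost of only $o(d)$ vectors — something handled by hand in every case where the rate is presently known (\cref{thm:equiangular,thm:detail}), for which a uniform argument would be needed.
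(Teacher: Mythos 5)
What you have written is not a proof, and it could not be: the statement you were asked to prove is \cref{conj:modularity}, which the paper states as an open conjecture and does not prove in general. The paper only establishes it in special cases --- for $L=[-1,-\beta]\cup\{\alpha\}$ when the uniform bound is tight --- and it does so by a different route than the one you outline: it never shows that an extremal code is itself (approximately) modular, but instead proves an upper bound on $N_L(d)$ via \cref{thm:structure}, \cref{prop:rank-bound} and the $\alpha$-singular matching analysis of \cref{sec:rank-bound,sec:upper}, and separately exhibits explicit modular constructions (\cref{prop:lower}) whose size matches that bound, so that $N_L(d)$ and $N_L^{mod}(d)$ agree up to the stated error. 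Your plan, by contrast, is to take a largest $L$-code, delete $o(d)$ vectors, and deform its Gram matrix into ``template plus PSD''; this is a strictly stronger statement than the conjecture and is not carried out anywhere in the paper, even in the solved cases.

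The concrete gap in your proposal is the step you yourself flag as the crux: after applying \cref{thm:structure}, you need to control the signed, bounded-degree modification graph and show that (up to $o(d)$ vertices) its components are bounded, of restricted isomorphism type, and locally dominate the template. The only tool of this kind available is \cref{thm:eigen-mult}, which applies to connected graphs with weights in $[1,1+\nu]$ and to extreme (ordered) eigenvalues; here the perturbation is signed and the relevant eigenvalue $-(1-\alpha)$ of $M_C-(1-\alpha)I$ sits in the interior of the spectrum, and no such generalization is known --- the paper explicitly identifies this missing ingredient as the obstacle to resolving \cref{prob:rate} in general. Since you concede this step is ``currently out of reach,'' the proposal is a research program rather than a proof, and it also quietly assumes a classification of all linear-multiplicity local configurations as modular, which is an additional unproved claim (note too that in \cref{def:modular} one only needs $M_C-T\succeq 0$, so your rank bookkeeping for $Q$ is beside the point, while the genuinely hard positivity statement is left unestablished). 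In short: the statement is open, the paper has no proof for you to have matched, and your sketch does not close the gap.
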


Our modularity conjecture is known to hold in many special cases, namely for equiangular lines $L = \set{\pm \alpha}$ \cite{JTYZZ21}, certain cases of the spherical two-distance set problem $L = \set{-\beta, \alpha}$ \cite{JTYZZ23, JP21+}, as well as for $L = [-1,-\beta] \cup \set{\alpha}$ when the uniform bound, \cref{thm:uniform-lim}, is tight, as we prove in this paper.

For equiangular lines, \cref{thm:equiangular}, in case \ref{itm:equiangular-a} where $k < \infty$, we have a stronger conclusion $N_L(d) = N_L^{mod}(d)$ for all sufficiently large $d$.
On the other hand, in case \ref{itm:equiangular-b} where $k = \infty$, the $N_L(d) - N_L^{mod}(d) = o(d)$ error term is $O(d/\log\log d)$, and Schildkraut \cite{Sch23+} proved that there exist infinitely many $\alpha$'s for which $N_{\{\pm \alpha\}}^{mod}(d) = d-1$ and $N_{\{\pm \alpha\}}(d) - N_{\{\pm \alpha\}}^{mod}(d) \ge  \Omega_\alpha(\log \log d)$. As a result, we cannot replace the $o(d)$ term in the modularity conjecture by $O_L(1)$.

In this paper, we prove the modularity conjecture for $L = [-1,-\beta]\cup\set{\alpha}$ in cases where the uniform bound (\cref{thm:uniform-lim}) is tight. In some instances, such as \cref{thm:detail}\ref{itm:detail-1}\ref{itm:detail-1int}, the error term is zero for infinitely many $d$. In other cases of \cref{thm:detail}, the error term is $O_L(1)$, but this error term can be arbitrarily large as a function of $L$, as seen in \cref{thm:detail}\ref{itm:error-term}.

\section{Lower Bound Constructions}\label{sec:lower}

In this section we will prove the lower bounds of \cref{thm:detail}.

\begin{proposition}\label{prop:lower}
    Fix $\alpha, \beta \in (0,1)$. Let $\lambda = (1-\alpha)/(\alpha + \beta)$ and $p = \lfloor \alpha/\beta \rfloor + 1$. Let $L = [-1,-\beta]\cup \{\alpha\}$. Then, there exist $m = m(\alpha, \beta)$ and $d_0 = d_0(\alpha, \beta)$ such that
    \begin{enumerate}[(a)]
        \item \label{itm:lower-nint-p} $N_L(d) \geq p(d-p+1)$ whenever $\alpha/\beta \notin \ZZ$ and $d \geq d_0$; 
        \item \label{itm:lower-2p}  $N_L(d) \geq 2p(d-p+1)$ if any of the following hold: 
        \begin{enumerate}[(i)]
            \item \label{itm:lower-nint-2p} $\alpha/\beta \notin \ZZ$ and $\lambda \geq 1$ and $d \geq d_0$,
            \item \label{itm:lower-int-2p}  $\alpha/\beta \in \ZZ$ and $\alpha/\beta \geq 2$ and $\lambda \in \ZZ$ and $d-p+1$ is divisible by $m$,
            \item \label{itm:lower-4} $\alpha/\beta = 1$ and $\lambda^2 \in \ZZ$ and $d-p+1$ is divisible by $m$.
        \end{enumerate}
    \end{enumerate}
\end{proposition}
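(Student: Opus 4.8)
The plan is to realize each code as a Gram matrix via \cref{lem:gram}, in the modular form $M = T + Q$ with $T$ a template (\cref{def:template}) and $Q$ positive semidefinite, so that $M\succeq 0$, $\operatorname{rank} M \le \operatorname{rank} T + \operatorname{rank} Q$, the diagonal of $M$ is $1$, and all off-diagonal entries lie in $L$. Write $T = B\Gamma B^\top$, where $B$ records the part of each vertex and $\Gamma$ is the $p\times p$ matrix with $\alpha$ on the diagonal and a common off-diagonal value $\gamma$; then $\Gamma \succeq 0$ iff $\gamma \ge -\tfrac{\alpha}{p-1}$, and since $p-1 = \lfloor \alpha/\beta\rfloor \le \alpha/\beta$ the admissible range $[-\tfrac{\alpha}{p-1}, -\beta]$ is nonempty. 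When $\alpha/\beta \notin \ZZ$ it contains the interior point $\gamma = -\tfrac{\alpha}{p-1} \in (-1,-\beta)$, and in every case taking $\gamma = -\tfrac{\alpha}{p-1}$ (which equals $-\beta$ when $\alpha/\beta \in \ZZ$) gives $\operatorname{rank} T = p-1$. Take $Q = (1-\alpha) Z^\top Z$ with the vertices split equally among the $p$ parts; then $Q$ has diagonal $1-\alpha$, so $M$ has unit diagonal, and $\operatorname{rank} Q = \operatorname{rank} Z$. Choosing the number of columns of $Z$ so that $\operatorname{rank} T + \operatorname{rank} Q = d$ will produce the stated bounds.

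Within a part, the off-diagonal entries of $M$ are $\alpha + (1-\alpha)(\text{entry of }Z^\top Z)$, which must lie in $L$; since these entries have absolute value at most $1$, the within-part blocks of $Z^\top Z$ must have off-diagonal entries in $\{0\} \cup [-1, -\tfrac{\alpha+\beta}{1-\alpha}]$, where the second interval is empty exactly when $\lambda < 1$. Hence for part~\ref{itm:lower-nint-p} we take each within-part block of $Q$ equal to $(1-\alpha)I$, which forces the columns of $Z$ in each part to be orthonormal, so $\operatorname{rank} Q \ge n_i$; taking all $n_i = d-p+1 = \operatorname{rank} Q$ yields $n = p(d-p+1)$. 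When $\lambda \ge 1$ (part~\ref{itm:lower-2p}) we instead take each within-part block of $Q$ equal to $(1-\alpha)(I - P_i)$ for a perfect matching $P_i$: a matched pair then corresponds to columns $z_a, z_{a'} = -z_a$ of $Z$, the resulting within-part entry is $2\alpha-1$, which lies in $[-1,-\beta]$ precisely because $\lambda \ge 1$, and this halves the rank, so $\operatorname{rank} Q \ge n_i/2$ and $n = 2p(d-p+1)$. In both regimes the representatives $z_a$ (one per matched pair) are orthonormal inside each part and span the same $\RR^{n'}$ with $n' = \operatorname{rank} Q$; the only remaining freedom is in the cross-part inner products $\langle z_a, z_b\rangle$, which must satisfy $\gamma \pm (1-\alpha)\langle z_a, z_b\rangle \in L$.

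When $\gamma$ is interior (cases~\ref{itm:lower-nint-p} and~\ref{itm:lower-nint-2p}, where $\alpha/\beta \notin \ZZ$), it suffices that $|\langle z_a, z_b\rangle| < \delta(\alpha,\beta)$ for a small constant $\delta$; taking the $p$ representative systems to be mutually ``flat'' orthonormal bases (e.g.\ a Haar-random tuple of orthogonal matrices $Z_i$, for which $Z_i^\top Z_j$ has entries $O_p(\sqrt{\log n'/n'})$) achieves this once $n' = d-p+1$ is large, which is the source of the hypothesis $d \ge d_0$. When $\gamma = -\beta$ is forced (cases~\ref{itm:lower-int-2p} and~\ref{itm:lower-4}, where $\alpha/\beta \in \ZZ$), the constraint becomes $\langle z_a, z_b\rangle \in \{0, \pm 1/\lambda\}$ together with the numeric check $-\alpha-2\beta \in [-1,-\beta]$ (which holds using $\alpha \ge 2\beta$ in~\ref{itm:lower-int-2p} and $\lambda \ge 1$ in~\ref{itm:lower-4}). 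So one needs $p$ orthonormal bases of a fixed $\RR^m$ whose pairwise inner products all lie in $\{0,\pm 1/\lambda\}$; since the change of basis between any two of them is orthogonal with entries in $\{0,\pm1/\lambda\}$, i.e.\ $\tfrac1\lambda$ times a weighing matrix $W(m,\lambda^2)$, this requires $\lambda^2 \in \ZZ$, as supplied by the hypotheses. For~\ref{itm:lower-4} we have $p = 2$ and need only a single such weighing matrix, which exists in a suitable fixed dimension $m = m(\alpha,\beta)$ (built from Hadamard and conference matrices via tensor products); for~\ref{itm:lower-int-2p} one needs a whole mutually-unbiased-type family of $p$ of them, constructed in a fixed dimension $m = m(\alpha,\beta)$ (for instance from a family of rank-$2$ alternating bilinear forms on $\FF_2^t$ with pairwise rank-$2$ differences, tensored to reach a general integer $\lambda$), and one then chains $(d-p+1)/m$ disjoint copies of the resulting $2pm$-vertex gadget over the shared template, which is why the bound is asserted only when $m \mid d-p+1$. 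The main obstacle is precisely this last step: producing, for arbitrary admissible $\lambda$ and $p$, the family of orthonormal bases (equivalently, the regular family of $\lambda^2$-weighing matrices closed under products); the two-base instance needed for~\ref{itm:lower-4} is comparatively easy, while the $p$-base instance for~\ref{itm:lower-int-2p} is the crux and is where $\alpha \ge 2\beta$ and $\lambda \in \ZZ$ are really used. The positive semidefiniteness and the rank count are otherwise routine given the block structure, and the degenerate case $p = 1$ (possible only in~\ref{itm:lower-nint-p} and~\ref{itm:lower-nint-2p}) is treated directly, the template being a single block.
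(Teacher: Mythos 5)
Your construction coincides with the paper's in every structural respect for cases \ref{itm:lower-nint-p}, \ref{itm:lower-2p}\ref{itm:lower-nint-2p} and \ref{itm:lower-2p}\ref{itm:lower-4}: the template with off-diagonal value $-\alpha/(p-1)$ of rank $p-1$, the within-part identity or matched-pair ($z,-z$) blocks scaled by $1-\alpha$, Haar-random orthogonal matrices with concentration of measure when $\alpha/\beta\notin\ZZ$ (this is exactly \cref{lem:matrix-copies}), and a single weighing matrix of weight $\lambda^2$ when $p=2$ (exactly \cref{prop:matrix}, obtainable by Paley-type conference matrices and Kronecker products as you indicate). The rank bookkeeping and the divisibility condition via chaining block-diagonal copies also match.

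However, for case \ref{itm:lower-2p}\ref{itm:lower-int-2p} there is a genuine gap, and you flag it yourself: you need $p$ orthonormal bases of a common $\RR^m$ whose pairwise cross-Gram matrices have entries in $\{0,\pm\lambda^{-1}\}$, and your proposed source (``rank-$2$ alternating bilinear forms on $\FF_2^t$ with pairwise rank-$2$ differences, tensored'') is left as an unverified ``for instance'' that you concede is the crux. This is precisely the step the paper resolves, and it does so by a different and cleaner mechanism that your framing misses: rather than seeking $p$ bases pairwise related by weight-$\lambda^2$ weighing matrices, the paper uses the stronger hypothesis $\lambda\in\ZZ$ to take a \emph{single} $\{0,\pm1\}$ matrix $Q$ with $Q^\intercal Q=\lambda I$ (weight $\lambda$, from \cref{prop:matrix}) and lifts it to $p$ matrices $Q_1,\dots,Q_p$ with $Q_i^\intercal Q_i=\lambda I$ and $Q_i^\intercal Q_j\in\{0,\pm1\}^{m^2\times m^2}$ by embedding the columns of $Q$ into the supports of a $(p,m)$-net (\cref{def:net}, \cref{prop:matrix-copies}), the net itself coming from mutually orthogonal Latin squares via Chowla--Erd\H{o}s--Straus, following Wocjan--Beth; the cross products then automatically have entries in $\{0,\pm1\}$ because any two columns from different nets share exactly one support coordinate. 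Without this lifting (or a completed substitute), your proof of \ref{itm:lower-int-2p} does not go through: the tensoring you suggest only combines families that already exist for each prime factor, so it does not reduce the problem, and it is exactly the existence of a $p$-member family for a given weight that remains unproven in your sketch. The remaining issues (e.g.\ the degenerate case $p=1$, and the fact that in \ref{itm:lower-int-2p} the bound $-\alpha-2\beta\ge-1$ uses $\lambda\ge1$ together with $\alpha\ge\beta$ rather than $\alpha\ge2\beta$ alone) are minor and easily repaired.
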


Our constructions rely on the existence of certain orthogonal matrices.

\begin{proposition}\label{prop:matrix}
    For any positive integer $k$, there exists a square matrix $Q$ with $\{0, \pm 1\}$ entries such that $Q^\intercal Q = k I$. 
\end{proposition}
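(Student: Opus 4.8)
The plan is to handle the prime-power structure of $k$ by reducing everything to two building blocks and then tensoring. Concretely, write $k = 2^a m$ with $m$ odd, and further factor $m$ using the classical four-square / two-square identities. The two ingredients I would use are: (i) a $2\times 2$ block $\begin{pmatrix} 1 & 1 \\ 1 & -1 \end{pmatrix}$, which satisfies $Q^\intercal Q = 2I$ with $\{0,\pm1\}$ entries — this handles the power-of-two part by taking Kronecker powers, since $(Q_1\otimes Q_2)^\intercal(Q_1\otimes Q_2) = (Q_1^\intercal Q_1)\otimes(Q_2^\intercal Q_2)$; and (ii) for an odd $k$, a matrix built from Lagrange's four-square theorem. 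For the latter, write $k = w^2 + x^2 + y^2 + z^2$ and take the $4\times4$ matrix
\[
Q = \begin{pmatrix} w & -x & -y & -z \\ x & w & -z & y \\ y & z & w & -x \\ z & -y & x & w \end{pmatrix},
\]
the matrix of left-multiplication by a quaternion of norm $k$, which satisfies $Q^\intercal Q = kI$. The catch is that its entries are $w,x,y,z$, not $\{0,\pm1\}$.

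To fix the entry constraint, I would replace each integer entry by a $\{0,\pm1\}$ block of the appropriate size, chosen so that the block-matrix product still collapses correctly. The cleanest route: prove the statement by strong induction on $k$. For the inductive step, given $k$, pick the largest square $t^2 \le k$ and write $k = t^2 + r$ with $0 \le r < 2t+1 \le$ (something controllable); by induction there is a $\{0,\pm1\}$ matrix $R$ with $R^\intercal R = rI$ (and separately a matrix for $t^2$, namely $tI$ replaced by... no — here's the key trick) — actually the right move is to realize $t$ itself as a sum of $1$'s: the $1\times t$ all-ones row vector $\mathbf{1}$ satisfies $\mathbf{1}\mathbf{1}^\intercal = t$. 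So I would substitute, inside the quaternion matrix above, each scalar entry $c$ by $c$ times an all-ones block (or its transpose) of dimensions that make every inner product of two "rows" of the resulting block matrix come out to $k$ on the diagonal and $0$ off-diagonal. This reduces to checking a bilinear identity among all-ones blocks, which follows from the original scalar identity $w^2+x^2+y^2+z^2 = k$ together with the orthogonality relations already present in the quaternion matrix.

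The main obstacle — the step I expect to require the most care — is bookkeeping the block dimensions so that the off-diagonal cancellations survive the substitution of scalars by all-ones blocks. The scalar identity guarantees the diagonal; the off-diagonal entries of $Q^\intercal Q$ are zero because of sign cancellations like $wx - xw + \dots = 0$, and I need those cancellations to persist when $w,x,\dots$ become all-ones blocks of matching shapes. This forces all substituted blocks to be square of the same size (say $\lceil\sqrt{k}\rceil \times \lceil\sqrt{k}\rceil$, padded with zeros), so that "$w$" and "$x$" in a cancelling pair are literally the same block $J$ and $wx - xw \to JJ^\intercal - JJ^\intercal = 0$. Once the sizes are pinned down this way, the verification $Q^\intercal Q = kI$ is a direct computation: diagonal blocks give $(w^2+x^2+y^2+z^2)J J^\intercal / \text{(normalization)} = kI$ after one more cancellation, and off-diagonal blocks vanish termwise. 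I would present this as: fix $n = \lceil \sqrt{k}\rceil$, let $J$ be the $n\times n$ all-ones matrix, note $JJ^\intercal$ has a single nonzero eigenvalue, and instead work with a single row-vector realization $v$ of length $k$ with entries $\pm 1$ arranged in four consecutive blocks of lengths $w^2, x^2, y^2, z^2$ with signs dictated by the quaternion matrix columns — then $Q$ is the $4\times k$-ish matrix whose rows are these sign patterns, and $QQ^\intercal = kI_4$ by the four-square identity applied blockwise. Finally, to get a *square* $Q$ (as the proposition states), pad with zero rows/columns or, more elegantly, tensor the rectangular construction with a Hadamard-type $\{0,\pm1\}$ square factor; alternatively one simply notes that a rectangular $Q$ with $Q^\intercal Q = kI_r$ can always be extended to a square one by appending zero columns, and the diagonal/entry conditions are unaffected. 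I'd state and use the latter as a trivial closing remark.
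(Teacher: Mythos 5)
Your reduction via the Kronecker product (multiplicativity of the property under $\otimes$) and the explicit $k=1,2$ blocks coincide with the paper's, but the heart of the proof is the base case for odd $k$, and there the proposal has a genuine gap: none of the proposed substitutions into the quaternion matrix produces a $\{0,\pm 1\}$ matrix with the required orthogonality. Replacing an entry $c$ by ``$c$ times an all-ones block'' leaves entries of size $|c|$, violating the $\{0,\pm1\}$ constraint; replacing cancelling entries by ``literally the same block $J$'' destroys the diagonal identity $w^2+x^2+y^2+z^2=k$; and the final $4\times k$ realization, with constant-sign blocks of lengths $w^2,x^2,y^2,z^2$ and signs taken from the quaternion matrix, is simply not row-orthogonal: the inner product of the first two rows is $w^2-x^2+y^2-z^2$, which is nonzero in general (try $k=3$, $(w,x,y,z)=(1,1,1,0)$). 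The scalar cancellation $wx-xw+yz-zy=0$ relies on the cross products $wx$, $yz$, and this is lost once block sizes are fixed to the squares. Moreover, even granting a $4\times k$ matrix with $QQ^\intercal=kI_4$, the closing completion step fails: appending zero columns to a rectangular $Q$ with $Q^\intercal Q=kI_r$ yields $\operatorname{diag}(kI_r,0)$, not $kI$, and a matrix of rank $4$ cannot have $k>4$ pairwise orthogonal nonzero columns, so no padding turns this object into the square matrix the proposition demands.

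The paper's base case is different and is the real content of the argument: for each odd prime $p$ it writes down an explicit $(p+1)\times(p+1)$ Paley/conference-type matrix whose core entries are Legendre symbols $\legendre{i-j}{p}$, bordered by $1$'s, and verifies $Q_p^\intercal Q_p=pI$ through the character-sum computation $\sum_{i}\legendre{i}{p}\legendre{i+c}{p}=-1$; Kronecker products then give all $k$. To repair your route you would need an actual construction of a $\{0,\pm1\}$ matrix (a weighing matrix) for odd $k$ or for odd primes---the four-square identity applied entrywise, as described, does not supply one---so as written the proposal does not prove the proposition.
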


\begin{proof} 
		Let $k$ be a positive integer. It suffices to construct a matrix $Q=Q_k$ with orthogonal columns such that each column has entries in $\{ 0, \pm 1\}$ and exactly $k$ of them are nonzero. We will first give the construction when $k = p$ is a prime number. Then, we will assemble $Q_k$ for arbitrary integers $k$ using the matrices $Q_p$ for each prime factor $p$ of $k$.
		
 		For $k = 1$ and $k = 2$  we can take the following matrices:
			\[
				Q_ 1 = \begin{pmatrix}
						1 
						\end{pmatrix} \quad \text{ and } \quad
				Q_2 = \begin{pmatrix}
						\phantom{+}1 & \phantom{+}1\\
						-1 & \phantom{+}1
						\end{pmatrix}.
			\]
		
		When $p$ is an odd prime number, define $Q_p$ to be a $(p+1)\times (p+1)$ matrix with entries
			\[
				(Q_p)_{ij} = \begin{cases}
							\legendre{i-j}{p} & \text{ if } 1 \leq i, j \leq p, \\
							0 & \text{ if } i = j = p+1, \\
							1 & \text{ otherwise, } \\
							\end{cases}
			\]
		where $\legendre{a}{p}$ denotes the Legendre symbol of $a$ and $p$. Clearly, all entries of $Q_p$ lie in $\{ 0, \pm 1\}$. Moreover, each row and column of $Q_p$ has exactly $p$ nonzero entries. For each $j \in [p]$,
			\[
				\bm e_{j}^\intercal  (Q_p^\intercal  Q_p) \bm e_{p+1} = \sum_{i = 1}^p \legendre{i-j}{p} = 0.
			\]
		Moreover, for all  $j, k \in [p]$ with $j \neq k$,
			\begin{align*}
				\bm e_{j}^\intercal  (Q_p^\intercal  Q_p) \bm e_{k} &= 1 + \sum_{i = 1}^p \legendre{i - j}{p}\legendre{i-k}{p} \\
															&= 1  + \sum_{i = 1}^p \legendre{i}{p}\legendre{i + (j-k)}{p} \\
															&= 1  + \sum_{i = 1}^{p-1} \legendre{i^2}{p}\legendre{1 + (j-k)i^{-1}}{p}\\
															&= 1  + \sum_{i = 1}^{p-1} \legendre{1 + (j-k)i^{-1}}{p}.
			\end{align*}
		Observe that the mapping $i \mapsto 1 + (j-k)i^{-1}$ maps $\FF_p \setminus \{0\}$ to  $\FF_p \setminus \{1\}$. Therefore,
			\begin{align*}
				\bm e_{j}^\intercal  (Q_p^\intercal  Q_p) \bm e_{k} &= 1  + \sum_{i = 1}^{p-1} \legendre{1 + (j-k)i^{-1}}{p}\\
															&= 1 + \left[\sum_{i = 0}^{p-1} \legendre{i}{p}\right] - \legendre{1}{p} = 0.
			\end{align*}
		So $Q_p$ is a $\{0, \pm 1\}$-valued matrix with $Q_p^\intercal Q_p = pI$.

        Finally, we can extend the construction of $Q_k$ to all positive integers $k$ via the Kronecker product $Q_{mn} = Q_m \otimes Q_n$. Here if $A=(a_{ij})_{1 \leq i, j \leq r}$ and $B=(b_{ij})_{1 \leq i,j\leq s}$ then $A \otimes B$ is an $rs \times rs$ matrix whose rows and columns are each indexed by $[r] \times [s]$ and whose $(i,i'),(j,j')$ entry is $a_{ij}b_{i'j'}$.
\end{proof}

\begin{proposition}\label{prop:matrix-copies}
    For any positive integer $k$ and $p$, there exist square matrices $Q_1, \ldots, Q_p$ of the same size with $\{0, \pm 1\}$ entries, such that $Q_i^\intercal Q_i = k I$ for each $i$ and $Q_i^\intercal Q_j$ has $\{0, \pm 1\}$ entries whenever $i \neq j$.
\end{proposition}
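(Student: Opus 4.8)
The plan is to realize the $Q_i$ as Kronecker products in which a \emph{single} matrix supplied by \cref{prop:matrix} is slotted into different tensor factors, with identity matrices filling the rest.

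First I would apply \cref{prop:matrix} to obtain an $n_0 \times n_0$ matrix $H$ with entries in $\{0,\pm 1\}$ satisfying $H^\intercal H = k I_{n_0}$. Then, for $i = 1, \dots, p$, I set
\[
Q_i \;=\; \underbrace{I_{n_0}\otimes\cdots\otimes I_{n_0}}_{i-1}\;\otimes\; H \;\otimes\; \underbrace{I_{n_0}\otimes\cdots\otimes I_{n_0}}_{p-i},
\]
an $n_0^p \times n_0^p$ matrix; all $Q_i$ have the same size, and each has entries in $\{0,\pm 1\}$ because every entry of a Kronecker product of $\{0,\pm 1\}$-matrices is a product of such entries, hence again lies in $\{0,\pm 1\}$.

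Next I would verify the two identities using the multiplicativity of the Kronecker product, $(A_1\otimes\cdots\otimes A_p)(B_1\otimes\cdots\otimes B_p) = (A_1B_1)\otimes\cdots\otimes(A_pB_p)$, together with $(A_1\otimes\cdots\otimes A_p)^\intercal = A_1^\intercal\otimes\cdots\otimes A_p^\intercal$. In $Q_i^\intercal Q_i$ every tensor factor multiplies to $I_{n_0}$ except the $i$-th, which gives $H^\intercal H = kI_{n_0}$, so $Q_i^\intercal Q_i = I_{n_0}\otimes\cdots\otimes kI_{n_0}\otimes\cdots\otimes I_{n_0} = kI_{n_0^p}$. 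For $i \neq j$, say $i<j$, the $i$-th factor of $Q_i^\intercal Q_j$ is $H^\intercal\cdot I_{n_0} = H^\intercal$, the $j$-th factor is $I_{n_0}\cdot H = H$, and every other factor is $I_{n_0}$; thus $Q_i^\intercal Q_j$ is a Kronecker product of matrices with $\{0,\pm 1\}$ entries, hence has $\{0,\pm 1\}$ entries. The case $p=1$ is vacuous (take $Q_1 = H$).

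The one conceptual point — and the reason naive attempts such as $Q_i = P_i Q$ or $Q_i = P_i\otimes Q$ for signed permutations $P_i$ fail — is that whenever two of the matrices carry \emph{aligned} copies of $H$, the cross product reproduces $H^\intercal H = kI$, whose entries have absolute value $k$. Placing the single copy of $H$ in a distinct tensor slot for each index is exactly what avoids this: in $Q_i^\intercal Q_j$ with $i \neq j$, the slot holding $H$ in $Q_i$ meets an identity in $Q_j$ and vice versa, so we never see $H^\intercal H$, only $H$ and $H^\intercal$ in isolation. Beyond spotting this trick, the proof is a one-line Kronecker computation, so I do not expect any genuine obstacle.
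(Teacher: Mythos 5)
Your construction is correct: each $Q_i=I^{\otimes(i-1)}\otimes H\otimes I^{\otimes(p-i)}$ has $\{0,\pm1\}$ entries, the mixed-product rule gives $Q_i^\intercal Q_i=kI_{n_0^p}$, and for $i\ne j$ the product $Q_i^\intercal Q_j$ is a Kronecker product of $H^\intercal$, $H$ and identities, hence has $\{0,\pm1\}$ entries; your diagnosis of why ``aligned'' copies of $H$ must be avoided is exactly the right point. This is, however, a genuinely different route from the paper. The paper keeps a single copy of the matrix $Q$ from \cref{prop:matrix} and lifts it through a $(p,m)$-net (\cref{def:net}), whose existence for all large $m$ rests on the Chowla--Erd\H{o}s--Straus theorem on mutually orthogonal Latin squares (\cref{porp:net}); the disjoint-support/one-common-index property of the net columns then yields $Q_i^\intercal Q_i=kI$ and $\{0,\pm1\}$ entries for $Q_i^\intercal Q_j$, following the Wocjan--Beth lifting idea. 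What the paper's approach buys is flexibility in the matrix size: after padding, it produces such families of size $m^2\times m^2$ for every sufficiently large $m$, whereas your tensor construction produces one size, $n_0^p\times n_0^p$, exponential in $p$. What your approach buys is simplicity: no Latin squares, no net, just a one-line Kronecker computation, and since \cref{prop:matrix-copies} (and its use in \cref{prop:lower}\ref{itm:lower-2p}\ref{itm:lower-int-2p}, where one only needs \emph{some} modulus $m=m(\alpha,\beta)$, and block-diagonal repetition handles the rest) asks only for existence of matrices of a common size, your argument fully suffices; the only cost is a larger value of $m$ in the divisibility condition of \cref{thm:detail}\ref{itm:detail-2}\ref{itm:detail-2int}.
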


The following lifting procedure is based on an idea of Wocjan and Beth \cite{WB05}. 

\begin{definition}[Net]\label{def:net}
A collection of $m^2 \times m$ matrices $V_1, \ldots, V_p$ with  $\{0,1\}$ entries is called a \emph{$(p,m)$-net} if
\[
    V_i^\intercal V_j = \begin{cases}
                            m I_m & \text{ if } i = j, \\
                            J_m   & \text{ if } i \neq j. \\
                         \end{cases}
\]
Here $I_m$ is $m\times m$ identity matrix and $J_m$ is $m\times m$ all ones matrix.
\end{definition}

\begin{proposition}\label{porp:net}
    For any positive integer $p$ there exists $m_0$ such that there exists a $(p,m)$-net for each $m \geq m_0$.
\end{proposition}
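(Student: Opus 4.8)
The plan is to construct a $(p,m)$-net by combining the orthogonal-type matrices from \cref{prop:matrix-copies} with a suitable "indicator lifting" so that $\{0,\pm1\}$ matrices become $\{0,1\}$ matrices, at the cost of squaring the number of rows. Concretely, fix $p$ and apply \cref{prop:matrix-copies} to obtain square matrices $Q_1,\dots,Q_p$ of some common size $r\times r$ with $\{0,\pm1\}$ entries such that $Q_i^\intercal Q_i = r I_r$ for each $i$ and $Q_i^\intercal Q_j$ has $\{0,\pm1\}$ entries for $i\ne j$. The first step is to record the key rigidity we get: for $i\ne j$, since $Q_i^\intercal Q_i = Q_j^\intercal Q_j = rI_r$, the matrix $R_{ij} := Q_i^\intercal Q_j$ satisfies $R_{ij}^\intercal R_{ij} = r^2 I_r$, so each of its rows and columns has exactly $r$ nonzero entries, each $\pm1$. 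I expect to want $m$ to be a multiple of $r$ (say $m = rt$), and then to build $V_i$ from $Q_i$ by a block/Kronecker construction using an auxiliary all-ones or identity pattern of size $t$, together with the sign-to-indicator trick.

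The sign-to-indicator trick is the heart of the argument (this is the Wocjan--Beth idea referenced in the excerpt). Replace each entry $+1$ of an $\{0,\pm1\}$ matrix by the column vector $\binom{1}{0}$, each $-1$ by $\binom{0}{1}$, and each $0$ by $\binom{0}{0}$; this doubles the number of rows. Under this substitution, if $x,y \in \{0,\pm1\}$ and $\hat x,\hat y\in\{0,1\}^2$ are their images, then $\hat x^\intercal \hat y = 1$ if $x=y\ne 0$, and $\hat x^\intercal \hat y = 0$ otherwise. Applying this column-wise (or to the whole matrix in the appropriate tensor slot) to $Q_i$ produces a $\{0,1\}$ matrix $\widehat Q_i$ whose Gram-type products $\widehat Q_i^\intercal \widehat Q_j$ count, for each pair of columns, the number of coordinates where the two columns of $Q_i,Q_j$ agree and are nonzero. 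The next step is the bookkeeping: for $i=j$, a column has $r$ nonzero entries all agreeing with themselves, and two distinct columns of $Q_i$ have $Q_i^\intercal Q_i$ off-diagonal entry $0$, meaning the number of agreeing-nonzero coordinates equals the number of disagreeing-nonzero coordinates; and similarly for $i\ne j$ one must extract the "agreement count" from the signed inner product $R_{ij}$ together with the total overlap of supports. This does not immediately give the clean $mI_m$ / $J_m$ pattern, so one typically tensors with a fixed small gadget (e.g. the $2\times 2$ Hadamard block, or pairs up each $Q_i$ with a shifted copy) to force the disagreement counts and agreement counts to balance out to the exact required values; I would choose the gadget so that the diagonal products become scalar multiples of $I$ and the cross products become multiples of $J$, then scale up by tensoring with $J_t$ or $I_t$ to hit an arbitrary large modulus $m$.

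The main obstacle I anticipate is precisely this balancing: the naive indicator lifting of a $\{0,\pm1\}$ orthogonal matrix gives inner products that record a \emph{difference} of agreement and disagreement counts, whereas a net requires the inner products themselves (agreement counts) to be exactly $m$ or exactly $1$ in every entry — a much more rigid condition. Getting there will require either (i) a careful choice of the matrices $Q_i$ from \cref{prop:matrix-copies} so that supports and sign patterns interact cleanly (e.g. using the Legendre-symbol and Kronecker-product structure from \cref{prop:matrix}), or (ii) an averaging/doubling step that adds a "complementary" copy so agreements and disagreements each occur a predictable number of times. Once the construction is pinned down for one convenient value of $m$, enlarging $m$ is routine: tensoring a $(p,m)$-net with $J_{t}$ (and rescaling) or concatenating blocks yields a $(p,mt)$-net, so all sufficiently large $m$ — indeed all multiples of the base modulus, and then all $m \ge m_0$ after absorbing small cases — are covered. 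I would close by verifying the two defining identities $V_i^\intercal V_i = mI_m$ and $V_i^\intercal V_j = J_m$ directly from the block structure.
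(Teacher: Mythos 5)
Your plan has a structural problem and two substantive gaps, so it does not yet prove the proposition. First, the logical order: in this paper \cref{prop:matrix-copies} is \emph{deduced from} the $(p,m)$-net (that is exactly the Wocjan--Beth lifting, which appears in the proof of \cref{prop:matrix-copies}, not here), so starting your construction of the net from $Q_1,\dots,Q_p$ as furnished by \cref{prop:matrix-copies} is circular unless you first supply an independent proof of that proposition. The paper's own proof of \cref{porp:net} is purely design-theoretic and does not touch signed matrices at all: it takes $p-2$ mutually orthogonal Latin squares of order $m$ (which exist for every sufficiently large $m$ by the Chowla--Erd\H{o}s--Straus theorem), adds the two trivial arrays $L^{(p-1)}_{ij}=i$ and $L^{(p)}_{ij}=j$, and lets $V_r$ be the cell--symbol incidence matrix of $L^{(r)}$; the identities $V_i^\intercal V_i=mI_m$ and $V_i^\intercal V_j=J_m$ are then immediate from the Latin and orthogonality properties.

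Second, the step you yourself flag as the obstacle is the entire content of the statement, and your proposed fixes do not close it. The condition $V_i^\intercal V_j=J_m$ says that every column of $V_i$ and every column of $V_j$ share exactly one support coordinate; this is precisely an orthogonal-array/transversal-design condition, equivalent to having $p-2$ MOLS of order $m$. The sign-to-indicator lifting only converts the signed inner products $Q_i^\intercal Q_j$ (agreement minus disagreement counts) into data about supports and sign patterns that are not controlled by \cref{prop:matrix-copies} (which only bounds entries of $Q_i^\intercal Q_j$ by $1$ in absolute value, saying nothing exact about support overlaps), and no Hadamard gadget or doubling trick will force the overlap of every pair of columns to be exactly one; carrying this out would amount to constructing the MOLS anyway. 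Finally, your closing claim that nets at multiples of a base modulus can be upgraded to ``all $m\ge m_0$ after absorbing small cases'' is not valid: Kronecker products of nets do give nets, but that only produces a multiplicatively generated set of orders, and covering \emph{every} sufficiently large $m$ is exactly where the paper needs the nontrivial input that the number of MOLS of order $m$ tends to infinity. Without a replacement for that input, the ``for each $m\ge m_0$'' part of the proposition is unproven.
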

\begin{proof}
Recall that an $m \times m$ matrix $L$ is a Latin square if each row and each column of $L$ is a permutation of $[m]$. Latin squares $L$ and $L'$ are orthogonal if the set $\{{(L^{(1)}_{ij}, L^{(2)}_{ij})}\}_{1 \leq i, j \leq m}$ is exactly equal to $[m]^2$. 
Chowla, Erd\H{o}s and Strauss \cite{CES60} showed that the number of $m \times m$ mutually orthogonal Latin squares tends to infinity as $m \to \infty$.

Given $p$, for sufficiently large $m \ge m_0(p)$, there exist mutually orthogonal Latin squares $L^{(1)}$, \dots, $L^{(p-2)}$ of order $m$. 
Let $L^{(p-1)}$ and $L^{(p)}$ be $m \times m$ matrices with $L^{(p-1)}_{ij} = i$ and $L^{(p)}_{ij} = j$ for all $i,j \in [m]^2$. 
For all $i, j, k \in [m]$ and $r \in [p]$, let $V_r$ have a $(im + j, k)$ entry equal to $1$ if $L^{r}_{ij} = k$ and $0$ otherwise. 
Then $V_1, \ldots, V_p$ form a $(p, m)$-net. The claim then follows.
\end{proof}

We will now use nets to lift matrix from \cref{prop:matrix} to prove \cref{prop:matrix-copies}.

\begin{proof}[Proof of \cref{prop:matrix-copies}]
    By \cref{porp:net} there exists a $(p,m)$ net for all $m \geq m_0(p)$. On the other hand, by \cref{prop:matrix} there exists a square matrix $Q$ with $\{0, \pm 1\}$-entries such that $Q^\intercal Q = kI$. We can assume that $Q$ is $m \times m$ matrix for some $m \geq m_0(p)$, because we can replace $Q$ by the matrix
   \[
				\begin{pmatrix}
					Q &  &   \\
					 & \ddots & \\
					& & Q
					\end{pmatrix}.
    \]
    Let us fix a $(p,m)$-net $V_1, \ldots, V_p$. For each $1 \leq i \leq p$ define $Q_i = Q \uparrow V_i$ to be an $m^2 \times m^2$ matrix formed by embedding columns of $Q$ into non-zero entries of columns of $V_i$.  Concretely, for each $1 \leq j \leq m$ and $1 \leq r \leq m$, we form the $(jm + r)$-th column of $Q_i$ by embedding $r$-th column of $Q$ into $j$-th column of $V_i$ as follows: the column $V_ie_j$ has exactly $m$ ones, and we simply replace these entries by the entries of the column $Qe_r$ in order. This procedure results in a matrix $Q_i$ with $\{0, \pm 1\}$ entries. Since the columns of $V_i$ have disjoint supports, we have that $Q_i^\intercal Q_i = kI$. Moreover, since the support of a column from $V_i$ and a column from $V_j$ have exactly one index in common, the entries of $Q_i^\intercal Q_j$ are in $\{0, \pm 1\}$, whenever $i \neq j$. 
 \end{proof}

\begin{lemma}\label{lem:matrix-copies}
    For any $p$ and $\varepsilon > 0$, there exists $m_0$ such that for all $m \geq m_0$ we can find $p$ orthogonal $m\times m$ matrices $R_1, \ldots, R_p$ such that  all entries of $R_i^\intercal R_j$ are at most $\varepsilon$ in absolute value for all $i \neq j$.
\end{lemma}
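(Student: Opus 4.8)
The plan is to normalize the matrices from \cref{prop:matrix-copies} so that each becomes orthogonal and the cross terms shrink as the size grows. Recall from \cref{prop:matrix-copies} that for any positive integers $k$ and $p$ we can find square matrices $Q_1, \dots, Q_p$ of a common size $n$ with entries in $\{0, \pm 1\}$ such that $Q_i^\intercal Q_i = kI$ and $Q_i^\intercal Q_j$ has entries in $\{0, \pm 1\}$ for $i \ne j$. Setting $R_i = k^{-1/2} Q_i$ makes each $R_i$ an orthogonal matrix (since $R_i^\intercal R_i = I$), and then $R_i^\intercal R_j = k^{-1} Q_i^\intercal Q_j$ has all entries bounded by $k^{-1}$ in absolute value for $i \ne j$. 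So as long as $k$ is chosen with $k \ge 1/\varepsilon$, the cross terms are at most $\varepsilon$ in absolute value. The size of these matrices is $n$, which depends on $k$ (hence on $\varepsilon$) and on $p$.

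The remaining point is to produce matrices of \emph{every} sufficiently large size $m$, not just the single size $n = n(\varepsilon, p)$ coming out of \cref{prop:matrix-copies}. First I would observe that the same construction works verbatim for any multiple of $n$: given $Q_1, \dots, Q_p$ of size $n$, the block-diagonal matrices $Q_i \otimes I_t$ (equivalently, $t$ copies of $Q_i$ stacked along the diagonal) satisfy $(Q_i \otimes I_t)^\intercal (Q_i \otimes I_t) = kI_{nt}$ and $(Q_i \otimes I_t)^\intercal (Q_j \otimes I_t) = (Q_i^\intercal Q_j) \otimes I_t$, which still has entries in $\{0, \pm 1\}$. This yields the desired matrices of size $nt$ for every positive integer $t$. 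To fill in the sizes that are not multiples of $n$, I would pad: to realize size $m$ with $m = nt + s$ where $0 \le s < n$, take the block-diagonal matrix with $t$ copies of $Q_i$ and then a further $I_s$ in the lower-right corner. One checks $Q_i^\intercal Q_i$ along the padded block is $I_s$ rather than $kI_s$, which is harmless after rescaling only the unpadded part — but it is cleaner simply to scale so that the unpadded blocks become $R_i = k^{-1/2}Q_i$ and leave the identity block alone, since an identity block contributes $0$ to any cross term $R_i^\intercal R_j$. Concretely, define $R_i$ to be the $m \times m$ orthogonal matrix consisting of $t$ diagonal copies of $k^{-1/2}Q_i$ followed by $I_s$; then $R_i^\intercal R_i = I_m$ and $R_i^\intercal R_j$ is block-diagonal with $t$ copies of $k^{-1}Q_i^\intercal Q_j$ and an $s \times s$ zero block, so every entry is at most $k^{-1} \le \varepsilon$ in absolute value.

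Taking $m_0 = n$ (or any value $\ge n$) then works: for every $m \ge m_0$ we write $m = nt + s$ with $t \ge 1$ and $0 \le s < n$ and apply the construction above. I do not anticipate any real obstacle here — the lemma is essentially a bookkeeping consequence of \cref{prop:matrix-copies} together with the elementary fact that orthogonality and the cross-term bound are both preserved under block-diagonal concatenation and under padding by an identity block. The only mild care needed is to choose $k \ge \lceil 1/\varepsilon \rceil$ at the outset so that the $k^{-1}$ bound on the entries of $Q_i^\intercal Q_j$ actually beats $\varepsilon$, and to handle the non-multiple sizes via the identity padding rather than trying to force every size to be a multiple of $n$.
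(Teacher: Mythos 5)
There is a genuine gap, and it is exactly at the step you flagged as mere bookkeeping: the identity padding. If you realize a size $m = nt + s$ by taking $R_i$ to be $t$ diagonal copies of $k^{-1/2}Q_i$ followed by the \emph{same} block $I_s$ for every $i$, then the lower-right $s \times s$ block of $R_i^\intercal R_j$ is $I_s^\intercal I_s = I_s$, not a zero block. Its diagonal entries equal $1$, so the bound $\abs{(R_i^\intercal R_j)_{st}} \le \varepsilon$ fails for every $\varepsilon < 1$. Nor can the padding be repaired by choosing cleverer small blocks: what you would need is $p$ orthogonal $s \times s$ matrices with pairwise products having all entries at most $\varepsilon$, i.e.\ the lemma itself at size $s$, which is false for small $s$ (already at $s = 1$ the only options are $\pm 1$). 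So your construction really only produces the sizes lying in the numerical semigroup generated by the available block sizes, and since all blocks you use have a common size $n$, that means only multiples of $n$. The ``for all $m \ge m_0$'' clause is not decorative: it is precisely what lets \cref{prop:lower}\ref{itm:lower-nint-p}, \cref{prop:lower}\ref{itm:lower-2p}\ref{itm:lower-nint-2p}, and \cref{prop:lower-extra} give bounds for every large dimension $d$ with no divisibility hypothesis, in contrast to the cases that use \cref{prop:matrix-copies} directly and do carry a divisibility condition.

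The first half of your argument (normalize $Q_i$ from \cref{prop:matrix-copies} to $R_i = k^{-1/2}Q_i$ with $k \ge 1/\varepsilon$, and take block-diagonal copies) is correct and not circular, and it does prove the statement for all $m$ divisible by the base size $n$; to upgrade it to all large $m$ you would need, e.g., building blocks of two coprime sizes, which your write-up does not provide and which does not obviously come out of \cref{prop:matrix-copies}. The paper avoids the issue entirely by a different, probabilistic route: sample $R_1,\dots,R_p$ independently from the Haar measure on $O(m)$, use concentration of measure on the sphere to bound $\PP(\abss{\angs{u,v}} \ge \varepsilon)$ for uniform unit vectors, and take a union bound over the $m^2\binom{p}{2}$ entries; this works simultaneously for every $m \ge m_0(\varepsilon,p)$ with no arithmetic constraints. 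Either fix the size-coverage issue (e.g.\ exhibit coprime block sizes) or switch to an argument of the paper's type; as written the proof does not establish the lemma.
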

\begin{proof}
    Sample $R_1, \ldots, R_p$ independently from the orthogonal group $O(m)$ with Haar probability measure. Then each column of $R_i$ is a uniform unit vector. By standard concentration of measure on a sphere (see Ball \cite{Bal97} for example), if $u, v$ are uniform unit vectors
    \[
        \PP \Bigl( \ang{u,v} \geq \varepsilon \Bigr) \leq 2 e^{-\varepsilon^2 m/2}.
    \]
    Therefore, by the union bound over all $1\leq s, t \leq m$ and $1 \leq i < j \leq p$,
    \[
        \PP \Bigl( |(R_i^\intercal R_j)_{s,t}| < \varepsilon \text{ for all $1\leq s, t \leq m$ and $1 \leq i < j \leq p$} \Bigr) \geq 1 - 2m^2\binom{p}{2}e^{-\varepsilon m^2/2}.
    \]
    Hence, there exists some $m_0 = m_0(\varepsilon, p)$ such that the probability above is positive whenever $m \geq m_0$. So there exist $R_1, \ldots, R_p$ with desired properties.    
\end{proof}

Let us introduce the templates we will be using in the construction.

\begin{definition}\label{def:uniform-template}
     Let $\alpha, \beta \in (0,1)$. Let $p, s_1, \ldots, s_p$ be positive integers. Write $s = (s_1,\ldots,s_p)$ and define $T_{\alpha, \beta}(p, s)$ by the following block form
         \[
        T_{\alpha, \beta}(p, s) = \begin{pmatrix}
            T_{11} & \cdots & T_{1p} \\
            \vdots & \ddots & \vdots \\
            T_{p1} & \cdots & T_{pp} \\
        \end{pmatrix},
    \]
    where $T_{ij}$ is the $s_i \times s_j$ matrix with all entries equal to $\alpha$ if $i = j$ and $-\beta$ if $i \neq j$. 
\end{definition}

\begin{lemma}\label{lem:psd-template}
    Let $\alpha, \beta \in (0,1)$ and $p$ be positive integer and $s$ be a $p$-tuple of positive integers. If $1 + \alpha/\beta \geq p$, then $T_{\alpha,\beta}(p,s)$ is a positive semidefinite matrix of rank at most $p$. 
    
    Furthermore, if $1 + \alpha/\beta = p$, then the rank of $T_{\alpha,\beta}(p,s)$ is at most $p-1$.
\end{lemma}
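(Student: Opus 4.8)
The plan is to write down the eigenvalues of $T_{\alpha,\beta}(p,s)$ explicitly by exploiting the fact that the matrix is constant on each block. First I would observe that $T := T_{\alpha,\beta}(p,s)$ can be written as
\[
    T = (\alpha+\beta) \begin{pmatrix} J_{s_1} & & \\ & \ddots & \\ & & J_{s_p}\end{pmatrix} - \beta J_n,
\]
where $n = s_1 + \cdots + s_p$ and $J_k$ is the $k\times k$ all-ones matrix. Indeed, the diagonal blocks become $(\alpha+\beta)J_{s_i} - \beta J_{s_i} = \alpha J_{s_i}$, which has all entries $\alpha$, and the off-diagonal blocks become $-\beta J_{s_i\times s_j}$, as required.

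Next I would identify a common invariant subspace structure. Let $W$ be the $p$-dimensional space of vectors that are constant on each block $V_i$ (spanned by the indicator vectors $\mathbf 1_{V_1},\dots,\mathbf 1_{V_p}$), and let $W^{\perp}$ be its orthogonal complement, which consists of vectors summing to zero on each block. On $W^\perp$ both $\bigoplus_i J_{s_i}$ and $J_n$ act as zero, so $T$ vanishes on $W^\perp$; this already shows $\operatorname{rank} T \le \dim W = p$. On $W$, the operator $\bigoplus_i J_{s_i}$ acts (in the basis $\mathbf 1_{V_i}$) as $\operatorname{diag}(s_1,\dots,s_p)$ up to the obvious scaling, and $J_n$ acts as the rank-one form $v \mapsto (\text{sum of entries})\cdot \mathbf 1$. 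Concretely, writing an element of $W$ as $\sum_i x_i \mathbf 1_{V_i}$, the quadratic form is
\[
    \langle T v, v\rangle = (\alpha+\beta)\sum_{i=1}^p s_i x_i^2 - \beta\Bigl(\sum_{i=1}^p s_i x_i\Bigr)^2 .
\]
So it suffices to show this form is positive semidefinite on $\RR^p$ whenever $1+\alpha/\beta \ge p$, and positive semidefinite of rank $\le p-1$ when $1+\alpha/\beta = p$. Substituting $y_i = \sqrt{s_i}\, x_i$ reduces this to showing $(\alpha+\beta)\|y\|^2 - \beta \langle \sqrt s, y\rangle^2 \ge 0$, where $\sqrt s = (\sqrt{s_1},\dots,\sqrt{s_p})$; by Cauchy–Schwarz $\langle \sqrt s, y\rangle^2 \le (\sum_i s_i)\|y\|^2 = n\|y\|^2$, but that bound is too weak. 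The right move is instead to diagonalize the $p\times p$ matrix $(\alpha+\beta)\operatorname{diag}(s) - \beta s s^\intercal$ directly: its determinant (and those of its principal minors) can be computed by the matrix-determinant lemma, $\det\bigl((\alpha+\beta)\operatorname{diag}(s) - \beta ss^\intercal\bigr) = (\alpha+\beta)^p \bigl(\prod_i s_i\bigr)\bigl(1 - \tfrac{\beta}{\alpha+\beta}\sum_i 1\bigr)$... wait, that is not quite it either since $s^\intercal \operatorname{diag}(s)^{-1} s = \sum_i s_i$, not $\sum_i 1$. Let me restate: $\det = (\alpha+\beta)^p(\prod_i s_i)\bigl(1 - \tfrac{\beta}{\alpha+\beta}\sum_i s_i\bigr)$, which is generally negative, confirming that $T$ restricted to $W$ is \emph{not} automatically PSD — so the constant-on-blocks subspace must be further split.

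The correct decomposition, which I would carry out, splits $W = W' \oplus \langle \mathbf 1_n\rangle$ where $W'$ is the codimension-one subspace of $W$ on which $\sum_i s_i x_i = 0$. On $W'$ the form reduces to $(\alpha+\beta)\sum_i s_i x_i^2 \ge 0$, strictly positive unless $v = 0$, giving $p-1$ positive eigenvalues. On the remaining one-dimensional piece spanned by $\mathbf 1_n$ we have $x_i \equiv t$ and $\langle T\mathbf 1_n, \mathbf 1_n\rangle = (\alpha+\beta) n t^2 - \beta n^2 t^2 = n t^2\bigl(\alpha+\beta - \beta n\bigr)$ — still sign-indefinite. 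So even this is not the natural split; the honest approach is simply to compute all $p$ eigenvalues of $B := (\alpha+\beta)\operatorname{diag}(s) - \beta ss^\intercal$ on $W$ and check their signs against the hypothesis. Since $B$ is a diagonal matrix plus a rank-one perturbation, its eigenvalues interlace the $s_i$ scaled by $(\alpha+\beta)$, and the number of nonpositive eigenvalues is controlled by the secular equation $1 = \beta\sum_i \frac{s_i}{(\alpha+\beta)s_i - \mu}$. I expect the main obstacle to be this eigenvalue bookkeeping: one must show that the hypothesis $1 + \alpha/\beta \ge p$, equivalently $\alpha+\beta \ge p\beta$, forces the unique possibly-negative root of the secular equation to be in fact $\ge 0$ (and $=0$ exactly when $1+\alpha/\beta=p$, accounting for the rank drop to $p-1$). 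Concretely, at $\mu = 0$ the secular function is $\beta \sum_i \frac{1}{\alpha+\beta} = \frac{p\beta}{\alpha+\beta} \le 1$, and monotonicity of each term $\frac{s_i}{(\alpha+\beta)s_i - \mu}$ in $\mu$ on the relevant interval pins down the sign of the smallest eigenvalue; when $p\beta/(\alpha+\beta) = 1$ exactly, $\mu=0$ is itself the eigenvalue, so $\operatorname{rank} B \le p-1$ and hence $\operatorname{rank} T \le p-1$. Assembling: $T$ has $\operatorname{rank} \le p$ always (vanishing on $W^\perp$), is PSD when $\alpha+\beta\ge p\beta$, and has rank $\le p-1$ when $\alpha+\beta = p\beta$. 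This is the cleanest path, and the only delicate point is the monotonicity/sign analysis of the secular equation, which I would do by hand since each summand is individually monotone on the interval $(-\infty, (\alpha+\beta)\min_i s_i)$ containing $0$.
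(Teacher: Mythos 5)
Your opening reduction is sound: writing $T = (\alpha+\beta)\bigl(J_{s_1}\oplus\cdots\oplus J_{s_p}\bigr) - \beta J_n$ and noting that $T$ annihilates every vector summing to zero on each block does give $\operatorname{rank} T \le p$. But the quadratic form on the block-constant subspace $W$ is miscomputed, and this error derails everything after it. For $v = \sum_i x_i \mathbf{1}_{V_i}$ one has $\mathbf{1}_{V_i}^\intercal J_{s_i}\mathbf{1}_{V_i} = s_i^2$, so the correct form is $v^\intercal T v = (\alpha+\beta)\sum_i s_i^2 x_i^2 - \beta\bigl(\sum_i s_i x_i\bigr)^2$, not $(\alpha+\beta)\sum_i s_i x_i^2 - \beta\bigl(\sum_i s_i x_i\bigr)^2$. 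With the correct form the substitution $y_i = s_i x_i$ gives $(\alpha+\beta)\snorm{y}^2 - \beta\bigl(\sum_i y_i\bigr)^2 \ge (\alpha+\beta - p\beta)\snorm{y}^2 \ge 0$ by Cauchy--Schwarz against the all-ones vector, which is exactly the hypothesis $1+\alpha/\beta\ge p$; and when $\alpha+\beta = p\beta$ the direction $y = \mathbf{1}$ is in the kernel, giving rank at most $p-1$. Equivalently, the Gram matrix of $T|_W$ in the basis $\{\mathbf{1}_{V_i}\}$ is $D\bigl((\alpha+\beta)I_p - \beta J_p\bigr)D$ with $D = \operatorname{diag}(s)$, congruent to the $p\times p$ template. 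So your interim conclusion that ``$T$ restricted to $W$ is not automatically PSD'' is false (it would contradict the lemma itself), and the clean route you abandoned actually works.

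The fallback secular-equation argument does not repair this, because it analyzes the wrong matrix. The symmetric matrix $B = (\alpha+\beta)\operatorname{diag}(s) - \beta s s^\intercal$ is congruent, via conjugation by $\operatorname{diag}(s)^{-1/2}$, to $(\alpha+\beta)I_p - \beta\sqrt{s}\,\sqrt{s}^{\,\intercal}$, whose smallest eigenvalue is $\alpha+\beta - \beta\sum_i s_i$; this is negative whenever $\sum_i s_i > 1+\alpha/\beta$, so under the hypothesis $B$ genuinely fails to be positive semidefinite for most $s$, and no sign bookkeeping can show otherwise. Moreover the equation you wrote, $1 = \beta\sum_i s_i/\bigl((\alpha+\beta)s_i-\mu\bigr)$, is not the secular equation of $B$ (that one has $s_i^2$ in the numerators); it is instead the characteristic equation of the non-symmetric matrix $(\alpha+\beta)\operatorname{diag}(s) - \beta\mathbf{1}s^\intercal = \bigl((\alpha+\beta)I_p - \beta J_p\bigr)\operatorname{diag}(s)$, which is the matrix of $T|_W$ in the basis $\{\mathbf{1}_{V_i}\}$ and is similar to $D^{1/2}\bigl((\alpha+\beta)I_p-\beta J_p\bigr)D^{1/2}$; one could salvage the argument through that identification, but your write-up never makes it. For comparison, the paper's proof sidesteps all of this: $T_{\alpha,\beta}(p,s)$ is obtained from the $p\times p$ matrix $(\alpha+\beta)I_p - \beta J_p$ by duplicating rows and columns, and that matrix has eigenvalues $\alpha+\beta$ with multiplicity $p-1$ and $\alpha+\beta - p\beta$ once, which yields both the positive semidefiniteness and the rank statements immediately.
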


\begin{proof}
    First suppose that $s_i = 1$ for all $1 \leq i \leq s$. Then 
    \[
        T_{\alpha, \beta}(p,s) = (\alpha  + \beta)I - \beta J.
    \]
    Since the largest eigenvalue of $J$ is $p$ and $(\alpha + \beta) - \beta p \geq 0$, we see that $T_{\alpha, \beta}(p,s)$ is a positive semidefinite matrix of rank at most $p$. Moreover, if $\alpha + \beta = \beta p$ then $T_{\alpha, \beta}(p,s)$ has rank at most $p-1$. Finally, duplicating rows and columns does not affect the rank or positive semidefiniteness.
\end{proof}

We are now ready to prove \cref{prop:lower}.

\begin{proof}[Proof of \cref{prop:lower} \ref{itm:lower-nint-p}]
Here $\alpha / \beta \notin \ZZ$ and so $\alpha/(p-1) = \beta + \varepsilon$ for some $\varepsilon > 0$. By \cref{lem:matrix-copies} there exists $r_0=r_0(\alpha,\beta)$ such that for each $r \geq r_0$ there exist $r \times r$ orthogonal matrices $R_1, \ldots, R_p$ such that all entries of $R_i^\intercal R_j$ are bounded by $\varepsilon/(1-\alpha)$ in absolute value, whenever $i \neq j$. Let
\[
    Q = (1-\alpha) \begin{pmatrix}
					R_1 & \cdots & R_p
			 \end{pmatrix}^\intercal
                \begin{pmatrix}
					R_1 & \cdots & R_p
			 \end{pmatrix}.
\]
Then, $Q$ is a $pr \times pr$ positive semidefinite matrix of rank at most $r$ with non-diagonal entries bounded in absolute value by $\varepsilon$ and diagonal entries equal to $(1-\alpha)$.

Let $T = T_{\alpha, \alpha/(p-1)}(p,s)$ where $s = (r, \ldots, r) \in \NN^p$ is as in \cref{def:uniform-template}. By \cref{lem:psd-template}, the template $T$ is positive semidefinite of rank at most $p-1$. Let
\[
    M = Q + T.
\]
Then $M$ is a $pr \times pr$ positive semidefinite matrix with rank at most $r + p-1$ such that diagonal entries of $M$ are equal to $1$, and all other entries are either $\alpha$ or at most $\varepsilon - \alpha/(p-1) = -\beta$. Hence by \cref{lem:gram} there exists a spherical $[-1,-\beta] \cup \{\alpha\}$-code with Gram matrix $M$, and so for all $r \geq r_0$,
\[
    N_L(r + p-1) \geq pr.
\]
In particular, there exists $d_0 = d_0(\alpha, \beta)$ such that $N_L(d) \geq p(d-p+1)$ for all $d \geq d_0$.
\end{proof}
\medskip

\begin{proof}[Proof of \cref{prop:lower} \ref{itm:lower-2p}\ref{itm:lower-nint-2p}]
Here $\alpha/\beta \notin \ZZ$ and so $\alpha/(p-1) = \beta + \varepsilon$ for some $\varepsilon > 0$. Also $\lambda \geq 1$. As before, by \cref{lem:matrix-copies} there exists $r_0=r_0(\alpha,\beta)$ such that for each $r \geq r_0$ there exist $r \times r$ orthogonal matrices $R_1, \ldots, R_p$ such that all entries of $R_i^\intercal R_j$ are bounded by $\varepsilon/(1-\alpha)$ in absolute value, whenever $i \neq j$. Let
\[
    Q = (1-\alpha) \begin{pmatrix}
					R_1& -R_1 & \cdots & R_p & -R_p
			      \end{pmatrix}^\intercal
                    \begin{pmatrix}
					R_1 & -R_1 & \cdots & R_p & -R_p
			      \end{pmatrix}.
\]
Then, $Q$ is a $2pr \times 2pr$ positive semidefinite matrix of rank at most $r$ with entries equal to $\pm (1-\alpha)$ or bounded in absolute value by $\varepsilon$.

Suppose $T = T_{\alpha, \alpha/(p-1)}(p,s)$ where $s = (2r, \ldots, 2r) \in \NN^p$. By \cref{lem:psd-template}, $T$ is a positive semidefinite matrix of rank at most $p-1$. Define
\[
    M = Q + T.
\]
Then $M$ is a $2pr \times 2pr$ positive semidefinite matrix with rank at most $r + p-1$ such that diagonal entries of $M$ are equal to $1$, and all other entries are either $\alpha$ or $\alpha - (1-\alpha) \leq -\beta$ (this is equivalent to $\lambda \geq 1$) or at most $\varepsilon - \alpha/(p-1) = -\beta$. Hence by \cref{lem:gram} there exists a spherical $[-1,-\beta] \cup \{\alpha\}$-code with Gram matrix $M$, so for all $r \geq r_0$,
\[
    N_L(r + p-1) \geq 2pr.
\]
In particular, there exists $d_0 = d_0(\alpha, \beta)$ such that $N_L(d) \geq 2p(d-p+1)$ for all $d \geq d_0$.
\end{proof}

\begin{proof}[Proof of \cref{prop:lower} \ref{itm:lower-2p}\ref{itm:lower-int-2p}]
Suppose $\alpha/\beta \in \ZZ$ with $\alpha/\beta \geq 2$ and $\lambda \in \ZZ$. In particular, this means $\beta = \alpha/(p-1)$ with $p \geq 3$. By \cref{prop:matrix-copies} there exist an integer $m$ and $m\times m$ matrices $Q_1, \ldots, Q_p$ such that $Q_i^\intercal Q_i = \lambda I$ and $Q_i^\intercal Q_j$ has entries in $\{0, \pm 1\}$ whenever $i \neq j$. For any positive integer $r$, define an $rm \times rm$ matrix $R^{(r)}_i$ by 
\[
    R_i^{(r)} = \lambda^{-1/2}\begin{pmatrix}
                Q_i & & \\
                & \ddots & \\
                & & Q_i \\
                \end{pmatrix}.
\]
There are $r$ copies of $Q_i$ on the diagonal. Let
\[
    Q = (1-\alpha) \begin{pmatrix}
					R_1^{(r)} & -R_1^{(r)} & \cdots & R_p^{(r)} & -R_p^{(r)}
			      \end{pmatrix}^\intercal
                    \begin{pmatrix}
					R_1^{(r)} & -R_1^{(r)} & \cdots & R_p^{(r)} & -R_p^{(r)}
			      \end{pmatrix}.
\]
Then, $Q$ is  a $2prm \times 2prm$ positive semidefinite matrix of rank at most $rm$ with entries equal to $\pm (1-\alpha)$ or an element in $\{0, \pm(1-\alpha)\lambda^{-1}\} = \{0, \pm(\alpha + \beta)\}$.

Let $T = T_{\alpha, \beta}(p,s)$ where $s = (2mr, \ldots, 2mr) \in \NN^p$. By \cref{lem:psd-template} the template $T$ is positive semidefinite of rank at most $p-1$. Let
\[
    M = Q + T.
\]

Then $M$ is a $2prm \times 2prm$ positive semidefinite matrix with rank at most $rm + p-1$ such that diagonal entries of $M$ are equal to $1$, and all other entries are either $\alpha$ or $\alpha - (1-\alpha) \leq -\beta$ (this is equivalent to $\lambda \geq 1)$ or $\{0, \pm(\alpha + \beta)\}-\beta \subseteq [-1,-\beta]\cup \{\alpha\}$. Hence by \cref{lem:gram} there exists a spherical $[-1,-\beta] \cup \{\alpha\}$-code with Gram matrix $M$. So 
\[
    N_L(rm + p-1) \geq 2prm.
\]
In particular, if $d - p + 1$ is divisible by $m$ then $N_L(d) \geq 2p(d-p+1)$.
\end{proof}
\medskip

\begin{proof}[Proof of \cref{prop:lower} \ref{itm:lower-2p}\ref{itm:lower-4}]
Here $\alpha = \beta$ and $\lambda^2 \in \ZZ$. By \cref{prop:matrix} there exists an integer $m$ and a $m \times m$ orthogonal matrix $R$ with entries in $\{0, \pm \lambda^{-1}\}$. For any positive integer $r$, define $rm \times rm$ matrix $R^{(r)}$ by 
\[
    R^{(r)} =  \begin{pmatrix}
                R & & \\
                & \ddots & \\
                & & R \\
                \end{pmatrix}.
\]
There are $r$ copies of $R$ on the diagonal. Let
\[
			 Q = (1-\alpha)
			 \begin{pmatrix}
					 I & - I & R^{(r)} & -R^{(r)}
			 \end{pmatrix}^\intercal
			 \begin{pmatrix}
				     I & - I & R^{(r)} & -R^{(r)}
			 \end{pmatrix}
\]
Then $Q$ is a $4mr \times 4mr$ positive semidefinite matrix of rank at most $rm$ with entries equal to $\pm(1-\alpha)$ or an element in $\{\pm(1-\alpha)\lambda^{-1}\} = \{0, \pm2 \alpha\}$.

Suppose $T = T_{\alpha, \alpha}(2,s)$ where $s = (2mr, 2mr)$. By \cref{lem:psd-template} the template $T$ is positive semidefinite of rank $1$. Define
\[
    M = Q + T.
\]
Then $M$ is a $4rm \times 4rm$ positive semidefinite matrix with rank at most $rm + 1$ such that diagonal entries of $M$ are equal to $1$, and all other entries are either $\alpha$ or $\alpha - (1-\alpha) \leq -\beta$ (this is equivalent to $\lambda \geq 1$) or $\{0,\pm2\alpha\}-\alpha \subseteq [-1,-\alpha]\cup \{\alpha\}$. Hence by \cref{lem:gram} there exists a spherical $[-1,-\alpha] \cup \{\alpha\}$-code with Gram matrix $M$. So 
\[
    N_L(rm +1) \geq 4rm.
\]
In particular, if $d - 1$ is divisible by $m$ then $N_L(d) \geq 4(d-1)$.
\end{proof}

Next, we show that when $\alpha/\beta$ is near the upper end of its allowed interval $[p-1,p)$, one can add additional vectors to the $L$-code.

\begin{proposition}\label{prop:lower-extra}
     Let $L = [-1,-\beta]\cup \{\alpha\}$ with $\alpha, \beta \in (0,1)$ and $\lambda = (1-\alpha)/(\alpha + \beta)$. Assume $\lambda \ge 1$.
     Then, for any positive integers $p$ and $M$ we can find $\varepsilon > 0$ and $d_0$ such that if $p-\varepsilon < \alpha/\beta < p$, then for all $d \geq d_0$,
    \[
         N_L(d) \geq 2pd + M.
    \]
\end{proposition}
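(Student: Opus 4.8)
The plan is to modify the construction behind \cref{prop:lower}\ref{itm:lower-2p}\ref{itm:lower-nint-2p}. That construction uses the template $T_{\alpha,\beta'}(p,s)$ with $\beta'=\alpha/(p-1)$, which by \cref{lem:psd-template} has rank only $p-1$; its ``atom'' vectors $\tau_1,\dots,\tau_p$ (with $\snorm{\tau_i}^2=\alpha$ and $\ang{\tau_i,\tau_j}=-\beta'$ for $i\ne j$) then satisfy $\sum_i\tau_i=0$, and this exact linear relation is precisely what obstructs appending further vectors. So instead I will take $\beta'\in(\beta,\alpha/(p-1))$, close to $\beta$. The template $T_{\alpha,\beta'}(p,s)$ is still positive semidefinite (\cref{lem:psd-template}), now of rank $p$, and its atoms $\tau_i\in\RR^p$ satisfy $\sigma:=\sum_i\tau_i\ne 0$ with $\ang{\sigma,\tau_i}=\alpha-(p-1)\beta'>0$ for every $i$. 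Running the rest of the construction of \cref{prop:lower} with this template — $p$ pairwise near-orthogonal rotations $R_1,\dots,R_p$ of $\RR^r$, each part doubled via the sign trick — produces, by \cref{lem:gram}, a spherical $L$-code of size $2pr$ in dimension $r+p$, i.e.\ $N_L(d)\ge 2p(d-p)$. This is slightly worse than the $2p(d-p+1)$ of \cref{prop:lower}, but it leaves room, and the proposition will follow once we append $K:=2p^2+M$ further vectors without increasing the dimension: then $N_L(d)\ge 2p(d-p)+K=2pd+M$.

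The extra vectors will be $w_1,\dots,w_K$ of the form $w_k=(y_k,z_0)\in\RR^r\oplus\RR^p$, where $z_0:=-t\,\sigma/\snorm\sigma$ for a scalar $t$ chosen below, and the $y_k\in\RR^r$ are chosen so that $\snorm{y_k}^2=1-t^2$ and $\ang{y_k,y_\ell}=\alpha-t^2$ for $k\ne\ell$; such $y_k$ exist if and only if the Gram matrix $(1-\alpha)I_K+(\alpha-t^2)J_K$ is positive semidefinite, i.e.\ when $t^2\le\alpha+(1-\alpha)/K$ (which also forces $t\in(0,1)$). Writing $c:=\ang{\sigma/\snorm\sigma,\tau_i}=(\alpha-(p-1)\beta')/\snorm\sigma>0$, which is the same for every $i$ by symmetry of the template, one checks $\snorm{w_k}=1$ and $\ang{w_k,w_\ell}=\ang{y_k,y_\ell}+\snorm{z_0}^2=(\alpha-t^2)+t^2=\alpha$ for $k\ne\ell$, so the $w_k$ are pairwise at angle $\alpha$; and the inner product of $w_k$ with a base code vector $(\pm\sqrt{1-\alpha}\,R_ie_\ell,\tau_i)$ equals $\pm\sqrt{1-\alpha}\,\ang{y_k,R_ie_\ell}-tc$, which lies in $[-1,-\beta]$ provided $tc>\beta$ and $\sqrt{1-\alpha}\,\abs{\ang{y_k,R_ie_\ell}}\le tc-\beta$ for all $k,i,\ell$. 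For the last condition, apply \cref{lem:matrix-copies} with $p+1$ matrices to obtain pairwise $\varepsilon_1$-near-orthogonal rotations $R_0,R_1,\dots,R_p$ and place the $y_k$ inside the span of the first $K$ columns of $R_0$; then $\abs{\ang{y_k,R_ie_\ell}}\le\sqrt{K}\,\varepsilon_1$, which we make as small as needed. (The same near-orthogonality, with a small allowance coming from $\beta'>\beta$, is also what keeps all the between-part inner products of the base construction $\le-\beta$.)

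It remains to pick $t$, which is possible exactly when $(\beta/c)^2<\alpha+(1-\alpha)/K$. Letting $\beta'\to\beta$ gives $c^2\to(\alpha-(p-1)\beta)/p$, hence $(\beta/c)^2\to p\beta/(\mu-p+1)$ where $\mu:=\alpha/\beta$; this limit equals $\alpha$ at $\mu=p$, with slack $\alpha+(1-\alpha)/K-(\beta/c)^2\to(1-p\beta)/K$ there. The hypothesis $\lambda\ge1$ enters here: it forces $(2\mu+1)\beta\le1$, so $\beta\le1/(2\mu+1)\le1/(2p)$ once $\varepsilon\le1/2$, whence $1-p\beta\ge1/2$ and the slack at $\mu=p$ is bounded below by a positive constant depending only on $p$ and $K$; a routine bound on the derivative in $\mu$ then shows that $(\beta/c)^2<\alpha+(1-\alpha)/K$ persists for all $\mu\in(p-\varepsilon,p)$, where $\varepsilon=\varepsilon(p,M)>0$ (one may take $\varepsilon=\min(1/2,\,1/(6K))$ with $K=2p^2+M$) depends only on $p$ and $M$. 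Fixing then $\beta'$ sufficiently close to $\beta$ (hence $\varepsilon_1$, hence $t$), \cref{lem:matrix-copies} supplies the rotations for every $r\ge r_0(\alpha,\beta)$, and we obtain a code of size $2pr+K$ in dimension $r+p$; writing $d=r+p$ gives $N_L(d)\ge 2pd+M$ for all $d\ge d_0(\alpha,\beta)$.

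The main obstacle is conceptual rather than computational: one must recognize that \emph{degrading} the base construction — passing to a rank-$p$ rather than a rank-$(p-1)$ template, so that $\sum_i\tau_i\ne0$ — is exactly what creates the slack, and that the appended vectors are then forced (in their $\RR^p$-coordinates) to point along $-\sigma$, in order to be simultaneously obtuse to all $p$ parts while remaining pairwise at angle $\alpha$; the rigid ``$\pm$'' pairing inside each part rules out adding anything else, which also shows why the improvement is impossible unless $\alpha/\beta$ is close to $p$. Finally, this construction explains the remark that the global structure theorem must sometimes remove many vectors: the $K$ appended vectors form an $\alpha$-clique that no partition into $p$ parts can absorb.
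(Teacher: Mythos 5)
Your construction is correct and is essentially the paper's own: both append a $K$-clique of pairwise-$\alpha$ unit vectors whose common component points negatively toward all $p$ doubled parts, with the generic components housed via an extra near-orthogonal rotation $R_0$ from \cref{lem:matrix-copies}, and with feasibility coming from the slack created by $\alpha/\beta < p$ together with $\lambda \ge 1$ (which also justifies the $\pm$ doubling). The only difference is bookkeeping: the paper encodes the slack through a $(p+1)$-part template $T_{\alpha,\gamma}(p+1,\cdot)$ with $\gamma = \alpha/(p-\varepsilon)$ plus a rank-one correction $E$ on the $K\times K$ block, while you keep a rank-$p$ $p$-part template with off-diagonal $-\beta'$ and aim the new vectors along $-\sigma$, which is the same construction realized with explicit vectors instead of a PSD matrix decomposition.
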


\begin{proof}
    Take $K = M + 2p(p+2)$ and $\varepsilon = 1/(pK + K)$. Suppose $p-\varepsilon < \alpha/\beta < p$. Take $\gamma = \alpha/(p-\varepsilon)$ (this will be a proxy for $\beta$) and note that the choice of $\varepsilon$ guarantees 
    \[
      1 - \frac{\varepsilon \gamma (p+1) K}{1-\alpha} = 1 - \frac{\gamma}{1-\alpha} > 0
    \]
    The last inequality follows from $\lambda \geq 1$. Hence the matrix given by the difference 
    \[
        I_{K}- \frac{\varepsilon (p+1) \gamma}{1-\alpha} J_{K}.
    \]
    is positive semidefinite. In particular, we can find a $K \times K$ matrix $B$ such that
    \[
        B^\intercal B = I_{K}- \frac{\varepsilon (p+1) \gamma}{1-\alpha} J_{K}.
    \]
    For $r \geq K$, extend $B$ with zeroes to form a $r \times K$ matrix $B_r$ such that $B_r^\intercal B_r = B^\intercal B$.
    
    By \cref{lem:matrix-copies}, there exists $r_0 \geq K$ such that for all $r \geq r_0$ we can find $r \times r$ orthogonal matrices $R_0, \ldots, R_{p}$ such that for all $i \neq j$, all entries of
    \[
        (1-\alpha)R_i^\intercal R_j \quad \text{ and } \quad (1-\alpha)R_i^\intercal R_jB_r
    \]
    are at most $(\gamma - \beta)/(1-\alpha)$ in absolute value. For $r \geq r_0$ define
    \[
    Q = (1-\alpha) \begin{pmatrix}
					 R_{0}B_r & R_1 & -R_1 & \cdots & R_p & -R_p
			      \end{pmatrix}^\intercal
                    \begin{pmatrix}
					R_{0}B_r & R_1 & -R_1 & \cdots & R_p & -R_p 
			      \end{pmatrix}.
    \]
This defines a $(K + 2pr) \times (K + 2pr)$ positive semidefinite matrix of rank at most $r$. Note that the top-left  $K \times K$ block of $Q$ is given by $ (1-\alpha)I_{K} - \varepsilon (p+1) J_{K}$. Remaining entries are either equal to $\pm(1-\alpha)$ or bounded in absolute value by $\gamma - \beta$.
    
Define $T = T_{\alpha, \gamma}(p+1, s)$ where $s = (K, r, \ldots, r) \in \NN^{2p+1}$. This matrix itself is not positive semidefinite. To fix this, we define the $(K + 2pr) \times (K + 2pr)$ matrix $E$ by extending $\varepsilon \gamma(p+1) J_{K}$ with zeroes.

As in the proof of \cref{lem:psd-template}, to show that $T+E$ is positive semidefinite, it suffices to show the positive semidefiniteness of
\[
    (\alpha + \gamma)I_{p+1} - \gamma J_{p+1} + \varepsilon(p+1)\gamma e_1 e_1^\intercal. 
\]
Since $(\alpha + \gamma)I_{p+1} - \gamma J_{p+1}$ has a single negative eigenvalue with eigenvector $\pmb{1}$, it suffices to test the positive semidefiniteness against the vector $\pmb{1}$. This yields
\[
  \pmb{1}^\intercal \Bigl[(\alpha + \gamma)I_{p+1} - \gamma J_{p+1} + \varepsilon(p+1)\gamma e_1 e_1^\intercal\Bigr] \pmb{1}=  (\alpha + \gamma)(p+1) - \gamma(p+1)^2 + \varepsilon (p+1) \gamma = 0.
\]
Now take $M = Q + T + E$. Clearly $M$ is a $(K + 2pr) \times (K + 2pr)$ positive semidefinite matrix of rank at most $(r + p + 2)$. Moreover, observe that the top-left $K \times K$ block of $M$ is given by
\[
    \Bigl[(1-\alpha)I_{K} - \varepsilon (p+1) J_{K}\Bigr] + \alpha J_{K} + \varepsilon (p+1) \gamma J_{K} =  (1-\alpha)I_{K} + \alpha J_{K}.
\]
Hence, as earlier, the diagonal entries of $M$ are equal to $1$ and all remaining entries are either $\alpha$ or $\alpha - (1-\alpha) \leq -\beta$ or at most $-\gamma + (\gamma - \beta) \leq -\beta$.

Therefore, by \cref{lem:gram} we see that $M$ is a Gram matrix of an $L$-code of size $2pr + K$ in $\RR^{r+p+2}$ whenever $r \geq r_0$. Taking $r = d-p-2$ gives $N_L(d) \geq 2pd + M$.
\end{proof}

\section{Global Structure Theorem}\label{sec:global}

In this section, we will prove \cref{thm:structure}. 

Recall that given a spherical code $C$, its \emph{Gram graph} $G_C$ is the edge-weighted complete graph on vertex set $C$ such that edge $uv$ has weight $\ang{u,v}$. The \emph{negative graph} $G_C^-$ is the simple graph consisting of negative edges of $G_C$ (and then forgetting the edge weights). Likewise the \emph{positive graph} $G_C^+$ consisting of positive edges of $G_C$ (and then forgetting the edge weights).
We shall talk about the negative and positive degrees to refer to degrees in $G_C^-$ and $G_C^+$. For example, given $A \subseteq C$, the \emph{maximum positive degree in $A$} is the maximum degree of $G_A^+$. Given $A,B \subseteq C$, the \emph{maximum negative degree from $B$ to $A$} is the maximum number of neighbors that $b \in B$ can have in $A$ in the negative graph $G_C^-$.

\subsection{Forbidden local structures}

The next three lemmas use the positive semidefiniteness of the Gram matrix to deduce forbidden induced subgraphs in the Gram graph.

\begin{lemma}\label{lem:negative-clique}
The largest negative clique in any spherical $[-1,-\beta]\cup\{\alpha\}$ has at most $\beta^{-1} + 1$ vertices.
\end{lemma}

\begin{proof}
Let $V$ be the set of vectors corresponding to the largest negative clique and $M_V$ be the Gram matrix of $V$. Then,
\[
	0 \leq \mathbf{1}_V^\intercal M_V\mathbf{1}_V \leq |V| - \beta |V| (1+|V|).
\]
Rearranging the terms yields $|V| \leq \beta^{-1} + 1$.
\end{proof}

\begin{lemma}\label{lem:very-strong-positive-clique}
Let $ \alpha, \beta \in(0,1)$ and $L \subseteq [-1,-\beta]\cup\{\alpha\}$. Then for all $\Delta_0$, there exists  $K=K(\alpha,\beta,\Delta_0)$ such that the following holds for all $\Delta \geq 1$: 

Let $C = A \sqcup B \sqcup \{v\}$ be a spherical $L$-code such that for some $\gamma \in [-1,1]$,
\begin{itemize}
    \item[(a)] maximum negative degree from $B$ to $A$ is at most $\Delta_0$, and
    \item[(b)] all edges from $v$ to $A$ have weight at least $\gamma + K\Delta^{-1/2}$, and
    \item[(c)] all edges from $v$ to $B$ have weight at most $\gamma - K\Delta^{-1/2}$.
\end{itemize}
Then either $|A| < \Delta$ or $|B| < \Delta$.
\end{lemma}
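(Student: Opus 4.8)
The plan is to use positive semidefiniteness of the Gram matrix tested against a cleverly chosen vector supported on $A$, $B$, and $\{v\}$. Suppose for contradiction that $|A| \ge \Delta$ and $|B| \ge \Delta$. Shrink $A$ and $B$ so that $|A| = |B| = \Delta$ (this only loses information in hypothesis (a), which is a ``for all'' statement, so it is safe). Consider a test vector $x = a\mathbf{1}_A - b\mathbf{1}_B + t e_v$ for scalars $a,b,t \ge 0$ to be chosen, and expand $x^\intercal M_C x \ge 0$. The diagonal contributes $a^2\Delta + b^2\Delta + t^2$. The $A$--$A$ and $B$--$B$ blocks contribute something bounded: edges inside $A$ (resp. $B$) have weight in $[-1,\alpha]$, so these blocks contribute at most $a^2\Delta^2\alpha$ and $b^2\Delta^2\alpha$ (using the crude bound that all such edges have weight $\le \alpha < 1$; more care may be needed if $\alpha$ is close to $1$, but a bound of the form $a^2\Delta^2$ suffices). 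The key gain comes from the three cross terms: $A$--$B$ edges appear with coefficient $-2ab$, edges from $v$ to $A$ with coefficient $+2at$, and edges from $v$ to $B$ with coefficient $-2bt$.

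The heart of the argument is that hypotheses (b) and (c) force the $v$-to-$A$ and $v$-to-$B$ contributions to be aligned in sign with a ``$\gamma$-centered'' comparison, so that after subtracting off $\gamma$ from every cross-edge weight we gain a term of order $K\Delta^{-1/2}$ per edge, i.e. of total order $K\Delta^{-1/2}\cdot at\cdot\Delta = K\Delta^{1/2}at$ from the $v$-$A$ edges and similarly from the $v$-$B$ edges. More precisely, I would first use the identity $2at\sum_{u\in A}\langle v,u\rangle - 2bt\sum_{w\in B}\langle v,w\rangle = 2t\big[\sum_{u\in A} a(\langle v,u\rangle-\gamma) - \sum_{w\in B} b(\langle v,w\rangle - \gamma)\big] + 2t\gamma(a\Delta - b\Delta)$, and choose $a\Delta = b\Delta$ (i.e. $a = b$) to kill the stray $\gamma$ term. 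With $a = b$, hypotheses (b), (c) make both bracketed sums at least $a\Delta\cdot K\Delta^{-1/2}$ in absolute value with the right sign, giving a negative contribution to $x^\intercal M_C x$ of size $\gtrsim tat\,K\Delta^{1/2}$... wait, $\ge 4 t a K\Delta^{1/2}$. Meanwhile the $A$--$B$ block contributes $-2a^2\sum_{u\in A, w\in B}\langle u,w\rangle$; since most of these edges have weight in $[-1,-\beta]$ but hypothesis (a) allows up to $\Delta_0$ negative neighbors... actually hypothesis (a) bounds the negative degree from $B$ to $A$ by $\Delta_0$, so each $w \in B$ has at most $\Delta_0$ neighbors in $A$ with weight in $[-1,-\beta]$ and all remaining edges have weight exactly $\alpha > 0$; thus $-2a^2\sum \langle u,w\rangle \le -2a^2(\alpha\Delta^2 - (\beta+\alpha+1)\Delta_0\Delta) \le -2a^2\alpha\Delta^2 + O_{\Delta_0}(a^2\Delta)$, which is very negative and is in fact the term we must dominate. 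Hmm — so the $A$--$B$ block actually helps (it is negative), and the danger is instead the positive diagonal contributions from inside $A$ and inside $B$ being too large. Let me recompute: inside-$A$ contributes $+a^2\sum_{u\neq u'\in A}\langle u,u'\rangle \le a^2\alpha\Delta^2$, and this is the dominant positive term; it is cancelled by the (negative) $A$--$B$ block $-2a^2\sum_{u,w}\langle u,w\rangle$, which has magnitude $\approx 2a^2\alpha\Delta^2$ in the positive direction... no: $\langle u,w\rangle$ is mostly $\alpha > 0$ so $-2a^2\sum\langle u,w\rangle \approx -2a^2\alpha\Delta^2$ is negative. So combining inside-$A$, inside-$B$, and $A$--$B$: $a^2\alpha\Delta^2 + a^2\alpha\Delta^2 - 2a^2\alpha\Delta^2 + O_{\Delta_0}(a^2\Delta) = O_{\Delta_0}(a^2\Delta)$. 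Excellent — the leading $\Delta^2$ terms cancel, leaving $O_{\Delta_0}(a^2\Delta) + 2a^2\Delta + t^2$ positive, against $-cat K\Delta^{1/2}$ negative.

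Now optimize over $a$ and $t$. Setting $a = t$ and choosing $K$ large depending on $\alpha,\beta,\Delta_0$: the positive terms are $O_{\Delta_0,\alpha}(a^2\Delta)$ and the negative term is $\ge c\,K a^2 \Delta^{1/2}$ — that has the wrong power of $\Delta$. The fix is to scale $t$ relative to $a$: take $a = 1$ and $t$ of order $\Delta^{1/2}$; then diagonal-in-$v$ gives $t^2 \approx \Delta$, the negative cross term gives $\gtrsim K t\Delta^{1/2} \approx K\Delta$, and the inside-block remainder is $O_{\Delta_0,\alpha}(\Delta)$; choosing $K = K(\alpha,\beta,\Delta_0)$ large enough makes $K\Delta$ beat the $O_{\Delta_0,\alpha}(\Delta)$ and the $t^2 = \Delta$ term, yielding $x^\intercal M_C x < 0$, a contradiction. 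I expect the main obstacle to be the bookkeeping of the inside-$A$/inside-$B$/$A$--$B$ cancellation when edges inside $A$ or $B$ are not all weight $\alpha$ — one must split those blocks by sign and carefully use hypothesis (a) (and possibly the negative-clique bound, \cref{lem:negative-clique}, to control negative cliques inside $A$ or $B$) so that the error is genuinely $O_{\Delta_0,\alpha,\beta}(\Delta)$ and not $O(\Delta^2)$ with a bad constant; getting that error term to have the right order in $\Delta$ (linear, not quadratic) is exactly what makes the choice $t \sim \Delta^{1/2}$, $K$ large work, and is the crux of the proof.
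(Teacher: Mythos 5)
Your overall strategy is the same as the paper's (test positive semidefiniteness of $M_C$ against a signed combination of $\mathbf{1}_A$, $\mathbf{1}_B$, $e_v$, center the $v$-edges at $\gamma$, choose the $A$ and $B$ coefficients equal so the $\gamma$ terms cancel, use hypothesis (a) to cancel the $\Delta^2$-order contributions of the three blocks, and scale the $v$-coefficient like $\Delta^{1/2}$ with $K$ large). However, as written there is a genuine gap: the relative sign of the $v$-coefficient is wrong, and this breaks the contradiction. With $x = a\mathbf{1}_A - b\mathbf{1}_B + t e_v$ and $a=b$, $t \ge 0$, hypotheses (b) and (c) give a \emph{lower} bound on the $v$-cross terms, namely $2at\sum_{u\in A}(\langle v,u\rangle-\gamma) - 2at\sum_{w\in B}(\langle v,w\rangle-\gamma) \ge +4atK\Delta^{1/2}$; this is a positive contribution, not the ``negative contribution of size $\ge 4taK\Delta^{1/2}$'' you claim. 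Indeed, in your final bookkeeping every term of the upper bound on $x^\intercal M_C x$ is nonnegative (the $v$-terms can only be bounded above by $2at\Delta\alpha + 2at\Delta$, since $\langle v,u\rangle$ may be as large as $\alpha$ and $\langle v,w\rangle$ as small as $-1$), so no inequality of the form $x^\intercal M_C x < 0$ can be extracted, and positive semidefiniteness is never contradicted.

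The repair is small: take the $v$-coefficient with sign opposite to that of $\mathbf{1}_A$ (equivalently, the paper's vector $w = t\mathbf{1}_v - \mathbf{1}_A/|A| + \mathbf{1}_B/|B|$). Then (b) and (c) are used in the correct direction and give the upper bound $-2at\,\Delta(\gamma + K\Delta^{-1/2}) + 2at\,\Delta(\gamma - K\Delta^{-1/2}) = -4atK\Delta^{1/2}$ on the $v$-cross terms, and the rest of your computation (block cancellation via (a), $a=b=1$, $t \asymp K\Delta^{1/2}$, then $K$ large in terms of $\Delta_0$) goes through and recovers essentially the paper's proof, which normalizes by $1/|A|,1/|B|$, takes $t = 2K\Delta^{-1/2}$ and $K = \sqrt{1+\Delta_0}$. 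Two further remarks: the extra care you anticipate for the inside-$A$ and inside-$B$ blocks is unnecessary, since only upper bounds are needed there and every off-diagonal weight is at most $\alpha$, so neither sign-splitting nor \cref{lem:negative-clique} is used; and there is no need to shrink $A,B$ to size exactly $\Delta$, since dividing the indicator vectors by $|A|$ and $|B|$ handles arbitrary sizes at least $\Delta$.
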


\begin{figure}[h]
\definecolor{ffzzcc}{rgb}{1,0,1}
\definecolor{qqzzff}{rgb}{0,0.6,1}
\definecolor{wwccff}{rgb}{0.4,0.8,1}
\begin{tikzpicture}

\fill[line width=0pt,color=wwccff,fill=wwccff!30] 
(-1.5, 0.5) rectangle (-0.5, -0.5);
\node at (-1, 0) {\tiny $\alpha$};

\fill[line width=0pt,color=wwccff,fill=wwccff] 
(-1.5, 1.5) rectangle (-0.5, 0.5);
\node at (-1, 1) {\small $A$};

\fill[line width=0pt,color=wwccff,fill=wwccff] 
(-1.5, -1.5) rectangle (-0.5, -0.5);
\node at (-1, -1) {\small $B$};

\draw [fill=black] (1,0) circle (2pt)  node[below right] {\large$v$};

\fill[line width=0pt,color=wwccff,fill=wwccff!30]  (1, 0) -- (-0.5, 1.5) -- (-0.5, 0.5) -- cycle;
\node[rotate=-30] at (0,.6) {\tiny $> \gamma + \varepsilon$};

\fill[line width=0pt,color=wwccff,fill=wwccff!30]  (1, 0) -- (-0.5, -0.5) -- (-0.5, -1.5) -- cycle;
\node[rotate=30] at (0,-.6) {\tiny $< \gamma - \varepsilon$};

\end{tikzpicture}
\caption{Forbidden configuration from \cref{lem:very-strong-positive-clique}.}
\end{figure}
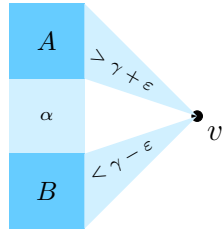

\begin{proof}[Proof of \cref{lem:very-strong-positive-clique}]
Let $K = \sqrt{1 + \Delta_0}$. Suppose the contrary that $C = A \cup B \cup \{v\}$ is a spherical $L$-code satisfying the conditions $(a), (b), (c)$ for some $\Delta \geq 1$ and $\gamma \in [-1,1]$, such that $|A| \geq \Delta $ and $|B| \geq \Delta$. Note that
\begin{align*}
	\left(\frac{\mathbf{1}_A}{|A|} - \frac{\mathbf{1}_B}{|B|}\right)^\intercal M_C \left(\frac{\mathbf{1}_A}{|A|} - \frac{\mathbf{1}_B}{|B|}\right) &= \frac{1}{|A|^2}\mathbf{1}_A^\intercal M_C \mathbf{1}_A + \frac{1}{|B|^2}\mathbf{1}_B^\intercal M_C \mathbf{1}_B - \frac{2}{|A||B|}\mathbf{1}_A^\intercal M_C \mathbf{1}_B  \\
	&\leq \left(\alpha + \frac{1}{|A|}\right) + \left(\alpha + \frac{1}{|B|}\right) - 2\left(\alpha - \frac{2\Delta_0}{|A|}\right) \\
        &\leq \left(\frac{2 + 4\Delta_0 }{\Delta}\right).
\end{align*}
Let $w = t\mathbf{1}_v - \mathbf{1}_A/|A| + \mathbf{1}_B/|B|$ for some $t>0$. Since the Gram matrix $M_C$ is positive semidefinite,
\begin{align*}
	0 \leq w^\intercal M_Cw &\leq t^2  - 2t\left(\gamma + K\Delta^{-1/2}\right) + 2t\left(\gamma -K \Delta^{-1/2}\right) + \frac{2 + 4\Delta_0 }{\Delta} \\
 &= \left(t - \frac{2K}{\sqrt{\Delta}}\right)^2 + \frac{2 + 4\Delta_0 - 4K^2}{\Delta}.
\end{align*}
Since $K = \sqrt{1 + \Delta_0}$, the inequality fails for $t = 2K\Delta^{-1/2}$. This is a contradiction.
\end{proof}

The next lemma roughly states that if there are $q$ not-too-small positive cliques, with mostly negative edges between each pair of cliques, then $q \le \floor{\alpha/\beta} + 1$.

\begin{lemma}\label{lem:many-cliques}
    Let $\alpha, \beta \in (0,1)$ and $L \subseteq [-1,-\beta]\cup\{\alpha\}$. For all $\Delta_0$ there exists $P = P(\alpha, \beta, \Delta_0)$ such that if $A_1 \sqcup \ldots \sqcup A_q$ is a spherical $L$-code satisfying 

    \begin{itemize}
        \item[(a)] for all $1 \leq i \leq q$, the $L$-code $A_i$ is a positive clique of size $P$;
    
        \item[(b)] for all $1 \leq i < j \leq q$, the maximum positive degree from $A_i$ to $A_j$ is at most $\Delta_0$,
    \end{itemize}
    then $q \leq \lfloor \alpha/\beta\rfloor + 1$.
\end{lemma}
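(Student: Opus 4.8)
The plan is to test the positive semidefiniteness of the Gram matrix $M_C$ of $C = A_1 \sqcup \cdots \sqcup A_q$ against the sum of the clique centroids and then let the clique size $P$ grow. Concretely, for each $i$ I would set $a_i = \tfrac1P \sum_{v \in A_i} v$. Since $A_i$ is a positive clique and $\alpha$ is the only positive value in $L \subseteq [-1,-\beta]\cup\{\alpha\}$, every edge inside $A_i$ has weight exactly $\alpha$, so
\[
\ang{a_i,a_i} = \frac{1}{P^2}\bigl(P + P(P-1)\alpha\bigr) = \alpha + \frac{1-\alpha}{P}.
\]
For $i \ne j$, hypothesis (b) says each vertex of $A_i$ has at most $\Delta_0$ positive neighbours in $A_j$, so at most $P\Delta_0$ of the $P^2$ edges between $A_i$ and $A_j$ carry weight $\alpha$ and the remaining ones carry weight at most $-\beta$; since $\alpha+\beta>0$, for $P \ge \Delta_0$ this gives
\[
\ang{a_i,a_j} \le \frac{1}{P^2}\bigl(P\Delta_0(\alpha+\beta) - P^2\beta\bigr) = -\beta + \frac{\Delta_0(\alpha+\beta)}{P}.
\]

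Next I would expand $0 \le \snorm{a_1 + \cdots + a_q}^2 = \sum_i \ang{a_i,a_i} + \sum_{i\ne j}\ang{a_i,a_j}$ — this is just positive semidefiniteness of $M_C$ applied to $\tfrac1P\mathbf{1}_C$, paralleling \cref{lem:negative-clique} — plug in the two estimates above, and divide by $q$ to obtain
\[
(q-1)\Bigl(\beta - \frac{\Delta_0(\alpha+\beta)}{P}\Bigr) \le \alpha + \frac{1-\alpha}{P}.
\]
Finally I would choose $P = P(\alpha,\beta,\Delta_0)$ large enough that $\beta - \Delta_0(\alpha+\beta)/P > 0$ and
\[
\frac{\alpha + (1-\alpha)/P}{\,\beta - \Delta_0(\alpha+\beta)/P\,} < \floor{\alpha/\beta} + 1 ,
\]
which is possible because the left-hand side tends to $\alpha/\beta$ as $P \to \infty$ while $\floor{\alpha/\beta}+1 > \alpha/\beta$. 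The displayed inequality then forces $q-1$ to be an integer strictly below $\floor{\alpha/\beta}+1$, i.e.\ $q \le \floor{\alpha/\beta}+1$.

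The individual computations are routine; the one point that needs care is that $q$ appears on both sides of the penultimate display, so the argument must use $\alpha+\beta>0$ (which keeps the inequality linear in $q$ after dividing by $q$, rather than quadratic) together with the \emph{strict} gap $\floor{\alpha/\beta}+1-\alpha/\beta>0$ in order to land exactly at the threshold $\floor{\alpha/\beta}+1$ — otherwise one only extracts a crude bound on $q$ depending on $\alpha,\beta,\Delta_0$.
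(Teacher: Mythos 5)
Your proof is correct and is essentially the paper's argument: both test positive semidefiniteness of the Gram matrix against the all-ones vector (your centroid computation is just $\mathbf{1}_C^\intercal M_C \mathbf{1}_C \ge 0$ rescaled by $P^{-2}$), bound the within-clique and cross-clique contributions using (a) and (b), and take $P$ large so the resulting bound on $q$ drops strictly below $\floor{\alpha/\beta}+2$.
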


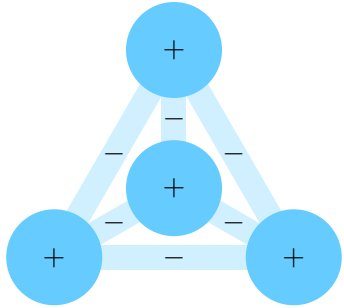
\begin{figure}[h]
\definecolor{wwccff}{rgb}{0.4,0.8,1}
\begin{tikzpicture}[scale = 0.6]
\fill[color = wwccff!30] (-0.7524102376159736,2.3116585248889) -- (-2.3781601262366365,-0.5042228826015307) -- (-1.901151149655595,-0.7796241436364628) -- (-0.2754012610349321,2.036257263853968) -- cycle;
\node at (-1.3267806936357842,0.7660171906262186) {$-$};

\fill[color = wwccff!30] (0.2754012610349319,2.0362572638539675) -- (0.7524102376159734,2.3116585248888994) -- (2.378160126236635,-0.5042228826015329) -- (1.9011511496555942,-0.7796241436364647) -- cycle;
\node at (1.3267806936357835,0.7660171906262179) {$-$};

\fill[color = wwccff!30] (-1.6257498886206623,-1.2566331202175038) -- (-1.6257498886206623,-1.807435642287369) -- (1.6257498886206634,-1.8074356422873683) -- (1.6257498886206634,-1.2566331202175054) -- cycle;
\node at (0,-1.5320343812524366) {$-$};

\fill[color = wwccff!30] (-1.6257498886206623,-1.2566331202175038) -- (-0.6948350198094596,-0.7191691701167311) -- (-0.9702362808443916,-0.24216019353568968) -- (-1.901151149655595,-0.7796241436364628) -- cycle;
\node at (-1.3267806936357842,-0.766017190626218) {$-$};

\fill[color = wwccff!30] (-0.2754012610349321,0.9613293636524214) -- (0.2754012610349319,0.9613293636524213) -- (0.2754012610349319,2.0362572638539675) -- (-0.2754012610349321,2.036257263853968) -- cycle;
\node at (0,1.5320343812524366) {$-$};

\fill[color = wwccff!30] (1.9011511496555942,-0.7796241436364647) -- (1.6257498886206634,-1.2566331202175054) -- (0.6948350198094596,-0.7191691701167312) -- (0.9702362808443913,-0.24216019353569118) -- cycle;
\node at (1.3267806936357835,-0.7660171906262188) {$-$};

\fill [color = wwccff] (0,0) circle (1.064068762504873cm);
\node at (0,0) {$+$};

\fill [color = wwccff]  (0,3.0640687625048733) circle (1.0640687625048733cm);
\node at (0,3.0640687625048733) {$+$};

\fill [color = wwccff]  (-2.6535613872715684,-1.532034381252436) circle (1.0640687625048733cm);
\node at (-2.6535613872715684,-1.532034381252436) {$+$};

\fill [color = wwccff]  (2.653561387271567,-1.5320343812524375) circle (1.0640687625048733cm);
\node at (2.653561387271567,-1.5320343812524375) {$+$};
\end{tikzpicture}

\caption{Forbidden structure from \cref{lem:many-cliques}.}
\end{figure}

\begin{proof}
	We have
	\begin{align*}
		0 \leq \mathbf{1}_C^\intercal M_C \mathbf{1}_C &= \sum_{i = 1}^q\mathbf{1}_{A_i}^\intercal M_C \mathbf{1}_{A_i} + 2\sum_{1\leq i < j \leq q}\mathbf{1}_{A_i}^\intercal M_C \mathbf{1}_{A_j} \\
        &\leq  q\left(\alpha P^2 + P\right) + q(q-1)\left(-\beta P^2 + 2 \Delta_0 P\right).
    \end{align*}
Rearranging, we see that
\[
	q \leq 1 + \frac{\alpha + 1/P}{\beta - 2\Delta_0/P}.
	\]
Choose $P$ such that the right hand side is strictly less than $2 + \floor{\alpha/\beta}$. Then $q \leq 1 + \floor{\alpha/\beta}$.
\end{proof}

\subsection{Deducing global structure}
In the remainder of this section, we deduce the global structure theorem, \cref{thm:structure}.

\begin{lemma}\label{lem:strong-positive-clique}
Let $\alpha, \beta \in (0,1)$ and $L \subseteq [-1,-\beta]\cup\{\alpha\}$.
For all $\Delta_0$ there exists $\Delta$ such that if $C = A \sqcup B \sqcup \{v\}$ is an $L$-code satisfying 
\begin{itemize}
    \item[(a)] the maximum negative degree from $B$ to $A$ is at most $\Delta_0$, and
    \item[(b)] all edges from $v$ to $A$ have positive weight, and
    \item[(c)] all edges from $v$ to $B$ have negative weight.
\end{itemize}
Then either $|A| \leq \Delta$ or $|B| \leq \Delta$.
\end{lemma}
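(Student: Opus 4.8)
The plan is to obtain this as an immediate consequence of the quantitative forbidden-configuration lemma, \cref{lem:very-strong-positive-clique}. The key point is that in a spherical $L$-code with $L \subseteq [-1,-\beta]\cup\{\alpha\}$, the only positive value an inner product can take is $\alpha$, and every negative inner product is at most $-\beta$. So hypothesis (b) here says that every edge from $v$ to $A$ has weight exactly $\alpha$, and hypothesis (c) says that every edge from $v$ to $B$ has weight at most $-\beta$. This is precisely the ``hard-gap'' instance of \cref{lem:very-strong-positive-clique}, with the threshold $\gamma$ placed strictly between $-\beta$ and $\alpha$.

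Concretely, I would apply \cref{lem:very-strong-positive-clique} with the given $\Delta_0$ to obtain the constant $K = K(\alpha,\beta,\Delta_0)$, then set $\gamma = (\alpha-\beta)/2$ (which lies in $[-1,1]$ and leaves room exactly $(\alpha+\beta)/2$ on each side), and choose $\Delta = \Delta(\alpha,\beta,\Delta_0)$ to be any integer with $\Delta \ge 1$ and $K\Delta^{-1/2} \le (\alpha+\beta)/2$, for instance $\Delta = \max\{1,\lceil 4K^2/(\alpha+\beta)^2\rceil\}$. With these choices, hypothesis (a) of \cref{lem:very-strong-positive-clique} is exactly our hypothesis (a); moreover $\alpha = \gamma + (\alpha+\beta)/2 \ge \gamma + K\Delta^{-1/2}$ and $-\beta = \gamma - (\alpha+\beta)/2 \le \gamma - K\Delta^{-1/2}$, so hypotheses (b) and (c) of \cref{lem:very-strong-positive-clique} hold as well. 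That lemma then gives $|A| < \Delta$ or $|B| < \Delta$, which yields the claim (a strict inequality $< \Delta$ of course implies $\le \Delta$).

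There is essentially no obstacle: all the work is in \cref{lem:very-strong-positive-clique}, and this statement is just the clean qualitative repackaging that will be invoked when assembling the global structure. The only lines requiring a moment's care are the placement of $\gamma$ in $[-1,1]$ with symmetric slack $(\alpha+\beta)/2$, and the observation that $\Delta$ may be taken to depend only on $\alpha,\beta,\Delta_0$ and not on the code, which holds because $K$ has that dependence. (Alternatively, one could reprove it from scratch by testing the positive semidefinite Gram matrix $M_C$ against a vector of the form $t\mathbf{1}_v - \mathbf{1}_A/|A| + \mathbf{1}_B/|B|$, exactly as in the proof of \cref{lem:very-strong-positive-clique}, exploiting the fixed gap $\alpha - (-\beta) = \alpha + \beta$ between the two sides; but invoking the earlier lemma is cleaner, and that is the route I would take.)
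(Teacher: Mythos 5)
Your proposal is correct and is essentially the paper's own argument: the paper also proves this lemma by a one-line application of \cref{lem:very-strong-positive-clique}, taking $\gamma=0$ and $\Delta = \lceil K^2/\min(\alpha,\beta)^2\rceil$, while you take $\gamma=(\alpha-\beta)/2$ with slack $(\alpha+\beta)/2$ — an equally valid (and marginally tighter) choice of parameters.
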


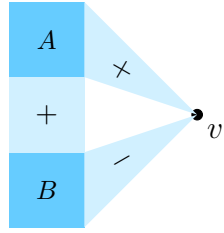
\begin{figure}[h]
\definecolor{ffzzcc}{rgb}{1,0,1}
\definecolor{qqzzff}{rgb}{0,0.6,1}
\definecolor{wwccff}{rgb}{0.4,0.8,1}
\begin{tikzpicture}
\fill[line width=0pt,color=wwccff,fill=wwccff] 
(-1.5, 1.5) rectangle (-0.5, 0.5);
\node at (-1, 1) {\small $A$};

\fill[line width=0pt,color=wwccff,fill=wwccff] 
(-1.5, -1.5) rectangle (-0.5, -0.5);
\node at (-1, -1) {\small $B$};

\fill[line width=0pt,color=wwccff,fill=wwccff!30] 
(-1.5, 0.5) rectangle (-0.5, -0.5);
\node at (-1, 0) {$+$};

\draw [fill=black] (1,0) circle (2pt)  node[below right] {\large$v$};

\fill[line width=0pt,color=wwccff,fill=wwccff!30]  (1, 0) -- (-0.5, 1.5) -- (-0.5, 0.5) -- cycle;
\node[rotate=-30] at (0,.6) { $+ $};

\fill[line width=0pt,color=wwccff,fill=wwccff!30]  (1, 0) -- (-0.5, -0.5) -- (-0.5, -1.5) -- cycle;
\node[rotate=30] at (0,-.6) {$-$};

\end{tikzpicture}
\caption{Forbidden configuration from \cref{lem:strong-positive-clique}.}
\end{figure}

\begin{proof}
	Take $\gamma=0$ and $\Delta = \lceil K^2/\min(\alpha,\beta)^2 \rceil$ in \cref{lem:very-strong-positive-clique}.
\end{proof}

The next lemma states roughly that when given a large positive clique, any other vertex is connected to the clique either mostly via positive edges or mostly via negative edges.

\begin{lemma}\label{lem:positive-clique}
Let $\alpha, \beta \in (0,1)$ and $L \subseteq [-1,-\beta]\cup\{\alpha\}$. For all $\Delta_0$ there exists $\Delta$ such that if $S\sqcup\{v\}$ is an $L$-code where maximum negative degree in $S$ is at most $\Delta_0$, then either
\begin{enumerate}
    \item[(a)] the negative degree from $v$ to $S$ is at  most $\Delta$, or
    \item[(b)]  the positive degree from $v$ to $S$ is at  most $\Delta$.
\end{enumerate}
\end{lemma}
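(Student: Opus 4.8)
The plan is to reduce \cref{lem:positive-clique} to \cref{lem:strong-positive-clique} by a cleaning argument. Suppose for contradiction that $v$ has at least $\Delta$ positive neighbors \emph{and} at least $\Delta$ negative neighbors in $S$, where $\Delta$ is a large parameter to be chosen in terms of $\Delta_0$. Write $A_0 \subseteq S$ for the positive neighborhood of $v$ and $B_0 \subseteq S$ for the negative neighborhood; these are disjoint and each has size at least $\Delta$. We want to pass to subsets $A \subseteq A_0$ and $B \subseteq B_0$, still of size $\geq \Delta'$ for a slightly smaller $\Delta'$, on which the maximum negative degree from $B$ to $A$ is at most some absolute constant $\Delta_0'$ (depending only on $\Delta_0$). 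Once we have such $A$ and $B$, the triple $A \sqcup B \sqcup \{v\}$ satisfies hypotheses (a), (b), (c) of \cref{lem:strong-positive-clique} with parameter $\Delta_0'$, so that lemma forces $|A| \le \Delta''$ or $|B| \le \Delta''$ for the $\Delta''$ it produces; choosing the original $\Delta$ large enough that $\Delta' > \Delta''$ gives the contradiction.

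So the heart of the matter is the cleaning step: given that $S$ has maximum negative degree at most $\Delta_0$ (inside $S$), and $A_0, B_0 \subseteq S$ are large disjoint sets, find large $A \subseteq A_0$, $B \subseteq B_0$ with bounded negative degree from $B$ to $A$. The key observation is that each vertex of $B_0$ already has at most $\Delta_0$ negative neighbors in all of $S$, hence at most $\Delta_0$ negative neighbors in $A_0$. That is, the ``max negative degree from $B_0$ to $A_0$'' is \emph{already} at most $\Delta_0$, so in fact no cleaning is needed at all and we may take $A = A_0$, $B = B_0$ with $\Delta_0' = \Delta_0$. (If one wanted symmetry in the other direction as well — bounding negative degree from $A$ to $B$ — the same argument applies, but \cref{lem:strong-positive-clique} only needs the one-sided bound, so this is moot.) Thus the proof is genuinely just: restrict to the two neighborhoods and quote \cref{lem:strong-positive-clique}.

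Concretely, the steps in order: (1) let $\Delta_0$ be given; apply \cref{lem:strong-positive-clique} with input parameter $\Delta_0$ to obtain a threshold $\Delta^\ast$; (2) set $\Delta = \Delta^\ast + 1$ (or any value exceeding $\Delta^\ast$); (3) assume $v$ has more than $\Delta$ positive and more than $\Delta$ negative neighbors in $S$, and call these sets $A$ and $B$; (4) note $|A|, |B| > \Delta^\ast$, all edges from $v$ to $A$ are positive, all edges from $v$ to $B$ are negative, and the max negative degree from $B$ to $A$ is at most $\Delta_0$ since $S$ has max negative degree at most $\Delta_0$; (5) \cref{lem:strong-positive-clique} then yields $|A| \le \Delta^\ast$ or $|B| \le \Delta^\ast$, a contradiction; (6) conclude that $v$ has negative degree at most $\Delta$ to $S$ or positive degree at most $\Delta$ to $S$.

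I do not expect a real obstacle here — the only thing to be careful about is the bookkeeping of which quantity bounds which (that the ``negative degree inside $S$'' hypothesis transfers to ``negative degree from $B$ to $A$'' for free because $A \subseteq S$), and making sure the threshold $\Delta$ returned by this lemma is defined \emph{after} invoking \cref{lem:strong-positive-clique} so there is no circularity in the quantifier order. If anything, the subtle point is purely notational: the lemma statement quantifies ``for all $\Delta_0$ there exists $\Delta$,'' so $\Delta$ is allowed to depend on $\Delta_0$, and indeed it does, through the dependence of \cref{lem:strong-positive-clique}'s output on its input.
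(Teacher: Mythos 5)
Your proof is correct and follows essentially the same route as the paper: the paper also takes $A$ and $B$ to be the positive and negative neighborhoods of $v$ in $S$ and applies \cref{lem:very-strong-positive-clique} with $\gamma=0$, which is precisely what \cref{lem:strong-positive-clique} packages, and your observation that the negative-degree bound inside $S$ transfers for free to the negative degree from $B$ to $A$ is exactly the point being used.
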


\begin{proof}
	Let $A$ and $B$ to be the set of positive and negative neighbors of $v$ in $S$, respectively. Take $\gamma=0$ and $\Delta = \lceil K^2/\min(\alpha,\beta)^2 \rceil$ in \cref{lem:very-strong-positive-clique}.
\end{proof}

The following lemma is a more general version of \cref{lem:positive-clique}. It states that most edges from a vertex to a large positive clique have weights concentrated around a single value.

\begin{lemma}\label{lem:edges-to-positive-clique}
Let $ \alpha, \beta \in (0,1)$ and $L \subseteq [-1,-\beta]\cup\{\alpha\}$. For all $\Delta_0$ there exists $K = K(L, \Delta_0)$ such that the following holds for all $\Delta \geq 1$:

If $S\sqcup\{v\}$ is an $L$-code where the maximum negative degree in $S$ is at most $\Delta_0$, then there exists $\gamma \in \RR$ such that for all but at most $2\Delta$ many $s \in S$
\[
	|\gamma - \langle s, v \rangle| \leq K \Delta^{-1/2}.
\]
\end{lemma}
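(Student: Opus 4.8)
The plan is to reduce the statement to the ``two well-separated large clusters'' obstruction already proved in \cref{lem:very-strong-positive-clique}, and then read off $\gamma$ from the sorted list of inner products. I would take $K = K(\alpha,\beta,\Delta_0)$ to be exactly the constant furnished by \cref{lem:very-strong-positive-clique} (it does not depend on $\Delta$), write $\varepsilon = K\Delta^{-1/2}$ and $n = \abs{S}$, and dispose of the case $n \le 2\Delta$ immediately, since then the conclusion is vacuous (every vertex of $S$ may be listed among the $\le 2\Delta$ exceptions). So assume $n > 2\Delta$, and relabel $S = \{s_1,\dots,s_n\}$ so that $x_i := \langle s_i,v\rangle$ satisfies $x_1 \ge x_2 \ge \cdots \ge x_n$; all the $x_i$ lie in $L \subseteq [-1,1]$.

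The heart of the argument is the claim that $x_\Delta - x_{n-\Delta+1} \le 2\varepsilon$. To prove it I would argue by contradiction: if the gap exceeds $2\varepsilon$, set $\gamma = \tfrac12(x_\Delta + x_{n-\Delta+1}) \in [-1,1]$, so that $x_\Delta > \gamma + \varepsilon$ and $x_{n-\Delta+1} < \gamma - \varepsilon$. Let $A = \{s\in S : \langle s,v\rangle \ge \gamma+\varepsilon\}$ and $B = \{s\in S : \langle s,v\rangle \le \gamma-\varepsilon\}$; these are disjoint (as $\varepsilon>0$) and contain $s_1,\dots,s_\Delta$ and $s_{n-\Delta+1},\dots,s_n$ respectively, so $\abs{A},\abs{B} \ge \Delta$. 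Then $A \sqcup B \sqcup \{v\}$ is a sub-$L$-code of $S\sqcup\{v\}$, the maximum negative degree from $B$ to $A$ is at most the maximum negative degree in $S$ (hence $\le \Delta_0$), and the defining inequalities of $A$ and $B$ are precisely hypotheses (b) and (c) of \cref{lem:very-strong-positive-clique} for this $\gamma$ with threshold $K\Delta^{-1/2}$. That lemma then forces $\abs{A}<\Delta$ or $\abs{B}<\Delta$ --- a contradiction.

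With the claim in hand, I would simply take $\gamma = \tfrac12(x_\Delta + x_{n-\Delta+1})$: for every index $i$ with $\Delta \le i \le n-\Delta+1$ we have $x_{n-\Delta+1}\le x_i\le x_\Delta$, hence $\abs{x_i-\gamma}\le \tfrac12(x_\Delta - x_{n-\Delta+1})\le \varepsilon = K\Delta^{-1/2}$; the only indices that can violate this lie in $\{1,\dots,\Delta-1\}\cup\{n-\Delta+2,\dots,n\}$, a set of size at most $2(\Delta-1) < 2\Delta$. That is exactly the desired conclusion.

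The analytic substance --- the positive-semidefinite quadratic-form estimate excluding two far-apart large clusters --- is entirely absorbed into \cref{lem:very-strong-positive-clique}, so I do not expect a genuine obstacle here; the only points demanding care are verifying that the hypotheses of \cref{lem:very-strong-positive-clique} transfer correctly (in particular that the negative-degree bound ``from $B$ to $A$'' follows from the bound ``in $S$''), that $A$ and $B$ may be taken as proper subsets of $S$ rather than a partition of it, and that $K$ is genuinely uniform in $\Delta$.
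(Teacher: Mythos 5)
Your proposal is correct and follows essentially the same route as the paper: both proofs reduce the statement to \cref{lem:very-strong-positive-clique} by splitting $S$ into the vectors with inner product above $\gamma + K\Delta^{-1/2}$ and below $\gamma - K\Delta^{-1/2}$ and concluding that these two sets cannot both have size at least $\Delta$. The paper locates $\gamma$ via extremal thresholds $\theta^*,\theta_*$ while you use the order statistics $x_\Delta$ and $x_{n-\Delta+1}$ and their midpoint, which is only a cosmetic difference.
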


\begin{proof}
	Let $K$ be the constant from \cref{lem:very-strong-positive-clique}. For $\theta \in \RR$ define 	
\[
		S_+(\theta) \coloneqq \left\{s \in S : \langle s, v\rangle \geq \theta + K\Delta^{-1/2}\right\}
  \]
  and
  \[
		S_-(\theta) \coloneqq \left\{s \in S : \langle s, v\rangle \leq \theta - K\Delta^{-1/2}\right\}.
  \]
	By \cref{lem:very-strong-positive-clique} applied to $S_-(\theta) \sqcup S_+(\theta) \sqcup \{v\}$ we see that $\min\set{|S_-(\theta)|, |S_+(\theta)|} \leq \Delta$. Define
 \[
     \theta^* = \max \set{\theta \in \RR : |S_+(\theta)| > \Delta} \text{ and } \theta_* = \min \set{\theta \in \RR : |S_-(\theta)| > \Delta}.
 \]
 Then, $\theta_* > \theta^*$ or else if $\theta \in [\theta_*, \theta^*]$, then $\min\set{|S_-(\theta)|, |S_+(\theta)|} > \Delta$, which is a contradiction. Hence, if we take $\gamma \in (\theta^*, \theta_*)$ we have $ |S_-(\gamma)| \leq \Delta$ and $|S_+(\gamma)| \leq \Delta$. In other words, for all but at most $2\Delta$ many $s \in S$ we have $|\gamma - \ang{s,v}| \leq K \Delta^{-1/2}$. 
\end{proof}

The next lemma says that, given two mostly positive cliques, most edges between them have roughly equal weights.

\begin{lemma}\label{lem:two-positive-cliques}
Let $ \alpha, \beta \in (0,1)$ and $L \subseteq [-1,-\beta]\cup\{\alpha\}$. Then, for every $\Delta_0$ there exists a constant $K = K(L, \Delta_0) $ such that the following holds for all $\Delta \geq K$: 

Suppose $A \sqcup B$ is an $L$-code with $\min\set{|A|,|B|} \geq \Delta^2$ such that the maximum negative degrees in $A$ and in $B$ are at most $\Delta_0$. Then, there exist some $\gamma \in \RR$ and $R \subseteq A \cup B$  of size at most $\Delta$ such that
\begin{enumerate}
	\item for each $a \in A \setminus R$ and all but at most $\Delta$ many $b \in B$,
	\[
		|\gamma - \langle a,b\rangle| \leq K\Delta^{-1/2};
	\]
	\item for each $b \in B \setminus R$ and all but at most $\Delta$ many $a \in B$,
	\[
		|\gamma - \langle a,b\rangle| \leq K\Delta^{-1/2}.
	\]
\end{enumerate}
\end{lemma}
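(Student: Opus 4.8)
The plan is to run \cref{lem:edges-to-positive-clique} on many one-vertex extensions and then glue the resulting target values together. Let $K_0 = K(L,\Delta_0)$ be the constant produced by \cref{lem:edges-to-positive-clique}, and fix the auxiliary parameter $\delta = \Delta/8$; we take $K$ large enough (depending only on $L$ and $\Delta_0$) that all the crude size estimates below hold whenever $\Delta \ge K$. Without loss of generality assume $|A| \le |B|$. Since the maximum negative degree in $B$ is at most $\Delta_0$, applying \cref{lem:edges-to-positive-clique} to $\{a\} \sqcup B$ for each $a \in A$ yields $\gamma_a \in \RR$ with $|\langle a,b\rangle - \gamma_a| \le K_0 \delta^{-1/2}$ for all but at most $2\delta$ vertices $b \in B$, which we call \emph{bad for $a$}; symmetrically, for each $b \in B$ we obtain $\gamma_b \in \RR$ with at most $2\delta$ vertices $a \in A$ bad for $b$.

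First I would locate the common value $\gamma$ and show that most of the $\gamma_a$ lie near it. Double counting gives $\sum_{b \in B} \#\{a \in A : b \text{ bad for } a\} = \sum_{a \in A} \#\{b \in B : b \text{ bad for } a\} \le 2\delta|A|$, so, using $|A| \le |B|$, some $b^* \in B$ is bad for at most $2\delta|A|/|B| \le 2\delta$ vertices of $A$; this is the one place the balance assumption is used. Applying \cref{lem:edges-to-positive-clique} to $\{b^*\} \sqcup A$ (the maximum negative degree in $A$ is at most $\Delta_0$) produces $\gamma \in \RR$ with $|\langle a,b^*\rangle - \gamma| \le K_0\delta^{-1/2}$ for all but at most $2\delta$ vertices $a \in A$. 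Outside an exceptional set of size at most $4\delta$, a vertex $a \in A$ has $\langle a,b^*\rangle$ within $K_0\delta^{-1/2}$ of both $\gamma$ and $\gamma_a$, hence $|\gamma_a - \gamma| \le 2K_0\delta^{-1/2}$. Setting $A' = \{a \in A : |\gamma_a - \gamma| \le 2K_0\delta^{-1/2}\}$, we get $|A'| \ge |A| - 4\delta \ge \Delta^2 - 4\delta > 4\delta$.

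Next I would propagate from the large cluster $A'$ to all of $B$. By the same double counting, $\sum_{b\in B}\#\{a \in A' : b \text{ bad for } a\} \le 2\delta|A'|$, so for all but at most $4\delta$ vertices $b\in B$ fewer than $|A'|/2$ vertices of $A'$ are bad for $b$; moreover for each $b$ at most $2\delta$ vertices $a \in A'$ have $|\langle a,b\rangle - \gamma_b| > K_0\delta^{-1/2}$. For a $b$ avoiding both defects, since $|A'| > 4\delta$ there is $a \in A'$ with $|\langle a,b\rangle - \gamma_b| \le K_0\delta^{-1/2}$, $|\langle a,b\rangle - \gamma_a| \le K_0\delta^{-1/2}$, and $|\gamma_a - \gamma| \le 2K_0\delta^{-1/2}$, whence $|\gamma_b - \gamma| \le 4K_0\delta^{-1/2}$. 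Thus all but at most $4\delta$ vertices $b \in B$ satisfy $|\gamma_b - \gamma| \le 4K_0\delta^{-1/2}$. Now let $R$ be the union of the (at most $4\delta$) vertices $a$ with $|\gamma_a - \gamma| > 2K_0\delta^{-1/2}$ and the (at most $4\delta$) vertices $b$ with $|\gamma_b - \gamma| > 4K_0\delta^{-1/2}$, so $|R| \le 8\delta = \Delta$. For $a \in A \setminus R$, all but the at most $2\delta \le \Delta$ vertices bad for $a$ satisfy $|\langle a,b\rangle - \gamma| \le 3K_0\delta^{-1/2}$, and symmetrically for $b \in B \setminus R$ with constant $5K_0\delta^{-1/2} = 5\sqrt8\,K_0\Delta^{-1/2}$; taking $K \ge 5\sqrt8\,K_0$, and large enough for the finitely many size estimates above, gives the claim.

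The step I expect to be the main obstacle is producing the single common value $\gamma$: a direct double count over ``good pairs'' $\{(a,b) : \langle a,b\rangle \approx \gamma_a \approx \gamma_b\}$ only controls the exceptional vertices on the \emph{smaller} of $A$ and $B$, and fails badly when the two parts are very unbalanced. The fix above is to first extract a large cluster $A'$ inside the smaller part by testing against one carefully chosen vertex $b^*$ of the larger part, and only then use $A'$ to force the target values on the larger part into line.
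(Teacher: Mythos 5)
Your proof is correct, but the way you glue the per-vertex values $\gamma_a,\gamma_b$ into a single $\gamma$ is genuinely different from the paper's. Both arguments begin identically, applying \cref{lem:edges-to-positive-clique} to $\{a\}\sqcup B$ and $\{b\}\sqcup A$ to get concentration values $\gamma_a,\gamma_b$. The paper then clusters the values $\{\gamma_a\}_{a\in A}$ around a common $\gamma_A$ by a threshold argument: if both $S_+(\theta)$ and $S_-(\theta)$ were large, it finds a common witness $v\in B$ good for representatives of both and invokes \cref{lem:very-strong-positive-clique} a second time to get a contradiction; it does the same for $B$, and finally shows $\gamma_A\approx\gamma_B$ via a chain $\gamma_A\approx\gamma_x\approx\langle x,y\rangle\approx\gamma_y\approx\gamma_B$. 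You instead avoid any further use of the positive-semidefiniteness machinery: after the harmless reduction $|A|\le|B|$ (legitimate, since the hypotheses and the two conclusions are symmetric in $A$ and $B$), a double count over bad incidences produces a pivot $b^*$ that is bad for at most $2\delta$ vertices of $A$, and then $\gamma\coloneqq\gamma_{b^*}$ automatically attracts all but $4\delta$ of the $\gamma_a$, with a second Markov-type count propagating the concentration to all but $4\delta$ of the $\gamma_b$. I checked the quantitative bookkeeping ($|A'|>4\delta$ so a good $a\in A'$ exists for each good $b$, $|R|\le 8\delta=\Delta$, final constant $5\sqrt{8}\,K_0$) and it goes through; the hypothesis $\min\{|A|,|B|\}\ge\Delta^2$ is used only to guarantee $|A'|>4\delta$, which is weaker than what the paper needs for its witness-finding steps. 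What your route buys is a more elementary merging step (pure counting, using \cref{lem:edges-to-positive-clique} as a black box) with cleaner constants; what the paper's route buys is a symmetric treatment of $A$ and $B$ with no pivot selection, and the two-sided threshold picture that is reused almost verbatim in the proof of \cref{lem:edges-to-positive-clique} itself. Your diagnosis of the main obstacle (a naive good-pair double count only controls the smaller side) matches exactly the issue the paper's second application of \cref{lem:very-strong-positive-clique} is designed to handle, and your pivot fix resolves it correctly.
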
	

\begin{proof}
Let $K_1$ be a constant greater than that of \cref{lem:edges-to-positive-clique} and \cref{lem:very-strong-positive-clique}. To guarantee that all estimates below are trivially satisfied, take $K = 2^{200}K_1$. Furthermore, for any $\Delta \geq K$ take $\Delta_1 = 2^{-100}\Delta$. 

According to \cref{lem:edges-to-positive-clique}, for every $a \in A$, there exists a corresponding $\gamma_a \in L$, such that all but at most $\Delta_1$ elements $b \in B$ satisfy $|\langle a, b \rangle - \gamma_a | \leq K_1\Delta_1^{-1/2}$. Define  $\gamma_b$ similarly for each $b \in B$.

We will first now demonstrate the existence of $\gamma_A \in \RR$ such that for all but at most $2\Delta_1$ elements $a \in A$ we have
$|\gamma_a - \gamma_A| \leq 4K_1\Delta_1^{-1/2}$. For each $\theta \in \RR$ set 	
\[
		S_+(\theta) \coloneqq \left\{a \in A : \gamma_a \geq \theta+ 4K_1\Delta_1^{-1/2}\right\}
  \]
  and
  \[
		S_-(\theta) \coloneqq\left\{a \in A : \gamma_a \leq \theta-  4K_1\Delta_1^{-1/2}\right\}.
\]
Let us first show $\min\set{|S_+(\theta)|, |S_-(\theta)|} \leq  \Delta_1$. Suppose the contrary that $|S_+(\theta)| \geq (\Delta_1 + 1)$ and $|S_-(\theta)| \geq  (\Delta_1 +1)$. Let $S_+'(\theta) \subseteq S_+(\theta)$ be arbitrary subset with $(\Delta_1 + 1)$ elements. Likewise, let $S_-'(\theta) \subseteq S_-(\theta)$ be arbitrary subset with $(\Delta_1 + 1)$ elements. Since $|B| > 2\Delta_1(\Delta_1+1)$, we can find an $v \in B$ such that for all $a \in S_+'(\theta) \cup S_-'(\theta)$ 
\[
		|\langle a,v \rangle -\gamma_{a}| \leq K_1\Delta_1^{-1/2}.
\]
By \cref{lem:very-strong-positive-clique} applied to $S_-(\theta)' \cup S_+'(\theta) \cup \{v\}$, we get a contradiction.

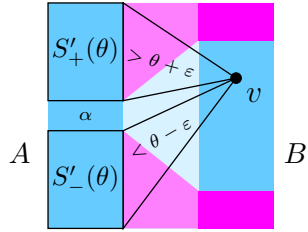
\begin{figure}[h]\label{fig:two-positive-cliques-1}
\definecolor{ffzzcc}{rgb}{1,0,1}
\definecolor{qqzzff}{rgb}{0,0.6,1}
\definecolor{wwccff}{rgb}{0.4,0.8,1}
\begin{tikzpicture}
\fill[line width=0pt,color=wwccff,fill=wwccff] (1.5,1.5) -- (0.5,1.5) -- (0.5,-1.5) -- (1.5,-1.5) -- cycle;
\fill[line width=0pt,color=wwccff,fill=wwccff] (-0.5,1.5) -- (-1.5,1.5) -- (-1.5,-1.5) -- (-0.5,-1.5) -- cycle;

\fill[line width=0pt,color=wwccff,fill=wwccff!25] (0.5,1.5) -- (-0.5,1.5) -- (-0.5,-1.5) -- (0.5,-1.5) -- cycle;


\draw[line width=0.5pt] (-0.5,0.2) -- (-1.5,0.2) -- (-1.5,1.5) -- (-0.5,1.5) -- cycle;
\draw[line width=0.5pt] (-0.5,-1.5) -- (-0.5,-0.2) -- (-1.5,-0.2) -- (-1.5,-1.5) -- cycle;


\fill[line width=0pt,color=ffzzcc,fill=ffzzcc!50] (-0.5,-0.2) -- (0.5,-1) -- (1.5,-1) -- (1.5,-1.5) -- (0.5,-1.5) -- (-0.5,-1.5) -- cycle;
\fill[line width=0pt,color=ffzzcc,fill=ffzzcc!50] (-0.5,0.2) -- (-0.5,1.5) -- (0.5,1.5) -- (1.5,1.5) -- (1.5,1) -- (0.5,1) -- cycle;
\fill[line width=0pt,color=ffzzcc,fill=ffzzcc] (0.5,1.5) -- (1.5,1.5) -- (1.5,1) -- (0.5,1) -- cycle;
\fill[line width=0pt,color=ffzzcc,fill=ffzzcc]  (0.5,-1.5) -- (1.5,-1.5) -- (1.5,-1) -- (0.5,-1) -- cycle;

\draw [fill=black] (1,0.5) circle (2pt)  node[below right] {\large$v$};
\draw [line width = 0.5pt] (1, 0.5) -- (-0.5, 1.5) -- (-0.5, 0.2) -- cycle;
\draw [line width = 0.5pt] (1, 0.5) -- (-0.5, -0.2) -- (-0.5, -1.5) -- cycle;

\node[rotate=-15] at (0, 0.7) {\tiny $> \theta + \varepsilon$};
\node[rotate=30] at (0, -0.3) {\tiny $< \theta - \varepsilon$};
\node at (-1, 0) {\tiny $\alpha$};
\node at (-1, 0.85) {\small $S_+'(\theta)$};
\node at (-1, -0.85) {\small $S_-'(\theta)$};
\node[left] at (-1.6, -0.5) {$A$};
\node[right] at (1.5, -0.5) {$B$};
\end{tikzpicture}
\caption{Illustration of the argument that nearly all $\gamma_a \approx \gamma_A$ in \cref{lem:two-positive-cliques}. We find $v \in B$ whose edges to $S_+'(\theta)$ have large weights and to $S_-'(\theta)$ have small weights. By \cref{lem:very-strong-positive-clique} at least one of $S_+'(\theta)$ and $S_-'(\theta)$ has small size. Thus, we can find $\theta = \gamma_A$ so that both sets have small size.}
\end{figure}
Next, we proceed as in the proof of \cref{lem:edges-to-positive-clique}. Define
\[
    \theta^* = \max\set{\theta  \in \RR: |S_+(\theta)| > \Delta_1} \text{ and } \theta_* = \max\set{\theta  \in \RR: |S_-(\theta)| > \Delta_1}.
\]
Then, $\theta_* > \theta^*$ or else if $\theta \in [\theta_*, \theta^*]$, then $\min\set{|S_-(\theta)|, |S_+(\theta)|} > \Delta_1$, which is a contradiction. Hence, if we select $\gamma_A \in (\theta^*, \theta_*)$ we get $ |S_-(\gamma)| \leq \Delta_1$ and $|S_+(\gamma)| \leq \Delta_1$. In other words, for all but at most $2\Delta_1$ elements $a \in A$, we have $|\gamma_a - \gamma_A| \leq 4K_1\Delta_1^{-1/2}$. Similarly, we can select $\gamma_B \in \RR$ such that for all but $2\Delta_1$ elements $b \in B$, we have $|\gamma_b - \gamma_B| \leq 4K_1\Delta_1^{-1/2}$. 
	
We now want to show that $\gamma_A$ and $\gamma_B$ are close. Since $|A| > (\Delta_1+1) + (2\Delta_1)$ and $|B| > (\Delta_1 + 1)\Delta_1 + (2\Delta_1)$  we can find a subset $A' \subseteq A$ of size $(\Delta_1 + 1)$ and a subset $B'$ of size larger than $\Delta_1(\Delta_1 + 1)+1$ such that for all $a \in A'$ and $b \in B'$
\[
	|\gamma_a - \gamma_A| \leq 4K_1\Delta_1^{-1/2} \quad \text{ and } \quad |\gamma_b - \gamma_B| \leq 4K_1\Delta_1^{-1/2}.
\]
Hence, we can find $y \in B'$ with $|\gamma_y - \ang{a,y}| \leq K_1\Delta_1^{-1/2}$ for all $a \in A'$. Then, we can find $x \in A'$ such that $|\gamma_x - \ang{x, b}| \leq K_1 \Delta_1^{-1/2}$ for all $b \in B'$. Thus,
	\begin{align*}
		|\gamma_A - \gamma_B| &\leq |\gamma_A-\gamma_x|+|\gamma_x-\langle x, y\rangle
		| + |\langle x, y\rangle - \gamma_y| + |\gamma_y - \gamma_B| \\
	          &\leq 4K_1\Delta_1^{-1/2} +  K_1\Delta_1^{-1/2} +  K_1\Delta_1^{-1/2}+4K_1\Delta_1^{-1/2} \leq 10K_1\Delta_1^{-1/2}.
	\end{align*}
Choosing $\gamma = (\gamma_A + \gamma_B)/2$, we see that for all but at most $2\Delta_1$ elements $a \in A$, we have
\[
    |\gamma_a  -  \gamma| \leq |\gamma_a-\gamma_A| + |\gamma_A - \gamma| \leq 4K_1\Delta_1^{-1/2} + 5K_1\Delta_1^{-1/2} = 9 K_1\Delta_1^{-1/2}.
\]
Likewise, for all but at most $2\Delta_1$ elements $b \in B$, we have $|\gamma_b - \gamma| \leq 9K_1\Delta_1^{-1/2}$. 
\end{proof}

\begin{figure}\label{fig:two-positive-cliques-2}
   \definecolor{ffzzcc}{rgb}{1,0,1}
   \definecolor{wwccff}{rgb}{0.4,0.8,1}
\begin{tikzpicture}
\fill[line width=0pt,color=black,fill=wwccff] (1.5,1.5) -- (0.5,1.5) -- (0.5,-1.5) -- (1.5,-1.5) -- cycle;
\fill[line width=0pt,color=wwccff,fill=wwccff] (-0.5,1.5) -- (-1.5,1.5) -- (-1.5,-1.5) -- (-0.5,-1.5) -- cycle;
\fill[line width=0pt,color=wwccff,fill=wwccff!40] (0.5,1.5) -- (-0.5,1.5) -- (-0.5,-1.5) -- (0.5,-1.5) -- cycle;

\fill[line width=0pt,color=black,fill=ffzzcc!30] (-0.5,1.5) -- (-0.5,-1) -- (0.5,1) -- (1.5,1) -- (1.5,1.5) -- cycle;
\fill[line width=0pt,color=ffzzcc,fill=ffzzcc!] (-1.5,1.5) -- (-0.5,1.5) -- (1,0) -- (-0.5,1) -- (-1.5,1) -- cycle;

\fill[line width=0pt,color=ffzzcc,fill=ffzzcc] (0.5,1.5) -- (1.5,1.5) -- (1.5,1) -- (0.5,1) -- cycle;
\fill[line width=0pt,color=ffzzcc,fill=ffzzcc]  (0.5,-1.5) -- (1.5,-1.5) -- (1.5,-1) -- (0.5,-1) -- cycle;
\fill[line width=0pt,color=ffzzcc,fill=ffzzcc] (-0.5,1.5) -- (-1.5,1.5) -- (-1.5,1) -- (-0.5,1) -- cycle;
\fill[line width=0pt,color=ffzzcc,fill=ffzzcc]  (-0.5,-1.5) -- (-1.5,-1.5) -- (-1.5,-1) -- (-0.5,-1) -- cycle;

\draw [line width=1pt] (1,0)-- (-1,0);
\draw [fill=black] (-1,0) circle (2pt) node[below left] {\large$x$};
\draw [fill=black] (1,0) circle (2pt) node[below right] {\large$y$};

\draw [line width = 0.75pt, decorate, decoration={brace, amplitude=5pt}] (-1.55, -1)--(-1.55,1.5);

\draw [line width = 0.75pt, decorate, decoration={brace, amplitude=5pt}] (1.55,1.5)--(1.55, -1);

\node[left] at (-1.65, 0.25) {$A'$};
\node[right] at (1.65, 0.25) {$B'$};

\end{tikzpicture}
\caption{Illustration of the argument that $\gamma_A \approx \gamma_B$ in \cref{lem:two-positive-cliques}. We find $y \in B'$ and then $x \in A'$ such that $\gamma_A \approx \gamma_y \approx \ang{x,y} \approx \gamma_y \approx \gamma_B$.}
\end{figure}
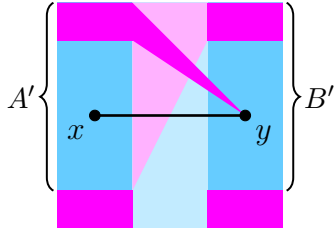

\begin{proof}[Proof of \cref{thm:structure} (Global structure theorem)]
    Fix $P = P(L)$ to be a large constant. Let $C$ be an $L$-code in $\RR^d$. By  \cref{lem:negative-clique}, the Gram graph $G_C$ cannot have a negative clique of size $\Delta_1 \coloneqq \lceil\beta^{-2}\rceil + 2$. Let us run the following algorithm.
\begin{itemize}
    \item Start with $C_0 = C$ and $k=0$;

   \item While $|C_k| \geq \operatorname{RamseyNumber}(P, \Delta_1)$,
   select $A_{k+1} \subseteq C_k$ to be a positive clique of size $P$. Apply \cref{lem:positive-clique} to partition $C_k \setminus A_{k+1} = B_{k+1} \sqcup C_{k+1}$ such that:
        \begin{itemize}
            \item The maximum negative degree from $B_{k+1}$ to $A_{k+1}$ is at most $\Delta_2$, and
            \item The maximum positive degree from $C_{k+1}$ to $A_{k+1}$ is at most $\Delta_2$.
        \end{itemize}
    Increase $k$ by $1$ and iterate.
\end{itemize}

\begin{figure}[h]\label{fig:structure-proof}
    \centering

\definecolor{wwccff}{rgb}{0.4,0.8,1}
\begin{tikzpicture}

\fill[line width=0pt,color=wwccff,fill=wwccff] (-2.5, 1.5) rectangle (-1.5, -1.5);
\node[below right] at (-1.6, -0.2) {$V_1'$};
\node at (-2, 1) {$A_1$};
\node at (-2, -0.75) {$B_1$};
\node at (-2, 0.3) { $+$};
\draw[black] (-2.4, 1.4) rectangle (-1.6, 0.6);
\draw[black] (-2.4, 0) rectangle (-1.6, -1.4);

\fill[line width=0pt,color=wwccff,fill=wwccff] (0, 1.5) rectangle (1, -1.5);
\node[below right] at (0.9, -0.2) {$V_2'$};
\node at (0.5, 1) {$A_2$};
\node at (0.5, -0.75) {$B_2$};
\node at (0.5, 0.3) { $+$};
\draw[black] (0.1, 1.4) rectangle (0.9, 0.6);
\draw[black] (0.1, 0) rectangle (0.9, -1.4);

\fill[line width=0pt,color=wwccff,fill=wwccff] (4.5,1.5) -- (3.5,1.5) -- (3.5,-1.5) -- (4.5,-1.5) -- cycle;
\node[below right] at (4.4, -0.2) {$V_q'$};
\node at (4, 1) {$A_q$};
\node at (4, -0.75) {$B_q$};
\node at (4, 0.3) { $+$};
\draw[black] (3.6, 1.4) rectangle (4.4, 0.6);
\draw[black] (3.6, 0) rectangle (4.4, -1.4);

\draw[black] (-3.2, 1.8) rectangle (6.55, -1.8);
\node[above] at (-2.85, -1.8) {$C_0$};
\draw[black] (-0.7, 1.7) rectangle (6.45, -1.7);
\node[above] at (-0.35, -1.7) {$C_1$};
\draw[black] (1.75, 1.6) rectangle (6.35, -1.6);
\node[above] at (2.1, -1.6) {$C_2$};
\draw[black] (5.25, 1.5) rectangle (6.25, -1.5);
\node[above] at (5.6, -1.5) {$C_q$};

\fill[line width=0pt,color=wwccff,fill=wwccff!40] (-1.5, 1.5) -- (-1.5, 0.5) -- (-0.7, -1.7) -- (-0.7, 1.7) -- cycle;
\node at (-1.1, 1) {$-$};
\fill[line width=0pt,color=wwccff,fill=wwccff!40] (1, 1.5) -- (1, 0.5) -- (1.75, -1.6) -- (1.75, 1.6) -- cycle;
\node at (1.4, 1) { $ - $};
\fill[line width=0pt,color=wwccff,fill=wwccff!40] (4.5, 1.5) -- (4.5, 0.5) -- (5.25, -1.5) -- (5.25, 1.5) -- cycle;
\node at (4.9, 1) { $-$};
\node at (2.6, 0) {$\cdots$};

\end{tikzpicture}
    \caption{The output of the algorithm in the proof of \cref{thm:structure}. Each $A_i$ is a positive clique of size $P$. $B_i$ is the set of vertices in $C_{i-1}$ such that nearly all edges between $B_i$ and $A_i$ are positive. $C_{i}$ is the set of remaining vertices in $C_{i-1}$.}
\end{figure}
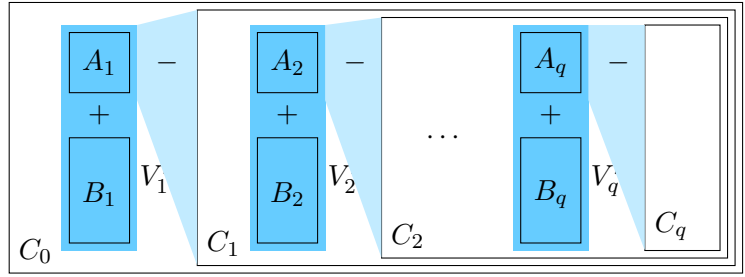
    Suppose the algorithm terminates with $k = q$. Each $A_i$ is a positive clique of size $P$ in $G_C$, and for all $i < j$ the maximum positive degree from $A_j$ to $A_i$ is at most $\Delta_2$. By \cref{lem:many-cliques}, $q \leq p$ as long as $P$ is large enough.

We will next prove that in each $V'_i \coloneqq A_i \cup B_i$, the maximum negative degree is $O_L(1)$. For $v \in V'_i$, let $S_v$ be set of negative neighbors of $v$ in $V'_i$. By \cref{lem:strong-positive-clique} there either $|A_i \setminus S_v| = O_L(1)$ or  $|S_v| = O_L(1)$.  By construction $|S_v \cap A_i| \leq \Delta_2$, so $|A_i \setminus S_v| \geq P - \Delta_2$ forces $|S_v| = O_L(1)$ as long as $P$ is large enough. This shows that the maximum negative degree in $V_i'$ is at $O_L(1)$.

By construction, for all $1 \leq i < j \leq q$, each $v \in V'_j$ has at least $P - \Delta_2$ negative neighbours in $V_i'$. By \cref{lem:positive-clique}, as long as $P$ is large enough, the maximum positive degree from $V_j'$ to $V_i'$ is $O_L(1)$.

 By \cref{lem:two-positive-cliques}, as long as $P$ is large enough, the following holds whenever $\Delta' \geq P$.  For any pair of distinct parts $\{V_i', V_j'\}$ with $|V_i'| \geq (\Delta')^2$ and $|V_j'| \geq (\Delta')^2$, we can find $ \gamma_{ij}' \in \RR$ such that for each $v \in V_i' \setminus R$, all but at most $\Delta'$ elements $u \in V_j'$ satisfy
 \[
 	|\langle u,v \rangle - \gamma_{ij}'| \leq P(\Delta')^{-1/2}.
 \]
Where $|R'| = O_L(\Delta')$. We will now remove small $V_i'$, $R'$, and $C_q$ from the graph. More precisely, define $R$ and $V_i$ by 
\[
    R = \paren{\bigcup_{|V_i'| < (\Delta')^2} V_i'} \cup R'  \cup C_q  \; \text{ and }  \; V_i = V_i' \setminus R.
\]
Note that $|R| = O_L(\Delta')^2$ and each $V_i$ is either empty or $|V_i| \geq \Omega_L(\Delta')^2$. To replace $\gamma_{ij}'$ by elements of $L \setminus \{\alpha\}$, for each pair non-empty pair $\{V_i, V_j\}$ fix some $u \in V_i$ and $v \in V_j$  so that $\langle u,v \rangle < 0$. Then,
\[
	|\langle u,v \rangle - \gamma_{ij}'| \leq P(\Delta')^{-1/2}.
\]
Set $\gamma_{ij} = \langle u,v \rangle \in L \setminus \{\alpha\}$. If any either $V_i$ or $V_j$ is empty, set $\gamma_{ij} \in L \setminus \{\alpha\}$ arbitrarily.

We have now accomplished the following: after removing at most $O_L(\Delta')^2$ elements from $C$, the rest can be partitioned as $ V_1 \sqcup \cdots \sqcup V_q$ with $q \leq p$ so that
\begin{itemize}
	\item[-] For $1 \leq i \leq q$ the maximum negative degree in $V_i$ is at $O_L(1)$.
	\item[-] For $i,j \in [q]$ and $i\ne j$, we can find $\gamma_{ij} \in L \setminus \{\alpha\}$ such that for each $u \in V_i$, for all but at most $\Delta'$ vectors $v \in V_j$ we have that $|\langle u, v \rangle - \gamma_{ij}| = O_L(\Delta')^{-1/2}$.
\end{itemize}
The conclusion follows immediately.
\end{proof}

The following corollary of \cref{thm:structure} will be used towards proving \cref{thm:main}.
A similar result appeared in the argument by Balla, Dr\"{a}xler, Keevash, Sudakov \cite{BDKS18}.

\begin{corollary}\label{cor:global} 
Fix $\alpha, \beta \in (0,1) $ and $L \subseteq [-1,-\beta]\cup\{\alpha\}$ and $p = \lfloor \alpha/ \beta \rfloor + 1$. There exists $\Delta = \Delta(\alpha,\beta)$  such that from any $L$-code, we can remove at most $\Delta$ vectors so that the negative graph of the remaining vertices is a $\Delta$-modification of complete $p$-partite graph.
\end{corollary}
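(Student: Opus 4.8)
The plan is to derive this directly from the global structure theorem, \cref{thm:structure}. Apply that theorem with a parameter $\Delta_1 \ge \Delta_0$ chosen large enough, depending only on $\alpha$ and $\beta$, that $K\Delta_1^{-1/2} < \beta$, where $K$ is the constant furnished by \cref{thm:structure}. This lets us remove at most $\Delta_1^2$ vectors from the given $L$-code $C$ so that the Gram graph $G$ of the remaining vectors is a $\Delta_1$-modification of an edge-weighted graph $H$ with a $p$-block structure: the vertex set partitions as $V_1 \sqcup \cdots \sqcup V_p$ (some parts possibly empty), all edges inside each $V_i$ have weight $\alpha$, and all edges between distinct $V_i$ and $V_j$ have weight within $K\Delta_1^{-1/2}$ of some $\gamma_{ij} \in L \setminus \{\alpha\} \subseteq [-1,-\beta]$.

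The key observation is that, with this choice of $\Delta_1$, the negative graph $H^-$ of $H$ is \emph{exactly} the complete $p$-partite graph with parts $V_1, \dots, V_p$: an edge inside a part has weight $\alpha > 0$ and so is absent from $H^-$, while an edge between two distinct parts has weight at most $-\beta + K\Delta_1^{-1/2} < 0$ and so is present in $H^-$. (When some $V_i$ are empty, $H^-$ is a complete $q$-partite graph with $q < p$, which is still complete $p$-partite after adjoining empty parts.) Next I would transfer this to the negative graph of $G$ itself. By the definition of a $\Delta_1$-modification, $G$ is obtained from $H$ by altering edges and weights only on a subgraph $F$ of maximum degree at most $\Delta_1$; for every pair $uv \notin F$ the inner product $\langle u, v\rangle$ equals the weight of $uv$ in $H$, so $uv$ lies in $G^-$ if and only if it lies in $H^-$. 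Hence the symmetric difference between the edge sets of $G^-$ and $H^-$ is contained in $E(F)$, a graph of maximum degree at most $\Delta_1$. Therefore $G^-$ is a $\Delta_1$-modification of the complete $p$-partite graph $H^-$. Setting $\Delta := \Delta_1^2 \ge \Delta_1$ finishes the proof: we have removed at most $\Delta$ vectors and the negative graph of the remainder is a $\Delta$-modification of a complete $p$-partite graph.

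The argument is essentially bookkeeping once \cref{thm:structure} is available, and it mirrors the corresponding step in the work of Balla, Dr\"axler, Keevash, and Sudakov~\cite{BDKS18}. The one point requiring care is the quantitative choice of $\Delta_1$ large enough that the approximate between-part weights stay strictly negative, so that passing from the weighted Gram graph to its negative graph does not create any discrepancy with the $p$-partite template beyond the bounded-degree modification subgraph already supplied by the structure theorem.
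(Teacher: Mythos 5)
Your derivation is correct and is exactly how the paper intends \cref{cor:global} to follow: the paper gives no separate proof, treating it as an immediate consequence of \cref{thm:structure} by taking the modification parameter large enough that the approximate between-part weights $\gamma_{ij} \pm K\Delta^{-1/2}$ remain negative while within-part weights are $\alpha > 0$, so the template's negative graph is complete $p$-partite and the Gram graph's negative graph differs from it only on the bounded-degree modification subgraph. Your bookkeeping (choosing $K\Delta_1^{-1/2} < \beta$, handling empty parts, and setting $\Delta = \Delta_1^2$) is sound.
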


\section{Rank Bound Within Each Part}\label{sec:rank-bound}

The global structure theorem partitions the $L$-code (minus $O_L(1)$ vectors) into at most $\floor{\alpha/\beta} + 1$ parts $V_1, \dots, V_q$.
Let us now examine the structure inside each $V_i$. 
We already know from the global structure theorem that the maximum negative degree inside each $V_i$ is $O_L(1)$. 
Our goal in this section is to show that, in order for the uniform bound to be tight, the negative graph in each $V_i$ must essentially be a perfect matching. 
For the following statements, one should think about vectors inside a single $V_i$.

Now we define a property enjoyed by negative components inside $V_i$ for an optimal modular code.

\begin{definition}\label{def:a-singular}
Let $ \alpha, \beta \in (0,1)$ and $L \subseteq [-1,-\beta] \cup \{\alpha\}$ with $\alpha \in L$. 
We say that a spherical $L$-code $C$ is \textit{$\alpha$-singular} if $M_C- \alpha J$ is a singular positive semidefinite matrix.
\end{definition}

\begin{proposition}\label{prop:a-singular}
Let $\alpha, \beta \in (0,1)$ and $L = [-1,-\beta] \cup \{\alpha\}$ with $\alpha \in L$.
Let $\lambda = (1-\alpha)/(\alpha + \beta)$. 
There exists an $\alpha$-singular $L$-code if and only if $\lambda \ge 1$. Furthermore, when $\lambda \ge 1$, there exists a unique $\alpha$-singular $L$-code of size two, namely the code with Gram matrix
\[
P = \begin{pmatrix}
            1 & 2\alpha - 1 \\
            2\alpha - 1 & 1
        \end{pmatrix}.
\]
\end{proposition}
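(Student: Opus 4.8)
The plan is to treat the two directions of the equivalence and the uniqueness claim in turn; essentially all of the content lies in the implication that the \emph{existence} of an $\alpha$-singular code forces $\lambda \ge 1$.

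For the ``if'' direction I would simply check that $P$ works. Note that $\lambda \ge 1$ is equivalent to $1 - \alpha \ge \alpha + \beta$, i.e.\ to $2\alpha - 1 \le -\beta$; combined with $2\alpha - 1 > -1$ (since $\alpha > 0$), this shows $2\alpha - 1 \in [-1,-\beta] \subseteq L$. Hence $P$ is a symmetric matrix with unit diagonal and off-diagonal entry in $L$, and it is positive semidefinite (its eigenvalues are $2\alpha$ and $2-2\alpha$), so by \cref{lem:gram} it is the Gram matrix of a spherical $L$-code of size two. Since $P - \alpha J = (1-\alpha)\paren{\begin{smallmatrix}1 & -1 \\ -1 & 1\end{smallmatrix}}$ is positive semidefinite of rank one, this code is $\alpha$-singular.

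For the ``only if'' direction, let $C$ be an $\alpha$-singular $L$-code of size $n$ and set $N = M_C - \alpha J$. Its diagonal entries are $1-\alpha$, and each off-diagonal entry is either $0$ (where the corresponding entry of $M_C$ equals $\alpha$) or at most $-(\alpha+\beta)$ (where that entry lies in $[-1,-\beta]$). I would write $N = (1-\alpha)I - F$ with $F \ge 0$ entrywise, zero diagonal, and every nonzero entry of $F$ at least $\alpha + \beta$ (so $F$ is supported exactly on the negative graph of $C$). Positive semidefiniteness of $N$ says the largest eigenvalue of $F$ is at most $1 - \alpha$, while singularity of $N$ yields $x \ne 0$ with $Fx = (1-\alpha)x$; hence $1-\alpha$ is the spectral radius of $F$. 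The key move is the Perron--Frobenius manoeuvre: from $F \ge 0$ one gets $F|x| \ge |Fx| = (1-\alpha)|x|$ entrywise, and the Rayleigh quotient bound gives $\ang{|x|, F|x|} \le (1-\alpha)\norm{x}_2^2$, so these combine to force $F|x| = (1-\alpha)|x|$ on the support of $x$. Choosing an index $i$ in that support minimising $|x_i| =: t > 0$, the identity $(1-\alpha)t = (F|x|)_i = \sum_j F_{ij}|x_j|$ has positive right-hand side, so there is a $j$ in the support with $F_{ij} \ge \alpha + \beta$, and then $|x_j| \ge t$ gives $(1-\alpha)t \ge (\alpha+\beta)t$, i.e.\ $\lambda \ge 1$.

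Finally, for uniqueness of the size-two $\alpha$-singular code when $\lambda \ge 1$: if $C = \{u,v\}$ with $\ang{u,v} = \gamma \in L$ is $\alpha$-singular, then $\gamma \ne \alpha$ (otherwise $M_C - \alpha J = (1-\alpha)I_2$ is invertible), so $\gamma \in [-1,-\beta]$ and $\gamma - \alpha < 0$; the vanishing determinant $(1-\alpha)^2 - (\gamma-\alpha)^2 = 0$ then forces $\alpha - \gamma = 1 - \alpha$, i.e.\ $\gamma = 2\alpha - 1$ and $M_C = P$. The main obstacle throughout is the ``only if'' direction — more precisely, upgrading a bare kernel vector of $N$ to a nonnegative eigenvector of $F$ via absolute values — while the rest is routine. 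One small point to verify is that the minimising index $i$ actually has a neighbour in the negative graph, but this is automatic: otherwise $(F|x|)_i = 0$, contradicting $(1-\alpha)t > 0$.
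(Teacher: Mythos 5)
Your proof is correct, and the ``if'' and uniqueness parts coincide with the paper's argument (check that $P$ is positive semidefinite with $2\alpha-1\in L$ and that $P-\alpha J$ is singular; for uniqueness, singularity of the $2\times 2$ matrix forces the off-diagonal entry to be $2\alpha-1$ or $1$, and $1$ is excluded). Where you genuinely diverge is the ``only if'' direction. The paper argues by contraposition with a purely local $2\times 2$ test: if $\lambda<1$, singularity of $M-\alpha J$ forces some entry $M_{ij}\le-\beta$, and then $(e_i+e_j)^\intercal(M-\alpha J)(e_i+e_j)=2-4\alpha+2M_{ij}\le 2-4\alpha-2\beta<0$, contradicting positive semidefiniteness. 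You instead run a global spectral argument: writing $M-\alpha J=(1-\alpha)I-F$ with $F\ge 0$ entrywise and every nonzero entry at least $\alpha+\beta$, you upgrade a kernel vector to a nonnegative eigenvector via $F\abs{x}\ge\abs{Fx}$ together with the Rayleigh bound, and extract $\lambda\ge 1$ at a support index of minimal modulus. This is valid (and your closing remark correctly disposes of the degenerate case of an isolated minimizing index; also note the ``spectral radius'' claim is never actually needed, since the Rayleigh bound $\langle\abs{x},F\abs{x}\rangle\le(1-\alpha)\norm{x}^2$ suffices), but it is heavier than necessary: it essentially re-derives the identification, used elsewhere in the paper (e.g.\ in the proof of \cref{prop:rank-bound}), of $\alpha$-singularity with $\lambda$ being the top eigenvalue of the weighted negative-graph adjacency matrix, whereas the paper's two-line principal-submatrix test gets the same conclusion with no eigenvector analysis. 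Your route does have the mild virtue of exposing that connection explicitly; the paper's buys brevity and elementarity.
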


\begin{proof}
    First, suppose $\lambda \geq 1$, or equivalently $2\alpha - 1\leq -\beta$. Then, the above $P$ is positive semidefinite and thus is the Gram matrix of an $L$-code. Furthermore, $P - \alpha J$ is singular and positive semidefinite.
    It is also not hard to see every $\alpha$-singular $L$-code of size two has to have $P$ as its Gram matrix, since the singularity condition forces the off-diagonal entry of $P$ to be $2\alpha - 1$ or $1$, but $1$ is not allowed off-diagonal.

    Next, suppose $\lambda < 1$, or equivalently $2\alpha - 1 > -\beta$. For contradiction, suppose there exists an $\alpha$-singular $L$-code with Gram matrix $M$. Since $M - \alpha J$ is singular, $M - \alpha J \neq (1-\alpha) I$. Thus, $M_{ij} < 0$ for some $(i,j)$. 
    So
    \[
        (e_i + e_j)^\intercal (M - \alpha J) (e_i + e_j) = 2 + 2M_{ij} - 4\alpha \leq 2 - 2\beta - 4\alpha < 0,
    \]
    which contradicts $M - \alpha J$ being positive semidefinite.
\end{proof}

\begin{proposition}[Rank bound]\label{prop:rank-bound}
Let $\alpha, \beta \in (0,1)$ and $L \subseteq [-1,-\beta] \cup \{\alpha\}$ with $\alpha \in L$.
Suppose $V$ is an $L$-code in $\RR^d$ with maximum negative degree at most $\Delta$.
Also suppose that among the connected components of its negative graph, exactly $r$ are $\alpha$-singular. 
Then
\[
\abs{V} \leq \begin{cases}
					d + O_{L,\Delta}\paren{\frac{d}{\log \log d}} & \text{ if } r = 0, \\
					d + r - 1 & \text{ if } r \neq 0.
				\end{cases}	
\]
\end{proposition}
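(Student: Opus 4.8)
The plan is to bound $\abs V$ by comparing $\operatorname{rank} M_V$ with $\operatorname{rank}(M_V-\alpha J)$. Since the off-diagonal entries of $M_V-\alpha J$ are supported exactly on the negative edges, after reordering $V$ along the connected components $W_1,\dots,W_k$ of the negative graph $G_V^-$ the matrix $M_V-\alpha J$ is block diagonal, so $M_V = \alpha\mathbf 1\mathbf 1^\intercal + \bigoplus_l N_l$ with $N_l := M_{W_l}-\alpha J$. Each block is $N_l = (1-\alpha)I - A_l'$, where $A_l'$ is the weighted adjacency matrix of the negative graph on $W_l$, with edge weights $\alpha - \ang{u,v} \in [\alpha+\beta,1+\alpha]$; thus $A_l'$ is entrywise nonnegative, irreducible (since $W_l$ is connected in $G_V^-$), and has maximum degree at most $\Delta$. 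The key preliminary observation is that for any $x$ supported on a single $W_l$ with $\mathbf 1^\intercal x = 0$ we have $x^\intercal N_l x = x^\intercal M_V x \ge 0$; hence each $N_l$ is positive semidefinite on $\mathbf 1^\perp$, so it has at most one negative eigenvalue, any eigenvector of $N_l$ with negative eigenvalue has nonzero coordinate sum, and $\lambda_2(A_l') \le 1-\alpha$.

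Next I would prove a dichotomy among the blocks. First, at most one $N_l$ can fail to be positive semidefinite: if $N_i$ and $N_j$ ($i\ne j$) both had a negative eigenvalue, pick corresponding eigenvectors $y_i,y_j$, normalized (using the previous observation) to coordinate sum $1$; then $x = y_i - y_j$, extended by zero, satisfies $\mathbf 1^\intercal x = 0$ and $0 \le x^\intercal M_V x = y_i^\intercal N_i y_i + y_j^\intercal N_j y_j < 0$, a contradiction. Second, if some $W_i$ is $\alpha$-singular, so $N_i \succeq 0$ is singular, then Perron--Frobenius applied to the irreducible nonnegative matrix $A_i' = (1-\alpha)I - N_i$ shows $\ker N_i$ is one-dimensional and spanned by a strictly positive vector $w_i$; since $w_i^\intercal N_i w_i = 0$ and $\mathbf 1^\intercal w_i \ne 0$, running the argument above with $w_i$ in place of $y_i$ shows that, once an $\alpha$-singular block is present, no block can have a negative eigenvalue. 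In particular each $\alpha$-singular block has nullity exactly $1$.

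If $r \ge 1$, then by the dichotomy every $N_l$ is positive semidefinite; the $r$ $\alpha$-singular blocks have nullity $1$, and the rest are nonsingular (a singular positive semidefinite block would be $\alpha$-singular). Hence $N := \bigoplus_l N_l \succeq 0$ with $\operatorname{rank} N = \abs V - r$ and $\ker N = \operatorname{span}\set{w_1,\dots,w_r}$; since the $w_i$ are strictly positive, $\mathbf 1 \notin (\ker N)^\perp = \operatorname{col} N$, and the standard fact that adding $uu^\intercal$ to a positive semidefinite matrix $B$ raises the rank by exactly $1$ when $u \notin \operatorname{col} B$ gives $\operatorname{rank} M_V = \operatorname{rank} N + 1 = \abs V - r + 1 \le d$, i.e.\ $\abs V \le d + r - 1$. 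If $r = 0$, then by the dichotomy at most one block $W_*$ has $N_*$ not positive semidefinite and all other blocks are nonsingular, so $\operatorname{nullity} N = \operatorname{nullity} N_*$ and $\abs V = \operatorname{rank} N + \operatorname{nullity} N \le (\operatorname{rank} M_V + 1) + \operatorname{nullity} N_* \le d + 1 + \operatorname{nullity} N_*$. If $\abs{W_*} \le 2$ then $N_*$ is nonsingular (a singular $N_*$ of size $2$ would be $\alpha$-singular), so $\operatorname{nullity} N_* = 0$; if $\abs{W_*} \ge 3$ then $\lambda_1(A_*') \ne 1-\alpha$ (else $W_*$ would be $\alpha$-singular) while $\lambda_2(A_*') \le 1-\alpha$, so $\operatorname{nullity} N_*$ is at most the multiplicity of $\lambda_2(A_*')$, which after rescaling the weights of $A_*'$ into an interval $[1,1+\nu]$ with $\nu = \nu(\alpha,\beta)$ is $O_{L,\Delta}(\abs{W_*}/\log\log\abs{W_*})$ by \cref{thm:eigen-mult}. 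Since $\abs{W_*} \le \abs V$, feeding this back (the inequality first yields $\abs V = O_{L,\Delta}(d)$, hence $\log\log\abs V = \Theta(\log\log d)$) gives $\abs V \le d + O_{L,\Delta}(d/\log\log d)$.

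The conceptual core is the pair of positivity facts derived from $M_V \succeq 0$ — that every block is positive semidefinite on $\mathbf 1^\perp$, and that at most one block (none, once an $\alpha$-singular block appears) is indefinite — combined with Perron--Frobenius fixing the nullity of each $\alpha$-singular block to exactly $1$ and the rank-one update identity supplying the final ``$-1$''; the rest is bookkeeping. The one genuinely non-elementary ingredient is \cref{thm:eigen-mult} in the $r=0$ case, which is the sole source of the $O(d/\log\log d)$ error term and, as with equiangular lines, the point where a quantitatively stronger eigenvalue-multiplicity bound would directly sharpen the result; I expect recognizing that $r=0$ forces at most one indefinite block — so that a single application of the multiplicity theorem suffices rather than an uncontrolled sum over many small components — to be the step most easily overlooked.
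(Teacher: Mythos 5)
Your proof is correct and follows essentially the same route as the paper: both decompose along the connected components of the negative graph, use positive semidefiniteness of $M_V$ tested against vectors orthogonal to $\mathbf 1$ to show at most one component can be ``supercritical'' (and none once an $\alpha$-singular component exists), invoke Perron--Frobenius to get nullity exactly one per $\alpha$-singular component, and apply \cref{thm:eigen-mult} to the single exceptional component in the $r=0$ case. The only differences are presentational: you phrase the case analysis block-by-block with a rank-one-update lemma for $M_V = N + \alpha\mathbf 1\mathbf 1^\intercal$, whereas the paper cases on where $\lambda=(1-\alpha)/(\alpha+\beta)$ sits in the spectrum of the full weighted adjacency matrix and computes $\ker(\lambda I - A_H+\mu J)=\ker(\lambda I - A_H)\cap\ker J$, which is the same computation.
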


\begin{proof}
Let $M_V$ be the Gram matrix of $V$. It can be written as
			\[
				M_V = (1-\alpha)I + \alpha J - (\alpha + \beta) A_H,
			\]
		where $A_H$ is the weighted adjacency matrix of a weighted graph $H$ on vertex set $V$ such that edge $uv$ has weight
			\[
				\frac{\alpha -\langle u, v \rangle}{\alpha + \beta} \in [1, 1 + \nu] \quad \text{ where } \, \nu \coloneqq \frac{1 - \beta}{\alpha + \beta}.
			\]
		Let $\mu = \alpha / (\alpha + \beta)$ and $\lambda = (1-\alpha)/(\alpha + \beta)$. Then
			\[
				(\alpha + \beta)^{-1}M_V = \lambda I - A_H + \mu J
			\]
		is positive semidefinite and has rank at most $d$. 
  Let $H[S]$ be the subgraph of $H$ induced by $S\subseteq V$. Then, $S$ is $\alpha$-singular if and only if $\lambda =  \lambda_1(A_{H[S]})$, since
		\[
			(\alpha + \beta)^{-1}(M_S - \alpha J) = \lambda I - A_{H[S]}.
		\]
		We will now consider few cases depending on how $\lambda$ relates to $\lambda_1(A_H)$.

\textit{Case 1.} Suppose that  $\lambda$ is not an eigenvalue of $A_H$. Then  $\lambda I - A_H$ has full rank. Hence,
				\[
					d \geq \mathrm{rk}(\lambda I - A_H + \mu J) \geq \mathrm{rk}(\lambda I - A_H) - 1 = |V| - 1.
				\]
			In this case $|V| \leq d + 1$, so we are done as no component is $\alpha$-singular.
						
\textit{Case 2.} Suppose $\lambda = \lambda_1(A_H)$. Let $S_1, S_2, ..., S_r$ be the vertex sets of the connected components of $H$ that form $\alpha$-singular  $L$-codes. By the Perron-Frobernius theorem, for all $j \in [r]$ we have $\lambda_2(A_{H[S_j]}) < \lambda_1(A_{H[S_j]}) = \lambda$. Furthermore, for any other connected component $S \subseteq V$, we have $\lambda_1(A_{H[S]}) < \lambda$. Therefore, 
				\[
					\dim \ker(\lambda I - A_H) = r.
				\]
			Since $\lambda I - A_H$ and $J$ are positive semidefinite, 
				\[
					\ker(\lambda I - A_H + \mu J) = \ker(\lambda I - A_H) \cap \ker J \le \ker(\lambda I - A_H) - 1 = r-1
				\]
    where the final inequality is due to the existence of a coordinate-wise nonnegative eigenvector of $A_H$ (again by the Perron-Frobernius theorem).
    Thus, by the  rank-nullity theorem,
    \[
    \abs{V} = \operatorname{rk}(\lambda I - A_H + \mu J) + \dim\ker (\lambda I - A_H + \mu J) \le d+ r-1.
    \]

\textit{Case 3.} Suppose  now that $\lambda = \lambda_2(A_H)$. Let $S_1, \ldots, S_k$ be the vertex sets of connected components of $G$ such that $\lambda_1(A_{H[S_1]}) \geq \ldots \geq \lambda_1(A_{H[S_k]})$. We first prove $\lambda_1(A_{H[S_j]}) < \lambda$ for all $1 < j \leq k$. 
			
			Suppose $\bm u$ and $\bm v$ are top eigenvectors of $A_{H[S_1]}$ and $A_{H[S_j]}$ respectively. By the Perron-Frobernius theorem, $\bm u$ and $\bm v$ have non-negative entries supported on corresponding components. In particular, $\bm u$ and $\bm v$ have disjoint supports.

			Since $\bm 1^\intercal \bm u > 0$ and $\bm 1^\intercal \bm v > 0$, there is $t \neq 0$ such that $\bm w =\bm  u + t\bm v$ and $\bm 1^\intercal  \bm w = 0$. Therefore, as $\lambda I - A_H + \mu J \succeq 0$, we have that
				\[
					0 \leq \bm w^\intercal(\lambda I - A_H + \mu J) \bm w = [\lambda - \lambda_1(S_1)]\bm u^\intercal \bm u + t^2[\lambda - \lambda_1(S_i)] \bm v^\intercal \bm v.
				\]
			Since $\lambda < \lambda_1(A_{H[S_1]})$ we conclude that $\lambda > \lambda_1(A_{H[S_i]})$.
			
			Now, by assumption, $G$ has maximum degree bounded by $\Delta$. In particular, $S_1$ is connected weighted graph with maximum degree bounded above by $\Delta$ with weights contained in $[1, 1+\nu]$. By the eigenvalue multiplicity bound \cref{thm:eigen-mult},
				\[
					\mathrm{mult}(\lambda, G) = \mathrm{mult}(\lambda, S_1) = O_{L,\Delta}\left(\frac{|S_1|}{\mathrm{loglog} |S_1|}\right) = O_{L,\Delta}\left(\frac{|V|}{\mathrm{loglog} |V|}\right).
				\]
So
				\[
					d \geq \mathrm{rk}(\lambda I - A_H + \mu J) \geq \mathrm{rk}(\lambda I - A_H) - 1 \geq |V| - O_{L,\Delta}\left(\frac{|V|}{\mathrm{loglog} |V|}\right).
				\]
			By rewriting, we obtain $|V| \leq d + O_{L,\Delta}\left(d/ \log \log d\right)$. Since  $\lambda < \lambda_1(A_{H[S_j]})$ for all $j \in [k]$, no component in $V$ is $\alpha$-singular.

\textit{Case 4.} Suppose $\lambda < \lambda_2(A_{H})$. Let $\bm u$ and $\bm v$ be two independent eigenvalues of $A_H$ each corresponding to an eigenvalue greater than $\lambda$.
Let $\bm w$ be a nonzero vector in the span of $\bm u$ and $\bm v$ and orthogonal to the all-1s vector.
Since $\lambda I - A_H + \mu J$ is positive semidefinite,
				\begin{align*}
					0\leq  \bm w^\intercal (\lambda I - A_H + \mu J)\bm w 
     = \bm w^\intercal (\lambda I - A_H)\bm w  
     \leq [\lambda - \lambda_2(G)]\bm w^\intercal \bm w 
     < 0.
				\end{align*}
			This is a contradiction.
\end{proof}

Roughly speaking, the rank bound shows that when the size of $V$ is large, there must be many $\alpha$-singular components present. The following corollary provides an upper bound on the size of certain $L$-codes in terms of the number of $\alpha$-singular components of size two that they contain.

\begin{corollary}\label{cor:rank-appl}
Let $\alpha, \beta \in (0,1)$ and $L \subseteq [-1,-\beta] \cup \{\alpha\}$. For all positive integers $p$ and $\Delta$ there exists $d_0 = d_0(L,p,\Delta)$ such that for all $d \geq d_0$ the following holds.

Suppose $C = V_1 \sqcup \cdots \sqcup V_p$ is an $L$-code in $\RR^d$ such that the maximum negative degree in $V_i$ is at most $\Delta$ for each $i \in [p]$. For each $i \in [p]$, let $r_i$ be the number of connected components of $G_{V_i}^-$ that form $\alpha$-singular $L$-codes of size two. Then
    \[
      |C| \leq \frac{3p(d-1)}{2} + \frac{(r_1 + \cdots + r_p)}{2}.
    \]
\end{corollary}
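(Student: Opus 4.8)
The plan is to apply the rank bound \cref{prop:rank-bound} inside each part $V_i$ separately and then sum the resulting estimates. Since an $L$-code with $L\subseteq[-1,-\beta]\cup\{\alpha\}$ is in particular a $([-1,-\beta]\cup\{\alpha\})$-code, I may assume $L=[-1,-\beta]\cup\{\alpha\}$, so that $\alpha\in L$ and \cref{prop:rank-bound} applies. Fix $i\in[p]$ and let $s_i$ be the \emph{total} number of connected components of $G_{V_i}^-$ whose vertex set forms an $\alpha$-singular $L$-code, so that $0\le r_i\le s_i$. A single vector $v$ has Gram matrix $(1)$, and $M_{\{v\}}-\alpha J=(1-\alpha)\ne 0$, so a singleton is never $\alpha$-singular; hence each of these $s_i$ components has at least two vertices, exactly $r_i$ of them have size two, and the remaining $s_i-r_i$ have size at least three. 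As these components are pairwise vertex-disjoint subsets of $V_i$,
\[
  2r_i+3(s_i-r_i)\le |V_i|,\qquad\text{i.e.}\qquad s_i\le\frac{|V_i|+r_i}{3}.
\]

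Next I would invoke \cref{prop:rank-bound} for $V_i$, which has maximum negative degree at most $\Delta$ and exactly $s_i$ $\alpha$-singular negative components. If $s_i\ge 1$, then $|V_i|\le d+s_i-1$, and combining with the previous display,
\[
  |V_i|\le d-1+\frac{|V_i|+r_i}{3}\quad\Longrightarrow\quad |V_i|\le\frac{3(d-1)}{2}+\frac{r_i}{2}.
\]
If instead $s_i=0$, then $r_i=0$ as well, and \cref{prop:rank-bound} gives $|V_i|\le d+O_{L,\Delta}(d/\log\log d)$; since $\tfrac{3(d-1)}{2}-d=\tfrac{d-3}{2}$ grows linearly in $d$ while the error term is $o(d)$, there is $d_0=d_0(L,\Delta)$ such that for all $d\ge d_0$ one has $|V_i|\le\tfrac{3(d-1)}{2}=\tfrac{3(d-1)}{2}+\tfrac{r_i}{2}$. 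In either case, for $d\ge d_0$,
\[
  |V_i|\le\frac{3(d-1)}{2}+\frac{r_i}{2}.
\]

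Summing over $i=1,\dots,p$ then yields $|C|=\sum_{i=1}^p|V_i|\le\frac{3p(d-1)}{2}+\frac{r_1+\cdots+r_p}{2}$, which is the assertion (with any $d_0=d_0(L,p,\Delta)\ge d_0(L,\Delta)$). There is no substantive obstacle: the entire case analysis is already packaged inside \cref{prop:rank-bound}, and the only point needing care is choosing $d_0$ large enough so that the sublinear error term arising in the $s_i=0$ (equivalently $r_i=0$) case is absorbed into the linear slack between $d$ and $\tfrac{3(d-1)}{2}$.
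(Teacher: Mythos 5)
Your proof is correct and takes essentially the same route as the paper: apply \cref{prop:rank-bound} to each $V_i$ separately, bound the number of $\alpha$-singular components by $(|V_i|+r_i)/3$, absorb the $O_{L,\Delta}(d/\log\log d)$ error term into the linear slack for $d$ large, and sum over $i$. Your case split on the total number $s_i$ of $\alpha$-singular components (rather than on $r_i$) is if anything slightly cleaner, since it matches the hypothesis of \cref{prop:rank-bound} exactly, and your observation that singletons are never $\alpha$-singular makes explicit a point the paper leaves implicit.
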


\begin{proof}
		 The number of components of $G_{V_i}^{-}$ that form  $\alpha$-singular codes of size at least three is at most $(|V_i| - 2r_i)/3$. Since $G_{V_i}^-$ has maximum degree at most $\Delta$, we can apply the rank bound, \cref{prop:rank-bound}, to see that
			\[
				|V_i| \leq \begin{cases}
				    d + r_i + \frac{|V_i| - 2r_i}{3} - 1 &\text{ if } r_i \neq 0, \\
                    d + O_{L,\Delta}\paren{\frac{d}{\log\log d}} &\text{ if } r_i = 0. \\
				\end{cases} 
			\]
		In particular, $|V_i| \leq \frac{3}{2}(d-1) + \frac{1}{2}r_i$ for all sufficiently large $d$. Summing up over all indices $ 1 \leq i \leq p$ concludes the proof.
\end{proof}

\section{Upper Bounds on Code Size}\label{sec:upper}

In this section we prove the upper bounds in the main results, \cref{thm:general-upper,thm:detail}. 

\begin{proof}[Proof of \cref{thm:general-upper}]
    By  \cref{cor:global}, there exists $\Delta = O_L(1)$ such that from any $L$-code we can remove at most $\Delta$ vectors so that the negative graph of the remaining vectors is a $\Delta$-modification of a complete $p$-partite graph with parts $V_1 \cup \cdots \cup V_p$. 

    Note that by \cref{prop:a-singular} there exists an $\alpha$-singular $L$-code if and only if $\lambda \geq 1$. Moreover, when $\lambda \geq 1$, the negative graph $G_{V_i}^-$ has at most $|V_i|/2$ connected components whose vertex sets form $\alpha$-modular codes. Therefore, by \cref{prop:rank-bound} applied to $V_i$ we see that for all sufficiently large $d$
    \[
        |V_i| \leq \begin{cases}
                    2d - 2 & \text{ if } \lambda \geq 1, \\
                    d + o(d) & \text{ if } \lambda < 1.
                    \end{cases}
    \]
    The result follows by summing over all $i \in [p]$ along with the $O_L(1)$ initially removed vertices.
\end{proof}

In the remainder of this section, we prove the upper bound of \cref{thm:detail}. Let us first make the following observation about how $\alpha$-singular codes of size two interact with each other.

\begin{lemma}\label{lem:a-singular-edges} Let $\alpha, \beta \in (0,1)$ and $L = [-1, -\beta] \cup \{\alpha\}$. Suppose $C$ is an $L$-code such whose negative graph is a perfect matching, consisting of edges edges $a_ia_i'$ for $i \in [r]$, with $r = \abs{C}/2$, and such that each $\set{a_i,a_i'}$ is an $\alpha$-singular $L$-codes of size two. Then, 
\[
    \frac{ a_i + a'_i}{2} =  \frac{ a_j + a'_j}{2} \quad \text{ for any} \quad  i, j \in [r].
\]
Moreover, writing their common midpoint as $ a = (a_i + a'_i) / 2$,
\[
    \left\{ \frac{ a_1 -  a}{\sqrt{1-\alpha}}, \cdots,  \frac{ a_r -  a}{\sqrt{1-\alpha}}\right\}
\]
is orthonormal and all orthogonal to $a$.   
\end{lemma}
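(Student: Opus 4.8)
The plan is to reduce everything to bookkeeping with inner products, the only external input being \cref{prop:a-singular}. First I would record all pairwise inner products. Since each $\set{a_i,a_i'}$ is an $\alpha$-singular $L$-code of size two, such a code exists, so $\lambda \ge 1$ and \cref{prop:a-singular} applies: the Gram matrix of $\set{a_i,a_i'}$ is the matrix $P$ there, hence $\ang{a_i,a_i'} = 2\alpha - 1$. On the other hand, the negative graph of $C$ is exactly the matching $\set{a_ia_i' : i \in [r]}$, so every other pair of distinct vectors has non-negative inner product; as $\alpha$ is the unique element of $L$ lying in $[0,\infty)$, we get $\ang{a_i,a_j} = \ang{a_i,a_j'} = \ang{a_i',a_j} = \ang{a_i',a_j'} = \alpha$ for all $i \ne j$.

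Next I would show the midpoints all coincide. Write $m_i = (a_i + a_i')/2$. Using $\norm{a_i}^2 = \norm{a_i'}^2 = 1$ and $\ang{a_i,a_i'} = 2\alpha - 1$ one computes $\norm{m_i}^2 = \tfrac14\big(1 + 2(2\alpha-1) + 1\big) = \alpha$, and using the cross inner products from the previous step, $\ang{m_i,m_j} = \tfrac14(4\alpha) = \alpha$ for $i \ne j$. Hence $\norm{m_i - m_j}^2 = \norm{m_i}^2 - 2\ang{m_i,m_j} + \norm{m_j}^2 = 0$, so $m_i = m_j$ for all $i,j$. Denote the common value by $a$; then $\norm{a}^2 = \alpha$ and $a_i - a = (a_i - a_i')/2$.

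Finally I would verify the orthonormality claims. From $\ang{a_i, a} = \ang{a_i, m_i} = \tfrac12\big(1 + (2\alpha-1)\big) = \alpha = \norm{a}^2$ we get $\ang{a_i - a, a} = 0$, so each $a_i - a$ is orthogonal to $a$. From $\norm{a_i - a_i'}^2 = 2 - 2(2\alpha - 1) + 2 = 4(1-\alpha)$ we get $\norm{a_i - a}^2 = 1 - \alpha$, so $(a_i - a)/\sqrt{1-\alpha}$ is a unit vector. For $i \ne j$, $\ang{a_i - a,\, a_j - a} = \ang{a_i,a_j} - \ang{a_i,a} - \ang{a,a_j} + \norm{a}^2 = \alpha - \alpha - \alpha + \alpha = 0$, so these vectors are pairwise orthogonal. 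This yields the claimed orthonormal set, all orthogonal to $a$.

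Every step here is a one-line Gram-matrix computation, so there is no substantive obstacle; the only points requiring care are invoking \cref{prop:a-singular} correctly (the hypothesis forces $\lambda \ge 1$, which is needed to identify the size-two $\alpha$-singular Gram matrix as $P$) and using that ``the negative graph is a perfect matching'' pins down every off-matching inner product to be exactly $\alpha$.
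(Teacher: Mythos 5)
Your proof is correct and takes essentially the same route as the paper: pin down all pairwise inner products ($\ang{a_i,a_i'}=2\alpha-1$ via the uniqueness part of \cref{prop:a-singular}, and $\alpha$ for every non-matched pair since $\alpha$ is the only nonnegative value in $L$), then deduce the common midpoint and the orthonormality by direct Gram-matrix computations. The only blemish is a harmless slip in the intermediate expression for $\norm{a_i-a_i'}^2$ (it should read $1-2(2\alpha-1)+1$, not $2-2(2\alpha-1)+2$), but the stated value $4(1-\alpha)$ and everything downstream are correct.
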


\begin{proof}   
    Since the only edges in $G_C^-$ are formed by $\alpha$-singular $L$-codes $\{ a_i, a'_i\}$, all pairwise inner products in $C$ are equal to $\alpha$ except $\langle  a_i, a'_i \rangle = 2\alpha -1 $.
    Hence, for any distinct $i, j \in [n]$ 
    \begin{align*}
        \|( a_i + a'_i) - ( a_j + a'_j)\|^2 &= \|a_i + a_i'\|^2 + 2 \ang{a_i + a_i', a_j + a_j'} + \|a_j + a_j'\|^2 \\
        &=\paren{2 + 2(2\alpha - 1)} + 2 \paren{-4\alpha} + \paren{2 + 2(2\alpha - 1)} = 0.
     \end{align*}
     This shows that the pairs $\{ a_i, a'_i\}$ have a common midpoint $ a$. Similarly, we can compute  
    \[
        \langle  a_i -  a,  a_j -  a\rangle = \begin{cases}
            1-\alpha &\text{ if } i = j, \\
            0 &\text{ if } i \neq j.
                                                 \end{cases}
    \]
	The conclusion follows immediately. 
\end{proof}

The next result proves \cref{thm:detail}\ref{itm:detail-1}\ref{itm:detail-1non} and \cref{thm:detail}\ref{itm:detail-2}\ref{itm:detail-2non}.

\begin{proposition}\label{prop:algebraic-upper}
    Let $ \alpha, \beta \in (0,1)$ and $\lambda = (1-\alpha)/(\alpha + \beta)$ and $L = [-1, -\beta]\cup\{\alpha\}$. Suppose $\alpha/\beta + 1 = p \in \ZZ$.  
    \begin{enumerate}[(a)]
        \item\label{itm:algebraic-upper-2} If $p = 2$ and $\lambda^2 \notin \ZZ$, then 
        \[
            N_L(d) \leq \frac{11d}{3} + O_L(1),
        \]
        \item\label{itm:algebraic-upper-p} If $p \geq 3$ and $\lambda \notin \ZZ$, then 
        \[
            N_L(d) \leq \left(2p-\frac{1}{2}\right)d + O_L(1).
        \]
    \end{enumerate}
    \end{proposition}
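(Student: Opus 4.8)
The plan is to run the global structure theorem, feed it into the rank bound to reduce matters to bounding the total number of size-two $\alpha$-singular negative components, and then to show that many such components spread over several parts force $\lambda$ (or $\lambda^2$) to be an integer. We may assume $\lambda\ge1$, since by \cref{prop:a-singular} otherwise no $\alpha$-singular $L$-code exists at all, so \cref{prop:rank-bound} applied to each part of the decomposition below forces every $|V_i|\le d+o(d)$ and hence $|C|\le pd+o(d)$, well within both claimed bounds.

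First I would apply \cref{cor:global}/\cref{thm:structure}: for a parameter $\varepsilon>0$ that we may take arbitrarily small, after deleting $O_\varepsilon(1)$ vectors we obtain $C=V_1\sqcup\cdots\sqcup V_q$ with $q\le p$, every $G_{V_i}^-$ of maximum degree $O_L(1)$, and all $V_i$--$V_j$ edges of weight within $\varepsilon$ of some $\gamma_{ij}\in L\setminus\{\alpha\}$ except for $O_\varepsilon(1)$ exceptional edges per vertex. By \cref{cor:rank-appl}, $|C|\le\tfrac{3q(d-1)}{2}+\tfrac12\sum_i r_i'$ where $r_i'$ is the number of size-two $\alpha$-singular components of $G_{V_i}^-$, so it suffices to prove $\sum_i r_i'\le\tfrac43 d+O_L(1)$ when $p=2$ and $\sum_i r_i'\le(p-1)d+O_L(1)$ when $p\ge3$; the latter is automatic unless $q=p$, which we assume. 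For each $i$, \cref{lem:a-singular-edges} applied to the union $V_i^s$ of the size-two $\alpha$-singular components gives a midpoint $a^{(i)}$ with $\snorm{a^{(i)}}^2=\alpha$ and an orthonormal set $S_i\perp a^{(i)}$ of size $r_i'$ with $V_i^s=\{a^{(i)}\pm\sqrt{1-\alpha}\,v:v\in S_i\}$; write $U_i=\operatorname{span}(S_i)$ and $W_i=U_i\oplus\RR a^{(i)}=\operatorname{span}(V_i^s)$, so $\dim W_i=r_i'+1$.

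The structural heart comes from comparing, for $i\ne j$ and $v\in S_i$, $w\in S_j$, the four code inner products $\langle a^{(i)}\pm\sqrt{1-\alpha}v,\ a^{(j)}\pm\sqrt{1-\alpha}w\rangle$: sign combinations isolate $\langle v,a^{(j)}\rangle$, $\langle a^{(i)},w\rangle$, $\langle v,w\rangle$, and since (off the exceptional set) all four lie within $\varepsilon$ of $\gamma_{ij}$, the first two are $O(\varepsilon)$ and $\langle a^{(i)},a^{(j)}\rangle$ is within $O(\varepsilon)$ of $\gamma_{ij}$; using that each of the four numbers lies in $L=[-1,-\beta]\cup\{\alpha\}$, whose only point above $-\beta$ is $\alpha$, forces $\langle v,w\rangle$ to within $O(\varepsilon)$ of $\{0,\pm(\alpha-\gamma_{ij})/(1-\alpha)\}$. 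Moreover the Gram matrix $\alpha I+(\gamma_{ij})$ of $a^{(1)},\dots,a^{(p)}$ is positive semidefinite while $\mathbf 1^\intercal\!\big(\alpha I+(\gamma_{ij})\big)\mathbf 1\le p\alpha-p(p-1)\beta+O(\varepsilon)=O(\varepsilon)$ (using $\alpha=(p-1)\beta$), so every $\gamma_{ij}$ is within $O(\varepsilon)$ of $-\beta$, and hence $\langle v,w\rangle\in\{0,\pm\lambda^{-1}\}$ up to $O(\varepsilon)$. Now the integrality step: on $U_i\cap U_j$ the systems $S_i,S_j$ induce a matrix that, up to $O(\varepsilon)$, is orthogonal with entries in $\{0,\pm\lambda^{-1}\}$, so a row with $m$ nonzero entries has $m\lambda^{-2}\approx1$; thus once $\dim(U_i\cap U_j)$ exceeds a constant depending on $L$, $\lambda^2$ is within $O(\varepsilon)$ of a positive integer. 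When $p\ge3$ there is in addition the composition relation (valid wherever $v$ and the target both lie in $\operatorname{span}(S_j)$), forcing an entry of size $\approx\lambda^{-1}$ to equal $\lambda^{-2}$ times an integer up to $O(\varepsilon)$, hence $\lambda$ itself within $O(\varepsilon)$ of a positive integer once a suitable triple overlap is large. Fixing $\varepsilon=\varepsilon(\alpha,\beta)$ small enough to contradict $\lambda^2\notin\ZZ$ (resp. $\lambda\notin\ZZ$): for $p=2$ this gives $\dim(U_1\cap U_2)=O_L(1)$, whence $\dim(W_1+W_2)\ge r_1'+r_2'-O_L(1)$ and $\sum_i r_i'\le d+O_L(1)$ since $W_1+W_2\subseteq\RR^d$; for $p\ge3$ it prevents three of the $U_i$ from (nearly) sharing a $(d-O_L(1))$-dimensional subspace, which --- together with the observation that two parts with $r_i'$ both within $O_L(1)$ of $d$ must have $U_i$ essentially equal (since $U_i^\perp\cap U_j^\perp$ must contain the independent vectors $a^{(i)},a^{(j)}$, and $\gamma_{ij}\approx-\beta\ne-\alpha$ rules out $a^{(i)}\parallel a^{(j)}$) --- limits to at most two the parts with $r_i'$ near $d$ and lets one bound the rest by a dimension count, yielding $\sum_i r_i'\le(p-1)d+O_L(1)$.

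The main obstacle is converting the approximate relations into the exact integrality statement: the structure theorem only pins cross-weights to within $\varepsilon$ and only off an $O_\varepsilon(1)$-per-vertex exceptional set, so all of ``$\langle v,w\rangle\in\{0,\pm\lambda^{-1}\}$'', ``the overlap matrices are orthogonal'', and the composition identity hold only up to $O(\varepsilon)$; one must control how $\varepsilon$, the exceptional degrees, and the $O_\varepsilon(1)$ vertex deletions propagate through the row-norm and composition estimates, then choose $\varepsilon$ as a function of $\operatorname{dist}(\lambda^2,\ZZ)$ (resp. $\operatorname{dist}(\lambda,\ZZ)$) --- which is where the (possibly $L$-dependent) $O_L(1)$ slack comes from. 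A secondary, and in the $p\ge3$ case quite delicate, point is the subspace bookkeeping bounding the contribution of parts of intermediate size, which is what extracts the precise constants $11/3$ and $2p-\tfrac12$ from \cref{cor:rank-appl}.
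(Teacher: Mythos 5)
Your overall scaffolding (reduce via \cref{cor:global} and \cref{cor:rank-appl} to bounding $\sum_i r_i$, then use \cref{lem:a-singular-edges} to extract midpoints and orthonormal systems) matches the paper, but the central ``integrality step'' has a genuine gap, and it is exactly the step where the hypotheses $\lambda\notin\ZZ$, $\lambda^2\notin\ZZ$ must enter. You argue that if $\dim(U_i\cap U_j)$ is large then $\lambda^2$ is within $O(\varepsilon)$ of a positive integer, via the claim that a row of the cross-Gram matrix with $m$ nonzero entries satisfies $m\lambda^{-2}\approx 1$. This fails for two independent reasons. First, the rows of the cross-Gram matrix of two orthonormal systems are unit vectors only if each vector of $S_i$ lies in $\operatorname{span}(S_j)$; a large intersection $U_i\cap U_j$ gives no such statement about any individual basis vector, so the row-norm identity is simply not available. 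Second, and more fundamentally, even in the exact setting a large intersection only says that $\lambda^2$ is an eigenvalue of the integer matrix $RR^\intercal$, i.e.\ an \emph{algebraic} integer; the hypothesis $\lambda^2\notin\ZZ$ does not exclude this (e.g.\ $\lambda^2$ irrational), so there is no contradiction to be had no matter how small you take $\varepsilon$. This is precisely why the paper does not prove $\dim(U_1\cap U_2)=O_L(1)$ (your claimed $r_1+r_2\le d+O_L(1)$ is stronger than what the paper establishes): instead it uses $\lambda^2\notin\ZZ\Rightarrow\lambda^2\notin\QQ$ for an eigenvalue of an integer matrix, so its minimal polynomial has degree at least $2$ and $\operatorname{mult}(\lambda^2,RR^\intercal)\le r_1/2$, yielding only $r_1+r_2\le 4d/3$ (and, for $p\ge3$, $\operatorname{mult}(\lambda,Q)\le(r_1+\cdots+r_p)/2$, so $\sum_i r_i\le 2d$). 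Your $p\ge3$ ``composition relation'' and the subsequent subspace bookkeeping inherit the same unsupported claim.

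There is also a quantitative obstruction to your approximate framework. The paper converts the approximate template into \emph{exact} algebra before any spectral argument: since $r_i>16p^2\Delta$, a pigeonhole argument finds one size-two component per part with all cross edges negative, and then $0\le\|\sum_i a_i\|^2\le p\alpha-p(p-1)\beta=0$ (using $\alpha=(p-1)\beta$ exactly) forces $\ang{a_i,a_j}=-\beta$ exactly, after which the midpoint identities pin every cross inner product among the matched pairs to exactly $-\beta$, $\alpha$, or $-(2\beta+\alpha)$, making $\lambda M=\lambda I-Q$ with $Q$ a genuine integer matrix. In your version the entries are only within $O(\varepsilon)$ of $\{0,\pm\lambda^{-1}\}$; entrywise errors in a $d\times d$ matrix degrade spectral conclusions like $d\varepsilon$ (and destroy integrality of the characteristic polynomial altogether), while $\varepsilon$ is only allowed to depend on $L$, so ``choose $\varepsilon$ smaller than $\operatorname{dist}(\lambda^2,\ZZ)$'' cannot close the argument. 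To repair the proof you need both missing ingredients: the exact-equality step above, and the minimal-polynomial multiplicity bound in place of the attempted ``$\lambda^2$ nearly an integer'' contradiction.
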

   
\begin{proof}
    By  \cref{cor:global}, there exists $\Delta = O_L(1)$ such that from any $L$-code, we can remove at most $\Delta$ vectors so that the associated negative graph of the remaining vectors is a $\Delta$-modification of the complete $p$-partite graph on $V_1 \cup \cdots \cup V_p$. 
    
    Let $U_i \subseteq V_i$ be the union of connected components of $G_{V_i}^-$ that form $\alpha$-singular $L$-codes of size two. 
    Then the negative graph on $U_i$ is a perfect matching with edges $\{a_{ij}a_{ij}' : 1 \le j \le r_i\}$. Each pair $\{ a_{ij}, a'_{ij}\}$ forms an $\alpha$-singular $L$-codes for all $1 \leq i \leq p$ and $1 \leq j \leq r_i$. We can apply \cref{cor:rank-appl} to $V_1 \cup \cdots \cup V_p$ to see that for all sufficiently large $d$,
    \begin{equation}\label{eq:r-bound}
        |V_1 \cup \cdots \cup V_p| \leq \frac{3p(d-1)}{2} + \frac{(r_1 + \cdots + r_p)}{2}. \tag{$\dagger$}
    \end{equation}
    Therefore, it suffices to bound $r_1 + \cdots + r_p$ for all sufficiently large $d$.

     By \cref{lem:a-singular-edges} for all $1 \leq i \leq p$ there exists $a_i$ such $a_i = (a_{ij} + a_{ij}')/2$ for all $1 \leq j \leq r_i$  and the set of vectors
\[
    \left\{ \frac{ a_{i1} -  a_i}{\sqrt{1-\alpha}}, \cdots,  \frac{ a_{ir_i} -  a_i}{\sqrt{1-\alpha}}\right\}
\]
    is orthonormal and span a space orthogonal to $a_i$. In particular  $r_i \leq d-1$.
    
    Note that if $r_{i} \leq 16p^2\Delta$ for any $1 \leq i \leq p$ then we have that
    \[
        r_1 + \cdots + r_p \leq (p-1)(d-1) + 16 p^2 \Delta
    \]
    in which case we get $N_L(d) \leq (4p - 1)d/2 + O_L(1)$ from \cref{eq:r-bound}, and so the conclusion follows as $p \geq 2$.

    Suppose $r_i > 16p^2\Delta$ for all $1 \leq i \leq p$. Since $U_i \subseteq V_i$, the negative graph on $U_1 \cup \cdots \cup U_p$ is a $\Delta$-modification of the complete $p$-partite graph. Therefore, by iteratively applying the pigeonhole principle, we can find a two-element $\alpha$-singular component $s_i \subseteq U_i$ for each $i \in [p]$, such that for all $1 \leq i< j\leq p$, any edge with one endpoint in $s_i$ and other endpoint in $s_j$ is negative. Thus, $\ang{a_i, a_j} \leq -\beta$. We also have that $\ang{a_i, a_i} = \alpha$ (see \cref{prop:a-singular}). Therefore,
    \begin{align*}
        \left\|\sum_{i = 1}^{p}a_i\right\|^2 &= \sum_{j = 1}^{p}\ang{a_i,a_i} + 2\sum_{1 \leq i < j \leq p} \ang{a_i,a_j} \leq  \sum_{j = 1}^{p}\alpha + 2\sum_{1 \leq i < j \leq p} (-\beta)= p\alpha-p(p-1)\beta = 0.
    \end{align*}
    So the equality must hold at the inequality step. That is  $\ang{a_i, a_j} = -\beta$ for all $1 \leq i < j \leq p$. This allows us to exactly determine allowed inner products in $U_1 \cup \cdots \cup U_p$.  Suppose $1 \leq i < k \leq p$ and $j \in [r_i]$ and $t \in [r_k]$. Then 
    \begin{enumerate}
        \item Suppose all inner products between $\{a_{ij}, a_{ij}'\}$ and $\{a_{kt}, a_{kt}'\}$ are negative. Then
        \[
            -4\beta = \ang{2a_i, 2a_k} = \ang{a_{ij} + a_{ij}',a_{kt} + a_{kt}'} \leq -4\beta.
        \]
        Therefore, all inner products are equal to $-\beta$.
        \item Suppose there is at least one positive inner product between $\{a_{ij}, a_{ij}'\}$ and $\{a_{kt}, a_{kt}'\}$. Without loss of generality, assume that $\ang{a_{ij},a_{kt}} = \alpha$. 

        Note that by the pigeonhole principle, there exists $s \in [r_k]$ such that all inner products between $\{a_{ij}, a_{ij}'\}$ and $\{a_{ks}, a_{ks}'\}$ are negative. Hence
        \[
            \ang{a_{ij}, 2a_k} =   \ang{a_{ij}, a_{ks}+a_{ks}'} = -2\beta.
        \]
        Analogously, we have that $\ang{a_{kt},2a_i} = -2\beta$. Therefore, we have that
        \begin{align*}
            \ang{a_{ij}, a_{kt}'} &=  \ang{a_{ij}, 2a_k - a_{kt}} = -(2\beta + \alpha), \\
            \ang{a_{ij}', a_{kt}} &=  \ang{2a_i - a_{ij}, a_{kt}} = -(2\beta + \alpha), \text{ and} \\
            \ang{a_{ij}', a_{kt}'} &=  \ang{2a_i - a_{ij}, 2a_k - a_{kt}} = -4\beta + 4\beta + \alpha = \alpha.
        \end{align*}
        Hence, the inner products between $\{a_{ij}, a_{ij}'\}$ and $\{a_{kt}, a_{kt}'\}$ are in $\{\alpha, -(2\beta + \alpha)\}$.
    \end{enumerate}
    For $1 \leq i \leq p$ and $1 \leq j \leq r_i$ take $u_{ij} = (a_{ij} - a_i)/\sqrt{1-\alpha}$. By \cref{lem:a-singular-edges} we know that for all $i \in [p]$ the set of vectors $\{u_{ij} : 1 \le j \le r_i\}$ is orthonormal. Furthermore, by the observation above, whenever $1 \leq i < k \leq p$ and $j \in [r_i]$ and $t \in [r_k]$ we have that
    \[
        \ang{u_{ij}, u_{kt}} = \frac{\ang{a_{ij}-a_i, a_{kt}-a_k}}{1-\alpha} = \frac{\ang{a_{ij}, a_{kt}} + \beta}{1-\alpha} \in \{\pm \lambda^{-1}, 0\}.
    \]
    Let $M$ be the Gram matrix of vectors $\{u_{ij} : i \in [p], \, j \in [r_i]\}$, which has block form
    \[
        M = \begin{pmatrix}
            I_{r_1} & \lambda^{-1}R_{12} & \cdots & \lambda^{-1} R_{1p} \\
            \lambda^{-1}R_{12}^\intercal & I_{r_2} & \cdots & \lambda^{-1} R_{2p} \\
            \vdots & \vdots & \ddots & \vdots \\
            \lambda^{-1}R_{1p}^\intercal & \lambda^{-1}R_{2p}^\intercal & \cdots & I_{r_p} \\
             \end{pmatrix},
    \]
    where $R_{ik} \in \{0,\pm 1\}^{r_i \times r_k}$ for $1 \leq i < k \leq p$. Let $\lambda M = \lambda I - Q$ where $Q$ is an integer matrix. If $\lambda$ is not an eigenvalue of $Q$ then $M$ is full rank. Thus, $d \geq\mathrm{rk}(M)= r_1 + \cdots + r_p$, so from \cref{eq:r-bound} we get $N_L(d) \leq (3p+1)d/2 + O_L(1)$. The conclusion follows as $p \geq 2$.

   We now assume that $\lambda$ is an eigenvalue of $Q$ and split into two cases based on whether $p \geq 3$.

    \textit{Case 1.} Suppose $p \geq 3$ and $\lambda \notin \ZZ$. Since $Q$ is an integer matrix, its characteristic polynomial is a monic integer polynomial with root $\lambda$. Hence, $\lambda \notin \QQ$, and so its minimal polynomial is at least quadratic. Thus the degree of the characteristic polynomial of $Q$ is at least twice the multiplicity of $\lambda$. Hence,
    \[
        d \geq \mathrm{rk}(M) = r_1 + \cdots + r_p - \mathrm{mult} \left(\lambda, Q\right) \geq \frac{r_1 + \cdots + r_p}{2}
    \]
    In particular $r_1 + \cdots + r_p \leq 2d$. Thus $N_L(d) \leq (3d + 2)d/2 + O_L(1)$ from \cref{eq:r-bound}, so the conclusion follows as $p \geq 3$.
    
    \textit{Case 2.} Suppose $p = 2$ and $\lambda^2 \notin \ZZ$. Let $R = -R_{12} \in \ZZ^{r_1 \times r_2}$. Then
    \[
        Q = \begin{pmatrix}
            0 & R \\
            R^\intercal & 0 \\
        \end{pmatrix}.
    \]
    We first show that $\mathrm{mult}(\lambda, Q) = \mathrm{mult}(\lambda^2, RR^\intercal)$. By a straightforward matrix arithmetics we can check that the following identity holds for all $t \neq 0$
    \[
         \begin{pmatrix}
            tI & -R \\
            -R^\intercal & tI \\
        \end{pmatrix} 
        \begin{pmatrix}
            tI & 0 \\
            R^\intercal & tI \\
        \end{pmatrix} 
        =\begin{pmatrix}
            I & -t^{-1}R \\
            0 & I \\
        \end{pmatrix} 
        \begin{pmatrix}
            t^2I - RR^\intercal & 0 \\
            0 & t^2I \\
        \end{pmatrix}.
    \]
    Taking the determinant of both sides, we get $t^{r_1 + r_2}\det \paren{t I - Q} = t^{2r_2}\det \paren{t^2 I - RR^\intercal}$ for all $t \neq 0$. Since $\lambda \neq 0$, we conclude that $\mathrm{mult}(\lambda, Q) = \mathrm{mult}(\lambda^2, RR^\intercal)$. Thus,
    \[
        d \geq \mathrm{rk}(\lambda M) = \mathrm{rk}(\lambda I - Q) = r_1 + r_2 - \mathrm{mult}(\lambda, Q) \geq r_1 + r_2 - \mathrm{mult}(\lambda^2, RR^\intercal).
    \]    
    Since $RR^\intercal \in \ZZ^{r_1 \times r_1}$ and $\lambda^2 \notin \ZZ$, we have $\mathrm{mult}(\lambda^2, RR^\intercal) \leq r_1/2$. Hence, $d \geq r_1 + r_2/2$.
    The same argument shows that $d \geq r_1/2 + r_2$. In particular, $ r_1 + r_2 \leq 4d/3$, so we get $N_L(d) \leq 11d/3 + O_L(1)$ from \cref{eq:r-bound}.
\end{proof}

The next result proves the upper bound in \cref{thm:detail}\ref{itm:detail-1}\ref{itm:detail-1int}.

\begin{proposition}\label{prop:upper}
    Let $ \alpha \in (0,1)$ and $L = [-1,-\alpha]\cup\{\alpha\}$. Then, there exists $d_0 = d_0(\alpha)$ such that for all $d \geq d_0$,
    \[
        N_L(d) \leq 4(d-1).
    \]
\end{proposition}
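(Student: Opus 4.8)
The plan is to combine the global structure theorem with the rank bound, cap the size of each of the two parts at $2(d-1)$, and then remove the additive $O_L(1)$ slack coming from vertex deletion by a rigidity argument. Since $\alpha=\beta$ we have $p=2$ and $\lambda=\nu=(1-\alpha)/(2\alpha)$. First I would apply \cref{cor:global}: from any $L$-code one can delete a set $R$ of $O_L(1)$ vectors so that the remainder is $V_1\sqcup V_2$ with each $V_i$ of maximum negative degree $\le\Delta=O_L(1)$ and almost every edge between $V_1$ and $V_2$ negative. Inside $V_i$ let $U_i$ be the union of those connected components of the negative graph that are $\alpha$-singular $L$-codes of size two, and set $r_i=|U_i|/2$. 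The rank bound \cref{prop:rank-bound} gives $|V_i|\le d-1+r_i$ (the case $r_i=0$ being handled by its first branch and harmless for large $d$), while trivially $2r_i\le|V_i|$; combining these two inequalities already yields $|V_i|\le 2(d-1)$, hence $N_L(d)=|C|\le|V_1|+|V_2|+|R|\le 4(d-1)+O_L(1)$.

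To upgrade this to the exact bound I would argue by contradiction, assuming $|C|\ge 4(d-1)+1$ for $d$ large, so that $R\ne\emptyset$ and $r_1,r_2\ge d-O_L(1)$. Using the largeness of the $r_i$, an iterated pigeonhole argument (as in the proof of \cref{prop:algebraic-upper}) produces $\alpha$-singular pairs $s_i\subseteq U_i$ with all four cross-edges negative; by \cref{lem:a-singular-edges} the pairs of $U_i$ share a common midpoint $a_i$ with $\|a_i\|^2=\alpha$ (see \cref{prop:a-singular}), and since $\langle a_1,a_2\rangle$ is an average of negative inner products, $\|a_1+a_2\|^2=2\alpha+2\langle a_1,a_2\rangle\le 0$, forcing $a_2=-a_1$. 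Consequently all normalized difference vectors $u_{ij}=(a_{ij}-a_i)/\sqrt{1-\alpha}$ (over all pairs of $U_1\cup U_2$) are unit vectors in the $(d-1)$-dimensional space $a_1^\perp$, orthonormal within each part, with cross inner products in $\{0,\pm\lambda^{-1}\}$ exactly as computed in \cref{prop:algebraic-upper}; the Gram matrix of the full family is $\lambda^{-1}(\lambda I-Q)$ for an integer block–anti-diagonal matrix $Q$. If $\lambda^2\notin\ZZ$ this configuration is already impossible for large $d$, since \cref{prop:algebraic-upper}\ref{itm:algebraic-upper-2} gives $N_L(d)\le 11d/3<4(d-1)$; so we may assume $\lambda^2\in\ZZ$, in which case $\operatorname{mult}(\lambda,Q)\le\min(r_1,r_2)$ and the span of the $u_{ij}$ has dimension $\ge\max(r_1,r_2)\ge d-O_L(1)$.

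Finally I would show no vector $v\in R$ can exist, which I expect to be the crux. Writing $v=c\,a_1+w$ with $w\perp a_1$, for each pair $\{a_{ij},a'_{ij}\}$ the two inner products $\langle v,a_{ij}\rangle=\pm c\alpha\pm\sqrt{1-\alpha}\langle w,u_{ij}\rangle$ lie in $L$ and sum to $\pm 2c\alpha$; a case analysis on the possible unordered pairs of values $\{\alpha,\alpha\}$, $\{\alpha,\le-\alpha\}$, $\{\le-\alpha,\le-\alpha\}$, combined with the relation $a_2=-a_1$ and the near-fullness of the two orthonormal systems, forces either that $v$ is a scalar multiple of $a_1$ — impossible, as then either $\|v\|\ne1$ or $v$ realizes the forbidden value $\sqrt\alpha$ against $U_2$ — or that $|\langle w,u_{ij}\rangle|$ equals a fixed positive constant $\rho=\Omega_L(1)$ for all pairs of one part, whence $\|w\|^2\ge r_i\rho^2$ forces $r_i=O_L(1)$, contradicting $r_i\ge d-O_L(1)$. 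This contradiction yields $N_L(d)\le 4(d-1)$. The genuine difficulty lies entirely in this last elimination step — the structure theorem controls the code only up to $O_L(1)$ vertices, and ruling out stray vertices requires carefully exploiting the exact Gram matrix $P$ of the $\alpha$-singular pair together with the orthonormality in $a_i^\perp$, and organizing the bookkeeping when $r_i$ is merely close to $d-1$ and when a few edges between $U_1$ and $U_2$ or a few vertices of $V_i\setminus U_i$ deviate from the ideal template.
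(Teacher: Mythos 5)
Your stage-1 outline (structure theorem, rank bound giving $|V_i|\le 2(d-1)$ per part, forcing $r_1,r_2\ge d-O_L(1)$, the identity $a_2=-a_1$, and the computation of the cross inner products) matches the paper's strategy, but the final elimination step, which you yourself identify as the crux, has a genuine gap: the claim that ``no vector $v\in R$ can exist'' is not provable, and your claimed dichotomy (either $v$ is a scalar multiple of $a_1$, or $|\langle w,u_{ij}\rangle|=\rho=\Omega_L(1)$ for all pairs of one part) omits the main consistent case. Take $c=1$ and $w$ orthogonal to $a_1$ and to every $u_{ij}$ with $\|w\|^2=1-\alpha$: then $\langle v,a_{1j}\rangle=\langle v,a_{1j}'\rangle=\alpha$ and $\langle v,a_{2j}\rangle=\langle v,a_{2j}'\rangle=-\alpha$ for all $j$, every constraint is satisfied, and $v$ is neither a multiple of $a_1$ nor has large projections onto the $u_{ij}$. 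Such a $v$ is simply a vector that behaves like a member of $V_1$ (similarly $c=-1$ for $V_2$, possibly with $O_L(1)$ deviating edges), and vectors of exactly this kind are present in the extremal constructions; the structure theorem gives no guarantee that the removed set $R$ avoids them, so they cannot be ruled out --- they must instead be reabsorbed into the appropriate part and counted there. Your own case analysis, done carefully, yields exactly this trichotomy: $c=\pm1$ (absorbable), $c=0$ (killed by your norm argument since $r_i\ge d-O_L(1)$), and everything else impossible; but without the reabsorption step the proof as written does not close, since after absorbing you must recheck that the enlarged parts still have $O_L(1)$ maximum negative degree and reapply the rank bound to them to recover $|C|\le 2(d-1)+2(d-1)$.

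This is precisely how the paper proceeds: it first uses \cref{lem:positive-clique} to classify each removed vertex by its sign pattern and moves into $V_1$ (resp.\ $V_2$) every vertex with $O_\alpha(1)$ negative edges to $V_1$ and $O_\alpha(1)$ positive edges to $V_2$ (resp.\ swapped), so that every vertex remaining in $J$ has a uniform sign pattern toward $V_1\cup V_2$; only then does it run the counting (via \cref{cor:rank-appl} and $r_i\le d-1$) and eliminate the leftover vertices by pigeonhole, using $\langle v,\,a_{1i}+a_{1i}'+a_{2j}+a_{2j}'\rangle=\langle v,2a_1+2a_2\rangle=0$ against four same-sign inner products. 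Two smaller points: your inequality ``\cref{prop:rank-bound} gives $|V_i|\le d-1+r_i$'' should be stated with $r_i$ the number of \emph{all} $\alpha$-singular components (not only those of size two); the conclusion $|V_i|\le 2(d-1)$ survives since every $\alpha$-singular component has at least two vertices. And the detour through $\lambda^2\notin\ZZ$ versus $\lambda^2\in\ZZ$ and $\operatorname{mult}(\lambda,Q)$ is unnecessary for this proposition --- the paper's argument (and a corrected version of yours) gives $4(d-1)$ for every $\alpha$ without that case split.
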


\begin{proof}
 Let $C$ be a spherical $L$-code in $\RR^d$. By the global structure theorem, \cref{cor:global}, there exists $J \subseteq C$ with $\abs{J} = O_\alpha(1)$ such that the associated negative graph of $C \setminus J$ is a $O_\alpha(1)$-modification of complete bipartite graph. Let $C \setminus J = V_1 \cup V_2$ be the bipartition.

By \cref{lem:positive-clique} for $i \in \{1,2\}$, each vertex $v \in J$ has either $O_\alpha(1)$ positive or $O_\alpha(1)$ negative edges to $V_i$. If $v$ has $O_\alpha(1)$ positive edges to $V_2$ and $O_\alpha(1)$ negative edges to $V_1$, then we can remove $v$ from $J$ and place it in $V_1$. Likewise with $V_1$ and $V_2$ swapped.
After removing such vertices out of $J$, every remaining $v \in J$ has either at most $\Delta$ positive edges to $V_1 \cup V_2$, or at most $\Delta$ negative edges to $V_1 \cup V_2$, for some constant $\Delta = \Delta(\alpha)$.

Let $r_i$ be the number of connected components of $G_{V_i}^-$ that form modular $L$-codes of size two. By \cref{cor:rank-appl}, for all sufficiently large $d$,
\[
    |V_1 \cup V_2| \leq 3(d-1) + \frac{r_1 + r_2}{2}.
\]
On the other hand, by \cref{lem:a-singular-edges} we have that $r_i \leq d-1$ and so $|V_1 \cup V_2| \leq 4(d-1)$. Hence, it suffices to show $J = \varnothing$.

Observe that if $r_1 \leq 4\Delta$ or $r_2 \leq 4\Delta$ then $|V_1 \cup V_2| \leq 3.5d + O_L(1)$, and so $\abs{C} \le 3.5d + O_L(1) \le 4(d-1)$ for sufficiently large $d$.
Hence, assume that $r_1 > 4\Delta$ and $r_2 > 4 \Delta$. Let $U_i \subseteq V_i$ be the subsets formed by the union of all connected components of $G_{V_i}^-$ that form $\alpha$-singular $L$-codes of size two. Suppose 
 \[
        U_i = \{a_{i1}, a'_{i1}\}\cup \cdots \cup \{a_{ir_i}, a'_{ir_i}\}.
\]  
We can apply \cref{lem:a-singular-edges} to $U_i$ and take $a_i =(a_{i1} + a'_{i1})/2$. By the pigeonhole principle, there exists $1 \leq j \leq r_2$ such that all four edges with one endpoint in $\{a_{11}, a'_{11}\}$ and the other in $\{a_{2j}, a'_{2j}\}$ are negative. Hence,
    \begin{align*}
        \| ( a_{11} + a'_{11}) + (a_{2j} + a'_{2j}) \|^2 &= \|a_{11} + a'_{11}\|^2 + \|a_{2j} + a'_{2j}\|+2\ang{   a_{11} + a'_{11}, a_{2j} + a'_{2j}}\\
        &= 8\alpha+2\ang{   a_{11} + a'_{11}, a_{2j} + a'_{2j}} \\
        &\leq 8\alpha - 8\alpha \leq 0.
    \end{align*}
    In particular that $a_1 + a_2 = 0$. 
    
    For each $ v \in J$, by the the pigeonhole principle there exist $i \in [r_1]$ and $j \in [r_2]$ such that all four inner products
\[
    \ang{v, a_{1i}}, \quad
    \ang{v, a'_{1i}}, \quad
    \ang{v, a_{2j}}, \quad
    \ang{v, a'_{2j}}
\]
all have the same nonzero sign. However, this is a contradiction as
\[
    \ang {v, a_{1i} + a'_{1i} + a_{2j} + a'_{2j} } = \ang {v, 2a_1+2a_2 } = 0.
\]
Thus, $J = \varnothing$. Hence, $|C| = |V_1| + |V_2| \leq 4(d-1)$. 
\end{proof}

\bibliographystyle{amsplain0}
\bibliography{bibliography}

\end{document}